\tikzset{elegant/.style={smooth,thick,samples=50,cyan}}
\newtheorem{theorem}{Theorem}[section]
\newtheorem{defn}{Definition}[section]
\newtheorem{prop}{Proposition}[section]
\newtheorem{lemma}{Lemma}[section]
\newtheorem{coro}{Corollary}[section]
\newtheorem{remark}{Remark}[section]
\newcommand{\ml}{\mathcal}
\newcommand{\mb}{\mathbb}
\DeclareMathOperator{\lin}{lin}
\DeclareMathOperator{\non}{non}
\DeclareMathOperator{\intt}{int}
\DeclareMathOperator{\extt}{ext}
\DeclareMathOperator{\bdd}{bdd}
\DeclareMathOperator{\divv}{div}
\title{The influence of viscous dissipations on the nonlinear acoustic wave equation with second sound}
\author[1]{Wenhui Chen\thanks{Wenhui Chen (wenhui.chen.math@gmail.com)}}
\affil[1]{School of Mathematics and Information Science, Guangzhou University, 510006 Guangzhou, China}
\author[2]{Yan Liu\thanks{Yan Liu (ly801221@163.com)}}
\affil[2]{Department of Applied Mathematics, Guangdong University of Finance, 510521 Guangzhou, China}
\author[3]{Alessandro Palmieri\thanks{Alessandro Palmieri (alessandro.palmieri.math@gmail.com)}}
\affil[3]{Department of Mathematics, University of Bari, Via E. Orabona 4, Bari 70125, Italy}
\author[4]{Xulong Qin\thanks{Xulong Qin (qinxul@mail.sysu.edu.cn)}}
\affil[4]{Department of Mathematics, Sun Yat-Sen University, 510275 Guangzhou, China}
\date{}
\begin{document}

\maketitle

\begin{abstract}
	\medskip
We study the effect of a viscous dissipation on the Cauchy problem for a Cattaneo-type model in nonlinear acoustics, established by applying the Lighthill approximation for the viscous or inviscid fluid model. The contribution of this paper is twofold. For the nonlinear viscous Cattaneo-type model involving a fractional Laplacian $(-\Delta)^{\alpha}$ in the viscous damping with $\alpha\in[0,1]$, we derive optimal decay rates for global (in time) solutions with small data in certain Sobolev spaces. Furthermore, by introducing a threshold $\alpha=1/2$ for the power of the fractional viscous dissipation, we derive an anomalous diffusion profile when $\alpha\in[0,1/2)$ and a diffusion wave profile when $\alpha\in[1/2,1]$ for large-time. Whereas, for the nonlinear inviscid Cattaneo-type model (or the Jordan-Moore-Gibson-Thompson equation in the critical case), we obtain the blow-up of the energy solutions in finite time under  suitable assumptions for the initial data. Thus, the presence of a viscous dissipation in the nonlinear Cattaneo-type model is a criterion for the global (in time) existence and blow-up of solutions.
 \\
	
	\noindent\textbf{Keywords:} nonlinear acoustics, viscous dissipations, fractional Laplacians, optimal decay rates, asymptotic profiles, blow-up in finite time.\\
	
	\noindent\textbf{AMS Classification (2020)}  35L05, 35R11, 35A01, 35B40, 35B44.

\end{abstract}
\fontsize{12}{15}
\selectfont
\section{Introduction}
In the classical  theory of acoustic  waves, in order to describe the propagation of sound in thermoviscous fluids, some mathematical models for the classification of nonlinear wave equations with   dissipations have been considered in the study of nonlinear acoustics, e.g. the Kuznetsov equation, the Westervelt equation and the Blackstock model. We address the interested reader  to the reviews and introductions in \cite{Kaltenbacher=2015,Kaltenbacher-Thalhammer-2018}. These models have been widely used in medical and industrial applications of high-intensity ultra sound, for instance, medical imaging and therapy, ultrasound cleaning and welding (see \cite{Abramov-1999,Dreyer-Krauss-Bauer-Ried-2000,Kaltenbacher-Landes-Hoffelner-Simkovics-2002} and references therein).
\subsection{Fundamental models in nonlinear acoustics}
In thermoviscous fluids, the PDEs of nonlinear acoustics are governed by the conservations of mass, momentum and energy, supplemented with the heuristic equation of state for perfect gases. We recall the well-known fully compressible Navier-Stokes equations associated with suitable heat conductions as follows (cf. \cite{BCD=2011,L=1998,T}):
\begin{align}\label{CNS}
\begin{cases}
\displaystyle{\rho_t+\divv (\rho\mathbf{u})=0,}\\
\displaystyle{(\rho\mathbf{u})_t+\mathbf{u}\divv (\rho\mathbf{u})+\rho(\mathbf{u}\cdot\nabla)\mathbf{u}+\nabla p-\mu_{\mathrm{V}}\Delta\mathbf{u}-\left(\mu_{\mathrm{B}}+\frac{1}{3}\mu_{\mathrm{V}}\right)\nabla\divv\mathbf{u}=0,}\\
\displaystyle{\rho\left(c_{\mathrm{V}}\Theta_t+c_{\mathrm{V}}\mathbf{u}\cdot\nabla \Theta+\frac{c_{\mathrm{P}}-c_{\mathrm{V}}}{\alpha_{\mathrm{V}}}\divv\mathbf{u}\right)}\\
\displaystyle{\qquad\qquad=-\divv \mathbf{q}+\left(\mu_{\mathrm{B}}-\frac{2}{3}\mu_{\mathrm{V}}\right)|\divv\mathbf{u}|^2+\frac{1}{2}\mu_{\mathrm{V}}\left\|\nabla\mathbf{u}+(\nabla\mathbf{u})^{\mathrm{T}}\right\|_{F}^2,}\\
\displaystyle{p=R\rho\, \Theta,}
\end{cases}
\end{align}
with the Frobenius norm $\|\cdot\|_F$, where the physical variables and quantities are introduced in the next tables. Later, the equation of the conduction for the heat flux $\mathbf{q}$ will be stated (cf. Section \ref{Subsection background}).
\medskip

\begin{minipage}{\textwidth}
	\begin{minipage}[t]{0.48\textwidth}
		\makeatletter\def\@captype{table}
		\begin{tabular}{ll} 
			\toprule
			Physical variable & Notation   \\
			\midrule 
			Mass density & $\rho$\\
			Acoustic particle velocity & $\mathbf{u}$\\
			Acoustic velocity potential & $\psi$\\
			Acoustic pressure & $p$\\
			Absolute temperature & $\Theta$\\
			Heat flux & $\mathbf{q}$\\
			\bottomrule
		\end{tabular}
		\caption{Notations for the physical variables}
		\label{Table_1}
	\end{minipage}
	\begin{minipage}[t]{0.48\textwidth}
		\makeatletter\def\@captype{table}
		\begin{tabular}{ll} 
			\toprule
			Physical quantity & Notation   \\
			\midrule 
			Bulk/Shear viscosity  & $\mu_{\mathrm{B}}$, $\mu_{\mathrm{V}}$\\
			Kinematic viscosity & $\nu$\\
					Viscosity number & $b$\\
			Specific heat (volume/pressure) & $c_{\mathrm{V}}$, $c_{\mathrm{P}}$\\
			Thermal expansion coefficient &$\alpha_{\mathrm{V}}$\\
			Thermal conductivity & $\kappa$\\
			Thermal relaxation & $\tau$\\
			Speed of sound & $c_0$\\
			Specific gas constant & $R$\\
			Parameters of nonlinearities & $A$, $B$\\
			\bottomrule
		\end{tabular}
		\caption{Notations for the physical quantities}
		\label{Table_2}
	\end{minipage}	
\end{minipage}
\medskip

\noindent Then, we will apply the Lighthill approximation \cite{Lighthill-1956,Blackstock-1963} to the nonlinear system \eqref{CNS}, where we only retain the terms of first- and second-order with small perturbations around the equilibrium state. Considering irrotational flows $\nabla\times\mathbf{u}=0$, which indicates $\mathbf{u}=-\nabla\psi$, several mathematical models in nonlinear acoustics have been deduced over the last century. One may regard nonlinear viscous acoustic wave models as approximations of the fully compressible Navier-Stokes equations  under irrotational flows. Additionally, for inviscid fluids, the nonlinear system \eqref{CNS} with $\mu_{\mathrm{B}}=\mu_{\mathrm{V}}=0$  leads to nonlinear inviscid acoustic wave models by some approximations of the fully compressible Euler equations under irrotational flows.

\subsubsection{Background of nonlinear viscous acoustic wave models} \label{Subsection background}
For the sake of brevity, we introduce the  following notation for the nonlinearity
\begin{align}\label{N0}
	\ml{N}_{\psi}(t,x):=\frac{B}{2Ac_0^2}|\psi_t(t,x)|^2+|\nabla\psi(t,x)|^2.
\end{align} 
 By combining Fourier's law for the heat conduction
\begin{align}\label{Fourier-Law}
\mathbf{q}=-\kappa\nabla \Theta,
\end{align}
with the compressible Navier-Stokes-Fourier equations \eqref{CNS}, and applying the Lighthill scheme, we may derive the well-known Kuznetsov equation (see \cite{Kuznetsov-1971,Mizohata-Ukai=1993,Fritz-Nikolic-Wohlmuth=2018,Kaltenbacher-Nikolic=2022} and references therein)
\begin{align}\label{Kuznetsov-Eq}
\psi_{tt}-c_0^2\Delta\psi-b\nu\Delta \psi_t=\partial_t\ml{N}_{\psi},
\end{align}
where the thermal conductivity was assumed to be small in comparison with the size of small perturbations in the Lighthill approximation. 
From the Kuznetsov equation \eqref{Kuznetsov-Eq} in classical nonlinear acoustics, an infinite signal speed paradox occurs due to the application of Fourier's law \eqref{Fourier-Law}. It seems to be unrealistic  in the acoustic waves propagation.  To eliminate this drawback, motivated by the second sound theory \cite{Chandrasekharaiah}, one replaces Fourier's law \eqref{Fourier-Law} by Cattaneo's law
\begin{align}\label{Cattaneo-Law}
\tau\mathbf{q}_t+\mathbf{q}=-\kappa\nabla \Theta.
\end{align}
 Because the presence of a finite propagation speed for acoustic waves, the models derived by using Cattaneo's law are  significant and well-accepted. A remarkable example of these models is given by the Jordan-Moore-Gibson-Thompson (JMGT) equation (see \cite{Kaltenbacher-Lasiecka-Marchand-2011,Kaltenbacher-Lasiecka-Pos-2012}).

For the above reasons, in the recent work \cite{Liu-Qin-Zhang=2022} (more precisely, see \cite[Section 2]{Liu-Qin-Zhang=2022} with the small thermal conductivity $\varepsilon^2\kappa$ carrying the size $\varepsilon$ of small perturbations) it is derived the following model: 
\begin{align}\label{Eq-New-C}
(\tau\partial_t+\ml{I})(\psi_{tt}-c_0^2\Delta\psi-b\nu\Delta \psi_t)=\partial_t\ml{N}_{\psi},
\end{align}
where $\ml{I}$ denotes the identity operator. Roughly speaking, \eqref{Eq-New-C} is the counterpart of \eqref{Kuznetsov-Eq} in the theory of hyperbolic heat counduction. Clearly, the Cattaneo-type model \eqref{Eq-New-C} is reduced to the Fourier-type model \eqref{Kuznetsov-Eq} as $\tau=0$.
By a Cattaneo-type model, we mean a model in nonlinear acoustics which is derived by using Cattaneo's law of heat conduction in place of Fourier's law. This viscous Cattaneo-type model with $b\nu>0$ is obtained by the Lighthill approximation of the compressible Navier-Stokes-Cattaneo equations \eqref{CNS} combined with the heat conduction \eqref{Cattaneo-Law}. Note that the viscous dissipation $-b\nu\Delta\psi_t$ in the nonlinear acoustic wave equations \eqref{Kuznetsov-Eq} and \eqref{Eq-New-C} only comes from the viscosity in the Navier-Stokes equation \eqref{CNS}$_2$ such that
\begin{align}\label{Viscosity}
-\mu_{\mathrm{V}}\Delta\mathbf{u}-\left(\mu_{\mathrm{B}}+\frac{1}{3}\mu_{\mathrm{V}}\right)\nabla\divv\mathbf{u}=-\left(\mu_{\mathrm{B}}+\frac{4}{3}\mu_{\mathrm{V}}\right)\Delta\mathbf{u}=-b\nu\Delta\mathbf{u}
\end{align}
due to the irrotational condition $\nabla\times\mathbf{u}=0$. A change in the viscosity in the Navier-Stokes equations produces in turn a modification in the viscous dissipation of the nonlinear acoustics model \eqref{Eq-New-C}.

\subsubsection{Background of nonlinear inviscid acoustic wave models}
Let us turn to the inviscid case in nonlinear acoustics. By employing the Lighthill approximation to the nonlinear inviscid system \eqref{CNS} with the Cattaneo's law \eqref{Cattaneo-Law}, we may establish the well-known inviscid Cattaneo-type model (aslo known as the  JMGT equation in the critical case cf. \cite{Kaltenbacher-Lasiecka-Marchand-2011,Dell-Lasiecka-Pata,Nikolic-Said=2021} and references therein)
\begin{align}\label{Inviscid-JMGT}
(\tau\partial_t+\ml{I})(\psi_{tt}-c_0^2\Delta\psi)=\partial_t\ml{N}_{\psi},
\end{align}
which is also the inviscid case for  \eqref{Eq-New-C} with $b\nu=0$ (recall that, according to \eqref{Viscosity}, $b\nu=\mu_{\mathrm{B}}+\frac{4}{3}\mu_{\mathrm{V}}$). This inviscid Cattaneo-type model is the Lighthill approximation of the compressible Euler-Cattaneo equations \eqref{CNS} with $\mu_{\mathrm{B}}=\mu_{\mathrm{V}}=0$ and the heat conduction \eqref{Cattaneo-Law}. The lack of a viscous dissipation in  the linearized model leads to the conservation of a suitable defined energy (see \cite{Kaltenbacher-Lasiecka-Marchand-2011,Marchand-McDevitt-Triggiani=2012}) rather than 
to the decay of the energy. It arises quite naturally the question on whether or not local in time solutions to the nonlinear inviscid Cattaneo-type model \eqref{Inviscid-JMGT} may be globally extended.  We will give a negative answer to this question for low dimensions, namely, by showing that energy solutions to \eqref{Inviscid-JMGT} in $\mb{R}^n$ blow up in finite time under some integral conditions for the initial data when $n=1,2,3$. Note that the limit case $\tau=0$ with $\ml{N}_{\psi}=|\psi_t|^2$ has been studied by \cite{Alinhac}.

\subsection{Nonlinear acoustics model involving the fractional viscous dissipations}
It is well-known that the fractional Laplace operator may simulate various anomalous diffusions with wide applications (see two recent monographs \cite{Bucur-Valdinoci=2016,Pozrikidis=2016}). In the last two decades, the Navier-Stokes equations with a  fractional viscous term $(-\Delta)^{\alpha}\mathbf{u}$, which have been employed to describe the motion of a fluid with internal friction interactions (see \cite{Mercado-Guido=2013}), have been widely studied. One may see the pioneering monograph \cite{Lions=1969} in 1969, and related references \cite{Katz-Pavlovic=2002,Wu=2006,Tao=2009,Tang-Yu=2015,Wang-Wu-Ye=2020}  for the fractional Navier-Stokes equations. 

Motivated by the recent progresses for these fractional Navier-Stokes equations, according to the viscosity expression \eqref{Viscosity} for an irrotational flow, it seems reasonable to replace the classical viscosity $-b\nu\Delta\mathbf{u}$ with the fractional viscosity $b\nu(-\Delta)^{\alpha}\mathbf{u}$. Let us follow a standard approach to deduce nonlinear acoustics models, e.g. \cite[Section 2]{Bucur-Valdinoci=2016}, \cite[Appendix A]{Kaltenbacher-Thalhammer-2018} and \cite[Section 2]{Liu-Qin-Zhang=2022}. To be specific, by combining the conservation of mass, the conservation of momentum involved the fractional Laplacians, the conservation of energy associated with Cattaneo's law \eqref{Cattaneo-Law}, and the state equation under irrotational flows, via the Lighthill approximation, we may derive the following Cattaneo-type model with fractional Laplacians of the viscosity:
\begin{align}\label{Eq-New-C-2}
	(\tau\partial_t+\ml{I})(\psi_{tt}-c_0^2\Delta\psi+b\nu(-\Delta)^{\alpha} \psi_t)=\partial_t\ml{N}_{\psi},
\end{align}
%
%
where the physical constants $\tau,b,\nu,c_0,A,B$ are explained in Table \ref{Table_1} and Table \ref{Table_2}. In other words, the viscous dissipation in the nonlinear acoustic wave equation is determined by the viscous terms in the conservation of momentum \eqref{CNS}$_2$ so that the derivation of \eqref{Eq-New-C-2} is just a slight modification of the one in \cite[Section 2]{Liu-Qin-Zhang=2022} via replacing $\varepsilon\kappa$ by $\varepsilon^2\kappa$, and further replacing the viscous term in \cite[Equation (11)]{Liu-Qin-Zhang=2022} by the term with the fractional Laplacian $-\varepsilon\sqrt{\varepsilon}(\mu_{\mathrm{B}}+\frac{4}{3}\mu_{\mathrm{V}})(-\Delta)^{\alpha}\psi$.  

In particular, taking $\alpha=1$, our model in \eqref{Eq-New-C-2} coincides exactly  with the classical Cattaneo-type model in \eqref{Eq-New-C}. Thus, the non-local third-order (in time) evolution equation \eqref{Eq-New-C-2} can be regarded as the Lighthill approximation of the fractional Navier-Stokes-Cattaneo equations under irrotational flows. However, differently from the classical Cattaneo-type model \eqref{Eq-New-C}, the fractional dissipation $b\nu(-\Delta)^{\alpha}\psi_t$ may completely change the behavior of solutions depending on the power $\alpha$ in regards to the global (in time) strong solvabilities in the fractional Navier-Stokes equations \cite{Katz-Pavlovic=2002}, and the effectiveness of the damping (see \cite{D'Abbicco-Reissig=2014,D'Abbicco-Ebert=2014-NA,D'Abbicco-Ebert=2022-NA}). Hence, it is interesting to explore the influence of the fractional viscous dissipation on the solutions to \eqref{Eq-New-C-2} asymptotically for long-time.

\subsection{Main purpose of the paper}
In the present paper, we investigate the effect of the viscous dissipation on the following Cauchy problem for the Cattaneo-type model:
\begin{align}\label{Eq-Cattaneo-Fractional}
\begin{cases}
(\tau\partial_t+\ml{I})\left(\psi_{tt}-c_0^2\Delta\psi+b\nu(-\Delta)^{\alpha} \psi_t\right)=\partial_t\ml{N}_{\psi},&x\in\mb{R}^n,\ t>0,\\
	\psi(0,x)=\psi_0(x),\ \psi_t(0,x)=\psi_1(x),\ \psi_{tt}(0,x)=\psi_2(x),&x\in\mb{R}^n,
\end{cases}
\end{align}
where the nonlinearity $\ml{N}_{\psi}=\ml{N}_{\psi}(t,x)$ is defined in \eqref{N0}, $\ml{I}$ is the identity operator and $\alpha\in [0,1]$, $\tau>0$, $c_0>0$, $b\nu\geqslant0$, $B/A>0$ are physical constants as in Table \ref{Table_2}. The contribution of this paper is twofold. More precisely, we will consider the small viscosity case $0<b\nu\ll 1$ and the inviscid case $b\nu=0$, respectively. Indeed, determining the behavior  of viscous flows at small viscosity is one of the fundamental problems in fluid mechanics (see, for example, \cite{Maekawa-Mazz=2018}). Our main purposes 
in this work are
\begin{enumerate}
\item to derive the long-time asymptotic behavior of global (in time) solutions to the viscous Cattaneo-type model \eqref{Eq-Cattaneo-Fractional} for $0<b\nu\ll 1$ under the influence of a fractional Laplacian, emphasizing the role of the power $\alpha\in[0,1]$; 
\item to prove the blow-up in finite time for energy solutions to the inviscid Cattaneo-type model in \eqref{Eq-Cattaneo-Fractional} when $b\nu=0$.
\end{enumerate}
 That is to say, the presence (resp. the absence) of a  viscous dissipation in the nonlinear Cattaneo-type model \eqref{Eq-Cattaneo-Fractional} is a criterion for the global in time existence (resp. the blow-up) of solutions. Note that the final conclusions will be shown by Table \ref{Table_3} in Section \ref{Sec-Final}.

To investigate the global (in time) behavior of solutions to the nonlinear viscous Cattaneo-type model \eqref{Eq-Cattaneo-Fractional} with $0<b\nu\ll 1$, we firstly study some qualitative properties of solutions to the corresponding linearized model, see the Cauchy problem in \eqref{Eq-Linear-Cattaneo-Fractional}. By applying the WKB analysis and the Fourier analysis to \eqref{Eq-Linear-Cattaneo-Fractional}, we derive optimal decay estimates in the $L^2$ framework, whose optimality is guaranteed by the same kind of upper and lower bounds for $t\gg1$. Furthermore, we obtain the long-time asymptotic profiles of the solutions.

Thanks to these preparatory results for the linearized model, we are able to construct time-weighted (depending on $\alpha$) Sobolev spaces with suitable regularities and to prove the global (in time) existence of small data solutions to \eqref{Eq-Cattaneo-Fractional} by using a standard Banach fixed point argument. To overcome some difficulties arising from the handling of higher order derivatives $\nabla\psi\cdot \nabla\psi_t$, we will set a suitable evolution space when $\alpha\in(1/2,1]$, with some loss of regularity in comparison with the corresponding linearized problem. Simultaneously, upper bound estimates of some Sobolev norms of the solutions and their derivatives will be derived as a byproduct. By managing the nonlinear terms properly (different from the one in \cite{Liu-Qin-Zhang=2022}), we arrive at optimal lower bound estimates and asymptotic profiles of the solutions when $|\mb{B}_0|\neq0$, where 
\begin{align}\label{B0}
	\mb{B}_0:=\int_{\mb{R}^n}\left(\psi_1(x)+\tau\psi_2(x)-\frac{\tau B}{2Ac_0^2}|\psi_1(x)|^2-\tau|\nabla\psi_0(x)|^2\right)\mathrm{d}x.
\end{align}
 These results reveal different profiles (depending on $\alpha$) of the global (in time) solutions to the viscous Cattaneo-type model \eqref{Eq-Cattaneo-Fractional} with $0<b\nu\ll 1$, e.g., the large-time profile for the solution itself is given by
	\begin{align*}
	\psi(t,x)\sim \begin{cases}
		\displaystyle{\ml{F}^{-1}_{\xi\to x}\left(\frac{1}{b\nu|\xi|^{2\alpha}}\mathrm{e}^{-\frac{1}{b\nu}c_0^2|\xi|^{2-2\alpha}t}\right)}|\mb{B}_0|&\mbox{when} \ \ \alpha\in[0,1/2),\vspace{0.15cm}\\
		\displaystyle{\ml{F}^{-1}_{\xi\to x}\left(\frac{\sin(\sqrt{c_0^2-\frac{1}{4}b^2\nu^2}|\xi|t)}{\sqrt{c_0^2-\frac{1}{4}b^2\nu^2}|\xi|}\mathrm{e}^{-\frac{b\nu}{2}|\xi|t}\right)}|\mb{B}_0|&\mbox{when} \ \ \alpha=1/2,\vspace{0.15cm}\\
		\displaystyle{\ml{F}^{-1}_{\xi\to x}\left(\frac{\sin(c_0|\xi|t)}{c_0|\xi|}\mathrm{e}^{-\frac{b\nu}{2}|\xi|^{2\alpha}t}\right)}|\mb{B}_0|&\mbox{when}\ \ \alpha\in(1/2,1].
	\end{cases}
\end{align*}
In other words, the global (in time) solutions in the viscous case have the anomalous diffusion (cf. \cite{BKW=2001,CC=2004,J=2005})  profiles when $\alpha\in[0,1/2)$ and the diffusion wave profiles when $\alpha\in[1/2,1]$ for large-time.

Let us turn to the inviscid case. By constructing three different time-dependent functionals with the the help of a function  introduced in \cite{Yordanov-Zhang=2006}, if the initial data fulfill some non-negative and compactly supported assumptions, we demonstrate the blow-up of the energy solutions to the nonlinear inviscid Cattaneo-type model \eqref{Eq-Cattaneo-Fractional} with $b\nu=0$ when $n=1,2,3$. Additionally, we will obtain upper bound estimates for the lifespan as a byproduct. Our approach is based on an iteration method and inspired by \cite{Chen-Palmieri=2021}.  Consequently, we will be able to understand the blow-up mechanism of the nonlinear Cattaneo-type model \eqref{Eq-Cattaneo-Fractional} generated by a zero viscosity. 

\subsection{Notations}
We define the following zones in the Fourier space:
\begin{align*}
	\ml{Z}_{\intt}(\varepsilon_0):=\{|\xi|\leqslant\varepsilon_0\ll1\},\ \ 
	\ml{Z}_{\bdd}(\varepsilon_0,N_0):=\{\varepsilon_0\leqslant |\xi|\leqslant N_0\},\ \ 
	\ml{Z}_{\extt}(N_0):=\{|\xi|\geqslant N_0\gg1\},
\end{align*}
 and we introduce the cut-off functions $\chi_{\intt}(\xi),\chi_{\bdd}(\xi),\chi_{\extt}(\xi)\in \mathcal{C}^{\infty}$ with supports in $\ml{Z}_{\intt}(\varepsilon_0)$, $\ml{Z}_{\bdd}(\varepsilon_0/2,2N_0)$ and $\ml{Z}_{\extt}(N_0)$, respectively, such that $\chi_{\bdd}(\xi)=1-\chi_{\extt}(\xi)-\chi_{\intt}(\xi)$ for all $\xi \in \mb{R}^n$.

The writing $f\lesssim g$  means that there exists a positive constant $C$ fulfilling $f\leqslant Cg$, which may be changed from line to line and, analogously, for $f\gtrsim g$. Furthermore, we write $f\simeq  g$  if and only if $f\lesssim g$ and $f\gtrsim g$, concurrently. $B_R$ denotes the ball around the origin with radius $R$ in $\mb{R}^n$.

Moreover, $\dot{H}^s_q$ with $s\in\mb{R}$ and $1\leqslant q<\infty$ denotes the Bessel potential space based on $L^q$, where $|D|^s$ with $s\in\mb{R}$ stands for the pseudo-differential operator with its symbol $|\xi|^s$. We recall the weighted $L^1$ space such that $L^{1,1}:=\{ f\in L^1:|x|f\in L^1\}$ and the notation $P_f:=\int_{\mb{R}^n}f(x)\mathrm{d}x$ for the zero momentum of $f$. 

\section{Main results}
After introducing some preparatory results in Section \ref{SubSec-2.1}, we state global (in time) behavior results for  the viscous Cattaneo-type model \eqref{Eq-Cattaneo-Fractional} with $0<b\nu\ll 1$ in the anomalous diffusion case $\alpha\in[0,1/2)$ and the diffusion wave case $\alpha\in[1/2,1]$, respectively, in Sections \ref{SubSec-2.2} and \ref{SubSec-2.3}. Finally, the blow-up result with the corresponding lifespan estimates for the inviscid Cattaneo-type model \eqref{Eq-Cattaneo-Fractional} when $b\nu=0$ is presented in Section \ref{Subsec-2.4}. 
\subsection{Auxiliary functions and explanations}\label{SubSec-2.1}
Let us introduce several Fourier multipliers as follows:
\begin{align}
\ml{G}_{1,j}(t,x)&:=\ml{F}^{-1}_{\xi\to x}\left(\chi_{\intt}(\xi)\frac{1}{b\nu}|\xi|^{-2\alpha}\partial_t^j\mathrm{e}^{-\frac{1}{b\nu}c_0^2|\xi|^{2-2\alpha}t}\right)\ \ \ \ \qquad\qquad\  \,\mbox{when} \ \ \alpha\in[0,1/2),\label{mlG1}\\
\ml{G}_{2,j}(t,x)&:=\ml{F}^{-1}_{\xi\to x}\left(\chi_{\intt}(\xi)\frac{\partial_t^j\sin(c_0|\xi|t)}{c_0|\xi|}\mathrm{e}^{-\frac{b\nu}{2}|\xi|^{2\alpha}t}\right)\ \ \ \ \ \qquad\qquad\  \,\mbox{when} \ \ \alpha\in(1/2,1],\label{mlG2}\\
\ml{G}_{3,j}(t,x)&:=\ml{F}^{-1}_{\xi\to x}\left(\chi_{\intt}(\xi)\frac{\partial_t^j\left(\sin(\frac{1}{2}\sqrt{4c_0^2-b^2\nu^2}|\xi|t)\mathrm{e}^{-\frac{b\nu}{2}|\xi|t}\right)}{\frac{1}{2}\sqrt{4c_0^2-b^2\nu^2}|\xi|}\right)\ \ \mbox{when} \ \ \alpha=1/2,\label{mlG3}
\end{align}
with $j=0,1,2$, which will appear in the asymptotic profiles of solutions. Recalling the definition of $|\mb{B}_0|$ in \eqref{B0}, let us give some explanations and comments for these profiles.
\begin{itemize}
	\item When $\alpha\in[0,1/2)$, the solutions $\partial_t^j\psi(t,\cdot)$ behave as the anomalous diffusion functions $\ml{G}_{1,j}(t,\cdot)\mb{B}_0$ in the $L^2$ framework, which consist of the singularity $\chi_{\intt}(\xi)|\xi|^{-2\alpha}$ and the anomalous diffusion multiplier $\exp(-c|\xi|^{2-2\alpha}t)$. For $\alpha=0$, the singularity disappears, and the multiplier becomes a Gaussian multiplier.
	\item When $\alpha\in[1/2,1]$, the solutions $\partial_t^j\psi(t,\cdot)$ behave as the diffusion wave functions $\ml{G}_{k,j}(t,\cdot)\mb{B}_0$ in the $L^2$ framework with $k=2,3$, which consist of the product of the singular factor $\chi_{\intt}(\xi)|\xi|^{-1}$, the oscillating multiplier $\sin(\omega_0 |\xi|t)$  as in the wave equation,  and the other anomalous diffusion multiplier $\exp(-c|\xi|^{2\alpha}t)$.
\end{itemize}

Let us discuss more in detail, the non-trivial quantity $\mb{B}_0$ introduced in \eqref{B0}, which  consists of two components 
\begin{align}
\mb{B}_0=P_{\psi_1+\tau\psi_2}-\tau P_{\frac{B}{2Ac_0^2}|\psi_1|^2+|\nabla \psi_0|^2}. \label{def B0 profile}
\end{align} The solution to the linearized Cauchy problem contributes to the first component $P_{\psi_1+\tau\psi_2}$. On the other hand, the solution to the nonlinear Cauchy problem with homogeneous initial data contributes to the second component 	$-\tau P_{\frac{B}{2Ac_0^2}|\psi_1|^2+|\nabla\psi_0|^2}$,
which originates from the nonlinear terms at $t=0$. 


We end this subsection introducing the following spaces for the Cauchy data:
\begin{align}\label{A1-SPACE}
	\ml{A}_{s,j}^{(1)}:=(H^{s+\max\{1,2-j\}}\cap L^1)\times(H^{s+1}\cap L^1)\times (H^s\cap L^1)
\end{align}
when $\alpha\in[0,1/2]$, and
\begin{align}\label{A2-SPACE}
	\ml{A}_{s,j}^{(2)}:=\begin{cases}
		({H}^{s+2}\cap L^1)\times ({H}^{s+2\alpha}\cap L^1)\times ({H}^s\cap L^1)&\mbox{when}\ \ j=0,\\
		({H}^{s+1}\cap L^1)\times({H}^{s+1}\cap L^1)\times({H}^{s+1-2\alpha}\cap L^1)&\mbox{when}\ \ j=1,\\
		({H}^{s+2-2\alpha}\cap L^1)\times ({H}^{s+2-2\alpha}\cap L^1)\times ({H}^s\cap L^1)&\mbox{when}\ \ j=2,
	\end{cases}
\end{align}
when $\alpha\in(1/2,1]$ with some parameters $s$ and $j=0,1,2$.

\subsection{Global (in time) behavior of the solutions in the anomalous diffusion case}\label{SubSec-2.2}
We now state the first main result in the present paper.
\begin{theorem}\label{Thm-01}
Let $s>n/2-1$ for all $n\geqslant 2$. Let us consider the nonlinear viscous Cattaneo-type model \eqref{Eq-Cattaneo-Fractional} with $0<b\nu\ll 1$ and $\alpha\in[0,1/2)$ for $n\geqslant 3$ and $\alpha\in(0,1/2)$ for $n=2$. There exists a constant $\epsilon>0$ such that for all $(\psi_0,\psi_1,\psi_2)\in\ml{A}_{s,0}^{(1)}$ with $\|(\psi_0,\psi_1,\psi_2)\|_{\ml{A}_{s,0}^{(1)}}\leqslant\epsilon$, there is a uniquely determined Sobolev solution
\begin{align*}
	\psi\in\ml{C}([0,\infty),H^{s+2})\cap\ml{C}^1([0,\infty),H^{s+1})\cap \ml{C}^2([0,\infty),H^s).
\end{align*} 
 For $j=0,1,2$ and, either $s_0=j-2$ or $s_0=s$, we have that the following estimates are fulfilled:
\begin{align}
\|\partial_t^j\psi(t,\cdot)\|_{\dot{H}^{s_0+2-j}}&\lesssim (1+t)^{-(j+1)-\frac{2s_0+n-2j}{2(2-2\alpha)}}\|(\psi_0,\psi_1,\psi_2)\|_{ \ml{A}_{s,0}^{(1)}},\label{Prof-02}\\
\left\|\left(\partial_t^j\psi-\ml{G}_{1,j}\mb{B}_0\right)(t,\cdot)\right\|_{\dot{H}^{s_0+2-j}}&=o\left(t^{-(j+1)-\frac{2s_0+n-2j}{2(2-2\alpha)}}\right),\label{Prof-01}
\end{align} where $\ml{G}_{1,j}$ is defined in \eqref{mlG1}.
 Furthermore, the following lower bound  estimates  hold:
\begin{align}
\|\partial_t^j\psi(t,\cdot)\|_{\dot{H}^{s_0+2-j}}\gtrsim t^{-(j+1)-\frac{2s_0+n-2j}{2(2-2\alpha)}}|\mb{B}_0| \label{Prof-03}
\end{align}
for $t\gg1$, provided that $|\mb{B}_0|\neq0$.
\end{theorem}
\begin{remark}
The upper and lower bounds for $\|\partial_t^j \psi (t,\cdot)\|_{\dot{H}^{s_0+2-j}}$ in \eqref{Prof-02} and \eqref{Prof-03} show that, for $j=0,1,2$ and, either $s_0=j-2$ or $s_0=s$, the optimal decay rates for long-time are given by 
\begin{align*}
	\|\partial_t^j\psi(t,\cdot)\|_{\dot{H}^{s_0+2-j}}\simeq t^{-(j+1)-\frac{2s_0+n-2j}{2(2-2\alpha)}}
\end{align*}
provided that $|\mb{B}_0|\neq0$. Moreover, the error estimates in \eqref{Prof-01} tell us that the anomalous diffusion functions $\ml{G}_{1,j}(t,\cdot)\mb{B}_0$ are the large-time profiles in the $L^2$ framework of $\partial_t^j \psi(t,\cdot)$ when $\alpha\in[0,1/2)$.
\end{remark}
\begin{remark}
As we will discuss in Remark \ref{Rem01}, by assuming additionally
\begin{align*}
\psi_1+\tau\psi_2\in L^{1,1}\ \ \mbox{and}\ \ \frac{B}{2Ac_0^2}|\psi_1|^2+|\nabla\psi_0|^2\in L^{1,1},	
\end{align*} the decay rates of the error estimates \eqref{B2} can be improved by $-\frac{\min\{\alpha,1-2\alpha\}}{1-\alpha}$ as $\alpha\in[0,1/2)$. Thus, by combining \eqref{B1}, \eqref{B2} with the improved decay rates, and \eqref{B3}, \eqref{C3}, we may derive
\begin{align*}
	&(1+t)^{j+1+\frac{2s_0+n-2j}{2(2-2\alpha)}}\left\|\left(\partial_t^j\psi-\ml{G}_{1,j}\mb{B}_0\right)(t,\cdot)\right\|_{\dot{H}^{s_0+2-j}}\\
	&\qquad\lesssim (1+t)^{-\frac{\min\{2\alpha,1-2\alpha\}}{2(1-\alpha)}}\left(\|(\psi_0,\psi_1,\psi_2)\|_{ \ml{A}_{s,0}^{(1)}}+\|\psi_1+\tau\psi_2\|_{L^{1,1}}+\left\|\frac{B}{2Ac_0^2}|\psi_1|^2+|\nabla\psi_0|^2\right\|_{L^{1,1}}\right)
\end{align*}
for any $t\geqslant0$ and, either $s_0=j-2$ or $s_0=s$ with $j=0,1,2$. The above estimates not only improve those in \eqref{Prof-01} with the additional decay rate $-\frac{\min\{2\alpha,1-2\alpha\}}{2(1-\alpha)}$, but also indicate the presence of a new threshold $\alpha=1/4$ for the second-order asymptotic profiles which is due to the different additional decay rates in the two cases $\alpha\in[0,1/4]$ and $\alpha\in(1/4,1/2)$. This is one of the new effect from the fractional Laplacians.
\end{remark}

\begin{remark} 
In the statement of Theorem \ref{Thm-01}, we excluded the case $\alpha=0$ when $n=2$ for technical reasons. We point out that in Remark \ref{Remark alpha=0 and n=2} we will provide a weaker global (in time) existence result in this case.
\end{remark}

\subsection{Global (in time) behavior of the solutions in the diffusion wave case}\label{SubSec-2.3}
The asymptotic profiles for solutions to \eqref{Eq-Cattaneo-Fractional} will be changed from the anomalous diffusion functions (when $\alpha\in[0,1/2)$ in Theorem \ref{Thm-01}) to the diffusion wave functions (when $\alpha\in[1/2,1]$ in the next theorems). This is one of the main effect of  the fractional Laplacian $(-\Delta)^{\alpha}$ in \eqref{Eq-Cattaneo-Fractional}.
\begin{theorem}\label{Thm-02}
	Let $s>n/2-1$ for all $n\geqslant 3$. Let us consider the nonlinear viscous Cattaneo-type model \eqref{Eq-Cattaneo-Fractional} with $0<b\nu\ll 1$ and $\alpha\in(1/2,1]$. There exists a constant $\epsilon>0$ such that for all $(\psi_0,\psi_1,\psi_2)\in\ml{A}_{s,0}^{(2)}$ with $\|(\psi_0,\psi_1,\psi_2)\|_{\ml{A}_{s,0}^{(2)}}\leqslant\epsilon$, there is a uniquely determined Sobolev solution
	\begin{align}\label{D1}
		\psi\in\ml{C}([0,\infty),H^{s+2})\cap\ml{C}^1([0,\infty),H^{s+1})\cap \ml{C}^2([0,\infty),H^{s+1-2\alpha}).
	\end{align} For $j=0,1$ and, either $s_0=j-2$ or $s_0=s$, and either $s_1=2\alpha-1$ or $s_1=s$, we have that the following estimates are fulfilled:
		\begin{align*}
	\|\partial_t^j\psi(t,\cdot)\|_{\dot{H}^{s_0+2-j}}&\lesssim (1+t)^{-\frac{2s_0+n+2}{4\alpha}}\|(\psi_0,\psi_1,\psi_2)\|_{ \ml{A}_{s,0}^{(2)}},\\
	\left\|\left(\partial_t^j\psi-\ml{G}_{2,j}\mb{B}_0\right)(t,\cdot)\right\|_{\dot{H}^{s_0+2-j}}&=o\left(t^{-\frac{2s_0+n+2}{4\alpha}}\right),
\end{align*}	
moreover,
\begin{align*}
		\|\partial_t^2\psi(t,\cdot)\|_{\dot{H}^{s_1+1-2\alpha}}&\lesssim(1+t)^{-\frac{2s_1+n+4-4\alpha}{4\alpha}}\|(\psi_0,\psi_1,\psi_2)\|_{ \ml{A}_{s,0}^{(2)}},\\
	\left\|\left(\partial_t^2\psi-\ml{G}_{2,2}\mb{B}_0\right)(t,\cdot)\right\|_{\dot{H}^{s_1+1-2\alpha}}&=o\left(t^{-\frac{2s_1+n+4-4\alpha}{4\alpha}}\right).
	\end{align*}
	  Furthermore, the following lower bound  estimates hold:
	\begin{align*}
	\|\partial_t^j\psi(t,\cdot)\|_{\dot{H}^{s_0+2-j}}&\gtrsim t^{-\frac{2s_0+n+2}{4\alpha}}|\mb{B}_0|,\\
	\|\partial_t^2\psi(t,\cdot)\|_{\dot{H}^{s_1+1-2\alpha}}&\gtrsim t^{-\frac{2s_1+n+4-4\alpha}{4\alpha}}|\mb{B}_0|,
	\end{align*}
	for $t\gg1$, provided that $|\mb{B}_0|\neq0$.
\end{theorem}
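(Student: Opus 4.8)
The plan is to erect the nonlinear theory on top of the linear analysis of \eqref{Eq-Linear-Cattaneo-Fractional} through Duhamel's principle and a Banach fixed point argument in a time-weighted Sobolev space whose weights are dictated by the decay rates in the statement. First I would record the linear ingredients in the regime $\alpha\in(1/2,1]$. The characteristic equation of the third order operator factorizes as $(\tau\lambda+1)(\lambda^2+b\nu|\xi|^{2\alpha}\lambda+c_0^2|\xi|^2)=0$, so besides the relaxation mode $\lambda=-1/\tau$ one finds, in the interior zone $\ml{Z}_{\intt}(\varepsilon_0)$, the conjugate pair $\lambda_{\pm}\simeq-\tfrac{b\nu}{2}|\xi|^{2\alpha}\pm\mathrm{i}\,c_0|\xi|$ (here $b\nu<2c_0$ is used), which is exactly the diffusion wave symbol of $\ml{G}_{2,j}$ in \eqref{mlG2}. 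I would take from the linearized analysis: (i) the $(L^1\cap\dot{H}^{\sigma})\to\dot{H}^{\sigma}$ decay estimates for the propagators $E_0,E_1,E_2$, obtained by splitting the phase space with $\chi_{\intt},\chi_{\bdd},\chi_{\extt}$; (ii) the first order profile, i.e. that the interior part of the linear solution and its time derivatives equals $\ml{G}_{2,j}P_{\psi_1+\tau\psi_2}$ up to an $o(\cdot)$ remainder; and (iii) the sharp lower bound $\|\ml{G}_{2,j}(t,\cdot)\|_{\dot{H}^{s_0+2-j}}\gtrsim t^{-\frac{2s_0+n+2}{4\alpha}}$, which follows by the change of variables $\eta=t^{1/(2\alpha)}\xi$ after which the oscillation $\sin^2(c_0t^{1-1/(2\alpha)}|\eta|)$ averages to a positive constant (genuine precisely because $\alpha>1/2$).

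Second, writing $N[\psi]:=\frac{B}{2Ac_0^2}|\psi_t|^2+|\nabla\psi|^2$ and denoting by $\ml{K}(t)$ the fundamental solution of the third order operator (the response to a unit highest order impulse, so $\ml{K}(0)=0$ and $\ml{K}$ shares the interior profile of $E_2$), Duhamel's principle represents the solution as
\[
\psi(t)=E_0(t)\psi_0+E_1(t)\psi_1+E_2(t)\psi_2+\int_0^t\ml{K}(t-s)\,\partial_sN[\psi](s)\,\mathrm{d}s .
\]
The crucial point is that $\partial_sN[\psi]$ contains $\psi_t\psi_{tt}$ and $\nabla\psi\cdot\nabla\psi_t$, whose regularity is not recovered by the oscillatory (non-parabolic) propagator when $\alpha\in(1/2,1]$. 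I would therefore integrate by parts in $s$ to transfer $\partial_s$ onto the kernel,
\[
\int_0^t\ml{K}(t-s)\,\partial_sN[\psi](s)\,\mathrm{d}s=-\ml{K}(t)N[\psi](0)+\int_0^t(\partial_t\ml{K})(t-s)\,N[\psi](s)\,\mathrm{d}s ,
\]
which removes the derivative loss and produces the boundary term carrying $N[\psi](0)=\frac{B}{2Ac_0^2}|\psi_1|^2+|\nabla\psi_0|^2$; its interior low-frequency limit generates the nonlinear component $-\tau P_{N[\psi](0)}$ of $\mb{B}_0$. Guided by this representation and the stated rates, I would set a solution space $X$ with norm $\|\psi\|_X:=\sup_{t\ge0}\sum_{j=0}^{2}(1+t)^{\theta_j}(\|\partial_t^j\psi(t,\cdot)\|_{L^2}+\|\partial_t^j\psi(t,\cdot)\|_{\dot{H}^{\star_j}})$, where the weights $\theta_j$ and the regularity levels $\star_j$ match the theorem; in particular $\partial_t^2\psi$ is measured only in $\dot{H}^{s+1-2\alpha}$, reflecting the regularity loss in \eqref{D1}.

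Third, I would run the fixed point argument on the map $\Phi[\psi]$ equal to the right-hand side above. The two required estimates, $\|\Phi[\psi]\|_X\lesssim\|(\psi_0,\psi_1,\psi_2)\|_{\ml{A}_{s,0}^{(2)}}+\|\psi\|_X^2$ and $\|\Phi[\psi]-\Phi[\tilde\psi]\|_X\lesssim(\|\psi\|_X+\|\tilde\psi\|_X)\|\psi-\tilde\psi\|_X$, reduce to fractional Gagliardo--Nirenberg and product estimates for $N[\psi]$ in $L^1$, $L^2$ and $\dot{H}^{s}$ (using $s>n/2-1$), combined with the linear decay estimates and the convergence of the time integral $\int_0^t(1+t-s)^{-a}(1+s)^{-b}\,\mathrm{d}s$; the hypothesis $n\ge3$ is precisely what renders these exponents integrable in the diffusion wave regime. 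This delivers the global solution in the class \eqref{D1}, its uniqueness, and the upper bound estimates.

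Finally, for the asymptotic profiles I would subtract the candidate $\ml{G}_{2,j}\mb{B}_0$: the linear part contributes $\ml{G}_{2,j}P_{\psi_1+\tau\psi_2}$ and the boundary term above contributes $\ml{G}_{2,j}(-\tau P_{N[\psi](0)})$, together reconstructing $\mb{B}_0$. The remainders are shown to be $o(\cdot)$ of the leading rate via the continuity of $\widehat{N[\psi]}(s,\cdot)$ at $\xi=0$ together with a dominated-convergence (rescaling) argument on the low-frequency integral, the already-proven faster decay of the non-interior parts entering automatically; an extra $L^{1,1}$ regularity would upgrade $o(\cdot)$ to an explicit rate, as in the subsequent remark. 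The optimal lower bounds then follow from the reverse triangle inequality, since $\|\ml{G}_{2,j}\mb{B}_0\|\gtrsim t^{-\frac{2s_0+n+2}{4\alpha}}|\mb{B}_0|$ while the error is $o$ of the same power, so for $t\gg1$ and $|\mb{B}_0|\neq0$ the solution inherits the sharp rate. The main obstacle throughout is the derivative loss carried by $\partial_tN[\psi]$ in the oscillatory regime $\alpha\in(1/2,1]$: balancing the weights $\theta_j$ and the reduced level $s+1-2\alpha$ for $\partial_t^2\psi$ against the quadratic nonlinearity, so that both the contraction closes and the Duhamel time integral converges, is the delicate part.
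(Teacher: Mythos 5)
Your proposal is correct and follows essentially the same route as the paper: the same integration by parts in the Duhamel term (using that $\widehat{K}_2$ and $\partial_t\widehat{K}_2$ vanish at $t=0$, which simultaneously removes the derivative loss from $\partial_t\ml{N}_\psi$ and produces the $-\tau P_{\frac{B}{2Ac_0^2}|\psi_1|^2+|\nabla\psi_0|^2}$ component of $\mb{B}_0$), the same time-weighted space with $\partial_t^2\psi$ measured only in $\dot{H}^{s+1-2\alpha}$, and the same profile-subtraction and reverse-triangle-inequality scheme for the optimal lower bounds. The only minor quibble is your attribution of the hypothesis $n\geqslant 3$: it is required first of all so that the $|\xi|^{-1}$ low-frequency singularity of $\widehat{\ml{G}}_{2,j}$ is square-integrable in the $L^2$-level estimates (the condition $2s+n>0$ of Lemma \ref{Lemma-01} with $s=-1$), the convergence of the time integrals being a secondary consequence.
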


\begin{theorem}\label{Thm-03}
	Let $s>n/2-1$ for all $n\geqslant 2$. Let us consider the nonlinear viscous Cattaneo-type model \eqref{Eq-Cattaneo-Fractional} with $0<b\nu\ll 1$ and $\alpha=1/2$. There exists a constant $\epsilon>0$ such that for all $(\psi_0,\psi_1,\psi_2)\in\ml{A}_{s,0}^{(1)}$ with $\|(\psi_0,\psi_1,\psi_2)\|_{\ml{A}_{s,0}^{(1)}}\leqslant\epsilon$, there is a uniquely determined Sobolev solution
	\begin{align*}
		\psi\in\ml{C}([0,\infty),H^{s+2})\cap\ml{C}^1([0,\infty),H^{s+1})\cap\ml{C}^2([0,\infty),H^s).
	\end{align*} For $j=0,1,2$ and, either $s_0=j-2$ or $s_0=s$, we have that the following estimates are fulfilled:
		\begin{align*}
	\|\partial_t^j\psi(t,\cdot)\|_{\dot{H}^{s_0+2-j}}&\lesssim (1+t)^{-(s_0+1)-\frac{n}{2}}\|(\psi_0,\psi_1,\psi_2)\|_{ \ml{A}_{s,0}^{(1)}},\\
	\left\|\left(\partial_t^j\psi-\ml{G}_{3,j}\mb{B}_0\right)(t,\cdot)\right\|_{\dot{H}^{s_0+2-j}}&=o\left(t^{-(s_0+1)-\frac{n}{2}}\right).
	\end{align*}
	 Furthermore, the following lower bound  estimates hold:
	\begin{align*}
	\|\partial_t^j\psi(t,\cdot)\|_{\dot{H}^{s_0+2-j}}\gtrsim t^{-(s_0+1)-\frac{n}{2}}|\mb{B}_0|
	\end{align*}
	for $t\gg1$, provided that $|\mb{B}_0|\neq0$. 
\end{theorem}
\begin{remark}
Comparing Theorem \ref{Thm-03} with Theorems \ref{Thm-01} and \ref{Thm-02}, respectively, it is evident that
\begin{itemize}
	\item the regularities of the solutions and initial data in the threshold case $\alpha=1/2$ are the same as those for the anomalous diffusion case $\alpha\in[0,1/2)$. This can be understood by the analysis of the large frequencies in the linear problem;
	\item the decay rates and asymptotic profiles for the threshold case $\alpha=1/2$  coincide with those for the diffusion wave case as $\alpha\downarrow1/2$. This can be explained by the analysis of the small frequencies in the linear problem.
\end{itemize}
\end{remark}

\begin{remark} 
	The upper and lower bounds for the norm of $\psi(t,\cdot),\partial_t \psi(t,\cdot), \partial_t^2 \psi(t,\cdot)$  in suitable homogeneous Sobolev spaces in Theorems \ref{Thm-02} and \ref{Thm-03} show that, if $|\mb{B}_0|\neq0$,  the optimal decay rates for long-time are given by
	\begin{align*}
		\|\partial_t^j\psi(t,\cdot)\|_{\dot{H}^{s_0+2-j}}&\simeq t^{-\frac{2s_0+n+2}{4\alpha}}\ \ \ \quad \mbox{when}\ \ \alpha\in[1/2,1],\\
	\|\partial_t^2\psi(t,\cdot)\|_{\dot{H}^{s_1+1-2\alpha}}&\simeq t^{-\frac{2s_1+n+4-4\alpha}{4\alpha}}\ \ \, \mbox{when}\ \ \alpha\in(1/2,1],\\
	\|\partial_t^2\psi(t,\cdot)\|_{\dot{H}^{s_2}}&\simeq t^{-(s_2+1)-\frac{n}{2}}\ \ \,\,\,\,\,  \mbox{when}\ \ \alpha=1/2,
\end{align*}
	for either $s_0=j-2$ or $s_0=s$ with $j=0,1$; either $s_1=2\alpha-1$ or $s_1=s$; either $s_2=0$ or $s_2=s$. Moreover,  the diffusion wave functions $\ml{G}_{k,j}(t,\cdot)\mb{B}_0$ for $k=2$ and $k=3$, respectively, are the long-time profiles of $\partial_t^j \psi(t,\cdot)$ in the $L^2$ framework when $\alpha=1/2$ and when $\alpha\in(1/2,1]$, respectively.

\end{remark}
\subsection{Blow-up of the solutions to the inviscid Cattaneo-type model}\label{Subsec-2.4}
For the sake of clarity and readability, we rewrite the nonlinear inviscid Cattaneo-type model, i.e. the Cauchy problem \eqref{Eq-Cattaneo-Fractional} with $b\nu=0$, as follows:
\begin{align}\label{Eq-Cattaneo-inviscid}
	\begin{cases}
		(\tau\partial_t+\ml{I})\left(\psi_{tt}-c_0^2\Delta\psi\right)=\partial_t\ml{N}_{\psi},&x\in\mb{R}^n,\ t>0,\\
		\psi(0,x)=\epsilon_0\psi_0(x),\ \psi_t(0,x)=\epsilon_0\psi_1(x),\ \psi_{tt}(0,x)=\epsilon_0\psi_2(x),&x\in\mb{R}^n,
	\end{cases}
\end{align}
where $\epsilon_0$ is a positive parameter describing the size of initial data and it will be used later in the lifespan estimates. Before stating the blow-up result, let us introduce the class of weak solutions to the Cauchy problem \eqref{Eq-Cattaneo-inviscid} which will be considered in our blow-up result.
\begin{defn}\label{Defn-energy-solution}
Let $(\psi_0,\psi_1,\psi_2)\in H^2\times H^1\times L^2$. We say that $\psi=\psi(t,x)$ is an energy solution to the nonlinear inviscid Cattaneo-type model \eqref{Eq-Cattaneo-inviscid} on $[0,T)$ if
\begin{align*}
\psi\in\ml{C}([0,T ),H^2)\cap \ml{C}^1([0,T ),H^1)\cap \ml{C}^2([0,T ),L^2)\ \ \mbox{such that}\ \ \psi_t,\nabla\psi\in L^2_{\mathrm{loc}}([0,T )\times \mb{R}^n)
\end{align*}
fulfills 
 the integral relation
\begin{align}\label{P0}
	& \int_0^t\int_{\mb{R}^n} \psi(\eta,x) (-\tau \partial_t+\mathcal{I})(\partial_t^2-c_0^2 \Delta)\phi(\eta,x) \mathrm{d}x\mathrm{d}\eta  \notag\\
	&+ \int_{\mathbb{R}^n} \left[\tau \psi_{tt}(t,x) \phi(t,x)+\psi_t(t,x)\left(\phi(t,x)-\tau \phi_t(t,x)\right)+\psi(t,x)(\tau \phi_{tt}(t,x)-\phi_t(t,x)-\tau c_0^2\Delta \phi (t,x))\right] \mathrm{d}x\notag\\
	&=\epsilon_0\int_{\mathbb{R}^n} \left[\tau \psi_2(x) \phi(0,x)+\psi_1(x)\left(\phi(0,x)-\tau \phi_t(0,x)\right)+\psi_0(x)\left(\tau \phi_{tt}(0,x)-\phi_t(0,x)-\tau c_0^2\Delta \phi (0,x)\right)\right] \mathrm{d}x \notag\\
	& \quad -\epsilon_0^2\int_{\mb{R}^n}\left(\frac{B}{2Ac_0^2}|\psi_1(x)|^2+|\nabla\psi_0(x)|^2\right)\phi(0,x)\mathrm{d}x +\int_{\mb{R}^n}\ml{N}_{\psi}(t,x)\phi(t,x)\mathrm{d}x\notag\\
	&\quad -\int_0^t\int_{\mb{R}^n}\ml{N}_{\psi}(\eta,x)\phi_t(\eta,x)\mathrm{d}x\mathrm{d}\eta  
\end{align} 
for any $\phi\in\ml{C}_0^{\infty}([0,T )\times \mb{R}^n)$ and any $t\in(0,T )$.
\end{defn}
Let us recall the function $\Phi=\Phi(x)$, which was introduced for the first time in \cite{Yordanov-Zhang=2006}, via 
\begin{align}\label{BL-01}
	\Phi(x):=\begin{cases}
		\mathrm{e}^{x}+\mathrm{e}^{-x}&\mbox{when}\ \ n=1,\\
		\displaystyle{\int_{\mb{S}^{n-1}}\mathrm{e}^{x\cdot\omega}\mathrm{d}\sigma_{\omega}}&\mbox{when}\ \ n\geqslant 2.
	\end{cases}
\end{align}
Then, we have the next blow-up result and upper bound estimates for the lifespan.

\begin{theorem}\label{Thm-Blow-up} 
Let us consider the nonlinear inviscid Cattaneo-type model \eqref{Eq-Cattaneo-inviscid} with $\tau c_0 >1$. Let $(\psi_0,\psi_1,\psi_2)\in H^2\times H^1\times L^2$ be compactly supported functions with supports contained in $B_R$ for $R>0$ such that
\begin{align}
	&\int_{\mb{R}^n}\psi_1(x)\Phi(x)\mathrm{d}x\geqslant 0, \label{thm bu data 1} \\
& \int_{\mb{R}^n}\Big[ \tau\psi_2(x)+(\tau c_0+1)\psi_1(x)+c_0\psi_0(x)-\epsilon_0 \Big(\frac{B}{2A c_0^2}|\psi_1(x)|^2+|\nabla \psi_0(x)|^2\Big)\Big]\Phi(x)\mathrm{d}x>0, \label{thm bu data 2} \\
& \int_{\mb{R}^n}\Big[ \psi_2(x)- c_0^2\psi_0(x)-\frac{\epsilon_0}{\tau} \Big(\frac{B}{2A c_0^2}|\psi_1(x)|^2+|\nabla \psi_0(x)|^2\Big)\Big]\Phi(x)\mathrm{d}x\geqslant 0. \label{thm bu data 3}
\end{align}
Let $\psi$ be the energy solution to the Cauchy problem \eqref{Eq-Cattaneo-inviscid} according to Definition \ref{Defn-energy-solution} with the lifespan $T(\epsilon_0)$ satisfying
\begin{align*}
\mathrm{supp}\,\psi(t,\cdot)\subset B_{R+c_0t}\ \ \mbox{for any}\ \ t\in(0,T).
\end{align*}
Then, there exists a positive constant $\epsilon_1=\epsilon_1(\psi_0,\psi_1,\psi_2,n,R,\tau,c_0,A,B)$ such that for any $\epsilon_0\in(0,\epsilon_1]$ the solution $\psi$ blows up in finite time. Furthermore, the upper bound estimate for the lifespan
\begin{align}\label{Lifespan-Result}
	T(\epsilon_0)\leqslant\begin{cases}
	C\epsilon_0^{-\frac{2}{3-n}}&\mbox{when}\ \ n=1,2,\\
	\exp(C\epsilon_0^{-1})&\mbox{when}\ \ n=3,
	\end{cases}
\end{align}
holds with a positive constant $C$.
\end{theorem}
\begin{remark}
Comparing with the viscous case carrying $b\nu (-\Delta)^{\alpha}\psi_t$, due to the lack of viscous dissipations in the nonlinear acoustic model, the local (in time) energy solution cannot be extended globally. In other words, although the linear MGT equation in the critical case is marginally stable, the solution to the nonlinear JMGT equation in the critical case blows up in finite time. Thus, this blow-up result enhances the picture of studies for the JMGT equation (see  \cite{Kaltenbacher-Lasiecka-Marchand-2011,Kaltenbacher-Lasiecka-Pos-2012} etc).
\end{remark}

\section{The linearized Cattaneo-type model involving fractional Laplacians} \label{Section 3}
As preparations for studying global (in time) behavior of the Sobolev solutions to the nonlinear viscous Cattaneo-type model \eqref{Eq-Cattaneo-Fractional}, this section contributes to investigations of the corresponding linearized Cauchy problem to the viscous Cattaneo-type model \eqref{Eq-Cattaneo-Fractional} with the vanishing right hand side, namely,
\begin{align}\label{Eq-Linear-Cattaneo-Fractional}
	\begin{cases}
(\tau\partial_t+\ml{I})(\varphi_{tt}-c_0^2\Delta\varphi+b\nu(-\Delta)^{\alpha} \varphi_t)=0,&x\in\mb{R}^n,\ t>0,\\
		\varphi(0,x)=\varphi_0(x),\ \varphi_t(0,x)=\varphi_1(x),\ \varphi_{tt}(0,x)=\varphi_2(x),&x\in\mb{R}^n,
	\end{cases}
\end{align}
with $\alpha\in[0,1]$. Throughout this section, we will always assume $0<b\nu\ll 1$, which will be implicit in all statements of Section \ref{Section 3}. In what follows,  we will derive some optimal decay estimates and asymptotic profiles of the solutions to \eqref{Eq-Linear-Cattaneo-Fractional} for three cases $\alpha\in[0,1/2)$, $\alpha\in(1/2,1]$ and $\alpha=1/2$. Note that formally taking the singular limit case $\tau=0$, the third-order (in time) PDEs \eqref{Eq-Linear-Cattaneo-Fractional} will reduce to the  structurally damped waves (see \cite{D'Abbicco-Reissig=2014,D'Abbicco-Ebert=2014,Ikehata=2014,Ikehata-Takeda=2019} and references therein). Nevertheless, some new effect from the operator $\tau\partial_t+\ml{I}$ occur not only for initial data with higher regularity, but also in large-time asymptotics of the solutions with a new threshold $\alpha=1/3$ for the linearized problem (see Remark \ref{Rem01} later). As fundamental preliminaries for studying the nonlinear viscous problem in \eqref{Eq-Cattaneo-Fractional}, we will estimate $\partial_t^j\varphi(t,\cdot)$ in some Sobolev norms for $j=0,\dots,3$. 

\subsection{Refined estimates of the solutions in the Fourier space}\label{Subsection-Refine-Fourier}

Let us apply the partial Fourier transform with respect to spatial variable $x\in\mb{R}^n$ to the linearized model \eqref{Eq-Linear-Cattaneo-Fractional}. Thus, we get
\begin{align}\label{Fourier-Eq}
\begin{cases}
(\tau\partial_t+\ml{I})(\widehat{\varphi}_{tt}+b\nu|\xi|^{2\alpha} \widehat{\varphi}_t+c_0^2|\xi|^2\widehat{\varphi})=0,&\xi\in\mb{R}^n,\ t>0,\\
\widehat{\varphi}(0,\xi)=\widehat{\varphi}_0(\xi),\ \widehat{\varphi}_t(0,\xi)=\widehat{\varphi}_1(\xi),\ \widehat{\varphi}_{tt}(0,\xi)=\widehat{\varphi}_2(\xi),&\xi\in\mb{R}^n.
\end{cases}
\end{align}
The characteristic equation of \eqref{Fourier-Eq} is the cubic equation $(\tau\lambda+1)(\lambda^2+b\nu|\xi|^{2\alpha}\lambda+c_0^2|\xi|^2)=0$, whose three roots are 
\begin{align*}
	\lambda_1=-\frac{1}{\tau}\ \ \mbox{and}\ \ \lambda_{2,3}=\frac{1}{2}\left(-b\nu|\xi|^{2\alpha}\pm\sqrt{b^2\nu^2|\xi|^{4\alpha}-4c_0^2|\xi|^2}\,\right).
\end{align*}
To understand the influence of the parameter $\alpha$ on the asymptotic behavior of the solutions (i.e., on the roots $\lambda_{2,3}$), we split our discussion into the next four cases.
\begin{description}
	\item[Case 1:] When $\alpha\in[0,1/2)$ as $\xi\in\ml{Z}_{\intt}(\varepsilon_0)$, or $\alpha\in(1/2,1]$ as $\xi\in\ml{Z}_{\extt}(N_0)$, the asymptotics  are
	\begin{align*}
		\lambda_2=-\frac{1}{b\nu}c_0^2|\xi|^{2-2\alpha}+\ml{O}(|\xi|^{4-6\alpha})\ \ \mbox{and}\ \ \lambda_3=-b\nu|\xi|^{2\alpha}+\ml{O}(|\xi|^{2-2\alpha}).
	\end{align*}
\item[Case 2:] When $\alpha\in[0,1/2)$ as $\xi\in\ml{Z}_{\extt}(N_0)$, or $\alpha\in(1/2,1]$ as $\xi\in\ml{Z}_{\intt}(\varepsilon_0)$, the asymptotics are
\begin{align*}
	\lambda_{2,3}=\pm ic_0|\xi|-\frac{b\nu}{2}|\xi|^{2\alpha}+i\ml{O}(|\xi|^{4\alpha-1}),
\end{align*}
where the remainder $\ml{O}(|\xi|^{4\alpha-1})$ is a real-valued function.
\item[Case 3:] When $\alpha\in[0,1/2)\cup(1/2,1]$ as $\xi\in\ml{Z}_{\bdd}(\varepsilon_0,N_0)$, the roots fulfill $\Re\lambda_{2,3}<0$.
\item[Case 4:]  When $\alpha=1/2$, the explicit roots are
\begin{align}\label{B4}
	\lambda_{2,3}=\frac{1}{2}\left(-b\nu\pm  i\sqrt{4c_0^2-b^2\nu^2}\,\right)|\xi|.
\end{align}
\end{description}

From the previous asymptotics for  the characteristic roots, we next discuss them one by one.  
\medskip

\noindent\underline{\textbf{Estimates in Case 1:}} Due to pairwise distinct characteristic roots, the solution to the Cauchy problem \eqref{Fourier-Eq} is expressed via
\begin{align}\label{Solution-Formula}
\widehat{\varphi}=\widehat{K}_0\widehat{\varphi}_0+\widehat{K}_1\widehat{\varphi}_1+\widehat{K}_2\widehat{\varphi}_2
\end{align}
with the kernels in the Fourier space such that
\begin{align*}
\widehat{K}_0:=\sum\limits_{j=1,2,3}\frac{\mathrm{e}^{\lambda_jt}\prod\limits_{k\neq j}\lambda_k}{\prod\limits_{k\neq j}(\lambda_j-\lambda_k)},\ \ \widehat{K}_1:=-\sum\limits_{j=1,2,3}\frac{\mathrm{e}^{\lambda_jt}\sum\limits_{k\neq j}\lambda_k}{\prod\limits_{k\neq j}(\lambda_j-\lambda_k)},\ \ \widehat{K}_2:=\sum\limits_{j=1,2,3}\frac{\mathrm{e}^{\lambda_jt}}{\prod\limits_{k\neq j}(\lambda_j-\lambda_k)},
\end{align*}
where the index $k$ runs in $\{1,2,3\}$ in the  previous formulas. Let us focus on the subcase with small frequencies firstly. By some lengthy but straightforward computations, when $\alpha\in[0,1/2)$, we may obtain the pointwise estimates 
\begin{align*}
\chi_{\intt}(\xi)|\partial_t^j\widehat{K}_0|&\lesssim\chi_{\intt}(\xi)\left(|\xi|^{(2-2\alpha)j}\mathrm{e}^{-c|\xi|^{2-2\alpha}t}+|\xi|^{2\alpha j+2-4\alpha}\mathrm{e}^{-c|\xi|^{2\alpha}t}\right),\\
\chi_{\intt}(\xi)\left(|\partial_t^j\widehat{K}_1|+|\partial_t^j\widehat{K}_2|\right)&\lesssim\chi_{\intt}(\xi)\left(|\xi|^{(2-2\alpha)j-2\alpha}\mathrm{e}^{-c|\xi|^{2-2\alpha}t}+|\xi|^{(j-1)2\alpha}\mathrm{e}^{-c|\xi|^{2\alpha}t}\right),
\end{align*} 
and the refined estimates
\begin{align*}
&\chi_{\intt}(\xi)\left(\,\left|\partial_t^j\widehat{K}_1-\frac{1}{b\nu}|\xi|^{-2\alpha}\partial_t^j\mathrm{e}^{-\frac{1}{b\nu}c_0^2|\xi|^{2-2\alpha}t}\right|+\left|\partial_t^j\widehat{K}_2-\frac{\tau}{b\nu}|\xi|^{-2\alpha}\partial_t^j\mathrm{e}^{-\frac{1}{b\nu}c_0^2|\xi|^{2-2\alpha}t}\right|\,\right)\\
&\qquad\lesssim\chi_{\intt}(\xi)\left(|\xi|^{(2-2\alpha)j+\min\{0,2-6\alpha\}}\mathrm{e}^{-c|\xi|^{2-2\alpha}t}+|\xi|^{(j-1)2\alpha}\mathrm{e}^{-c|\xi|^{2\alpha}t}\right),
\end{align*} 
with $j=0,\dots,3$, where the competition comes from the error estimates of $\partial_t^j\widehat{K}_1$ as follows:
\begin{align*} 
&\chi_{\intt}(\xi)\left|\frac{\partial_t^j\mathrm{e}^{-\frac{1}{b\nu}c_0^2|\xi|^{2-2\alpha}t+\ml{O}(|\xi|^{4-6\alpha})t}\left(\frac{1}{\tau}+\ml{O}(|\xi|^{2\alpha})\right)}{\frac{b\nu}{\tau}|\xi|^{2\alpha}+\ml{O}(|\xi|^{2-2\alpha})}
-\frac{\partial_t^j\mathrm{e}^{-\frac{1}{b\nu}c_0^2|\xi|^{2-2\alpha}t}\frac{1}{\tau}}{\frac{b\nu}{\tau}|\xi|^{2\alpha}}
 \right|\\
 &\qquad\lesssim\chi_{\intt}(\xi)|\xi|^{(2-2\alpha)j}\mathrm{e}^{-c|\xi|^{2-2\alpha}t}\left(1+|\xi|^{2-6\alpha}+|\xi|^{-2\alpha}\ml{O}(|\xi|^{4-6\alpha})t\int_0^1\mathrm{e}^{\ml{O}(|\xi|^{4-6\alpha})t\tau}\mathrm{d}\tau\right)\\
 &\qquad\lesssim \chi_{\intt}(\xi)|\xi|^{(2-2\alpha)j+\min\{0,2-6\alpha\}}\mathrm{e}^{-c|\xi|^{2-2\alpha}t}.
\end{align*}
The combinations of previous estimates and the representation \eqref{Solution-Formula} lead to
\begin{align}\label{Est-01}
	\chi_{\intt}(\xi)|\partial_t^j\widehat{\varphi}|\lesssim\chi_{\intt}(\xi)\left(|\xi|^{(2-2\alpha)j-2\alpha}\mathrm{e}^{-c|\xi|^{2-2\alpha}t}+|\xi|^{(j-1)2\alpha}\mathrm{e}^{-c|\xi|^{2\alpha} t}\right)\left(|\widehat{\varphi}_0|+|\widehat{\varphi}_1|+|\widehat{\varphi}_2|\right)
\end{align}
and
\begin{align*}
&\chi_{\intt}(\xi)\left|\partial_t^j\widehat{\varphi}-\frac{1}{b\nu}|\xi|^{-2\alpha}\partial_t^j\mathrm{e}^{-\frac{1}{b\nu}c_0^2|\xi|^{2-2\alpha}t}\left(\widehat{\varphi}_1+\tau\widehat{\varphi}_2\right)\right|\\
&\qquad\lesssim\chi_{\intt}(\xi)\left(|\xi|^{(2-2\alpha)j+\min\{0,2-6\alpha\}}\mathrm{e}^{-c|\xi|^{2-2\alpha}t}+|\xi|^{(j-1)2\alpha}\mathrm{e}^{-c|\xi|^{2\alpha}t}\right)\left(|\widehat{\varphi}_0|+|\widehat{\varphi}_1|+|\widehat{\varphi}_2|\right)
\end{align*}
with $j=0,\dots,3$. For the other subcase $\alpha\in(1/2,1]$ with large frequencies, we may derive
\begin{align}
	\chi_{\extt}(\xi)|\widehat{\varphi}|&\lesssim\chi_{\extt}(\xi)\mathrm{e}^{-ct}\left(|\widehat{\varphi}_0|+|\xi|^{2\alpha-2}|\widehat{\varphi}_1|+|\xi|^{-2}|\widehat{\varphi}_2|\right),\notag\\
	\chi_{\extt}(\xi)|\partial_t\widehat{\varphi}|&\lesssim\chi_{\extt}(\xi)\mathrm{e}^{-ct}\left(|\widehat{\varphi}_0|+|\widehat{\varphi}_1|+|\xi|^{-2\alpha}|\widehat{\varphi}_2|\right),\label{A3}\\
	\chi_{\extt}(\xi)|\partial_t^2\widehat{\varphi}|&\lesssim\chi_{\extt}(\xi)\mathrm{e}^{-ct}\left(|\xi|^{2-2\alpha}|\widehat{\varphi}_0|+|\xi|^{2-2\alpha}|\widehat{\varphi}_1|+|\widehat{\varphi}_2|\right),\notag\\
	\chi_{\extt}(\xi)|\partial_t^3\widehat{\varphi}|&\lesssim\chi_{\extt}(\xi)\mathrm{e}^{-ct}\left(|\xi|^2|\widehat{\varphi}_0|+|\xi|^2|\widehat{\varphi}_1|+|\xi|^{2\alpha}|\widehat{\varphi}_2|\right).\notag
\end{align}

\medskip
\noindent\underline{\textbf{Estimates in Case 2:}} Since $\lambda_{2,3}$ are complex conjugate roots, they can be re-expressed as $\lambda_{2,3}=\lambda_{\mathrm{R}}\pm i\lambda_{\mathrm{I}}$ with
\begin{align}\label{A1}
	\lambda_{\mathrm{R}}=-\frac{b\nu}{2}|\xi|^{2\alpha}\ \ \mbox{and}\ \ \lambda_{\mathrm{I}}=c_0|\xi|+\ml{O}(|\xi|^{4\alpha-1}).
\end{align}
Plugging the last expressions into \eqref{Solution-Formula}, we have
%
\begin{align}
\widehat{\varphi}&=\frac{(\lambda_{\mathrm{I}}^2+\lambda_{\mathrm{R}}^2)\widehat{\varphi}_0-2\lambda_{\mathrm{R}}\widehat{\varphi}_1+\widehat{\varphi}_2}{(\lambda_1-\lambda_{\mathrm{R}})^2+\lambda_{\mathrm{I}}^2}\mathrm{e}^{\lambda_1t}+\frac{(\lambda_1-2\lambda_{\mathrm{R}})\lambda_1\widehat{\varphi}_0+2\lambda_{\mathrm{R}}\widehat{\varphi}_1-\widehat{\varphi}_2}{(\lambda_1-\lambda_{\mathrm{R}})^2+\lambda_{\mathrm{I}}^2}\cos(\lambda_{\mathrm{I}}t)\mathrm{e}^{\lambda_{\mathrm{R}}t}\notag\\
&\quad+\frac{\lambda_1\left(\lambda_{\mathrm{R}}(\lambda_{\mathrm{R}}-\lambda_1)-\lambda_{\mathrm{I}}^2\right)\widehat{\varphi}_0+(\lambda_1^2-\lambda_{\mathrm{R}}^2+\lambda_{\mathrm{I}}^2)\widehat{\varphi}_1+(\lambda_{\mathrm{R}}-\lambda_1)\widehat{\varphi}_2}{\lambda_{\mathrm{I}}((\lambda_1-\lambda_{\mathrm{R}})^2+\lambda_{\mathrm{I}}^2)}\sin(\lambda_{\mathrm{I}}t)\mathrm{e}^{\lambda_{\mathrm{R}}t}.\label{Solution-Formula-2}
\end{align} 
Applying \eqref{A1} for $\xi\in\ml{Z}_{\intt}(\varepsilon_0)$ and $\alpha\in(1/2,1]$, one may obtain
\begin{align*}
\chi_{\intt}(\xi)|\partial_t^j\widehat{\varphi}|\lesssim\chi_{\intt}(\xi)|\xi|^{j}\left(1+\frac{|\sin(c_0|\xi|t)|}{c_0|\xi|}\right)\mathrm{e}^{-c|\xi|^{2\alpha}t}\left(|\widehat{\varphi}_0|+|\widehat{\varphi}_1|+|\widehat{\varphi}_2|\right)
\end{align*} 
and the refined estimates
\begin{align}\label{Est-03} 
\chi_{\intt}(\xi)\left|\partial_t^j\widehat{\varphi}-\frac{\partial_t^j\sin(c_0|\xi|t)}{c_0|\xi|}\mathrm{e}^{-\frac{b\nu}{2}|\xi|^{2\alpha}t}\left(\widehat{\varphi}_1+\tau\widehat{\varphi}_2\right) \right|&\lesssim\chi_{\intt}(\xi)|\xi|^{2\alpha-2+j}\mathrm{e}^{-c|\xi|^{2\alpha}t}\left(|\widehat{\varphi}_0|+|\widehat{\varphi}_1|+|\widehat{\varphi}_2|\right)
\end{align} 
with $j=0,\dots,3$, in which we used
\begin{align*}
\chi_{\intt}(\xi)\left|\frac{-\lambda_1^2}{\lambda_{\mathrm{I}}(2\lambda_{\mathrm{R}}\lambda_1-\lambda_{\mathrm{I}}^2-\lambda_{\mathrm{R}}^2-\lambda_1^2)}-\frac{1}{c_0|\xi|}\right|&\lesssim \chi_{\intt}(\xi)|\xi|^{4\alpha-3},\\
\chi_{\intt}(\xi)|\sin(\lambda_{\mathrm{I}}t)-\sin(c_0|\xi|t)|&\lesssim\chi_{\intt}(\xi)|\xi|^{4\alpha-1}t,
\end{align*}
in the error estimates. For $\xi\in\ml{Z}_{\extt}(N_0)$ and $\alpha\in[0,1/2)$, with $j=0,\dots,3$, we have
\begin{align}\label{Est-large}
	\chi_{\extt}(\xi)|\partial_t^j\widehat{\varphi}|&\lesssim\chi_{\extt}(\xi)\mathrm{e}^{-ct}\left(|\xi|^{\max\{j-1,0\}}|\widehat{\varphi}_0|+|\xi|^{j-1}|\widehat{\varphi}_1|+|\xi|^{j-2}|\widehat{\varphi}_2|\right).
\end{align}

\medskip
\noindent\underline{\textbf{Estimates in Case 3:}}
	 Since $\xi\not\in \ml{Z}_{\intt}(\varepsilon_0)\cup \ml{Z}_{\extt}(N_0)$, we have exponential decay
\begin{align}\label{Est-02}
	\chi_{\bdd}(\xi)|\partial_t^j\widehat{\varphi}|\lesssim\chi_{\bdd}(\xi)\mathrm{e}^{-ct}\left(|\widehat{\varphi}_0|+|\widehat{\varphi}_1|+|\widehat{\varphi}_2|\right)
\end{align}
with $j=0,\dots,3$.

\medskip
\noindent\underline{\textbf{Estimates in Case 4:}} By plugging the expressions
\begin{align*}
	\lambda_{\mathrm{R}}=-\frac{b\nu}{2}|\xi|\ \ \mbox{and}\ \ \lambda_{\mathrm{I}}=\frac{1}{2}\sqrt{4c_0^2-b^2\nu^2}|\xi|
\end{align*}
into the representation \eqref{Solution-Formula-2}, we consequently find
\begin{align*}
\chi_{\intt}(\xi)|\partial_t^j\widehat{\varphi}|\lesssim\chi_{\intt}(\xi)|\xi|^j\left(1+\frac{|\sin(\frac{1}{2}\sqrt{4c_0^2-b^2\nu^2}|\xi|t)|}{\frac{1}{2}\sqrt{4c_0^2-b^2\nu^2}|\xi|}\right)\mathrm{e}^{-c|\xi|t}\left(|\widehat{\varphi}_0|+|\widehat{\varphi}_1|+|\widehat{\varphi}_2|\right)
\end{align*}
and the error estimates
\begin{align*}
\chi_{\intt}(\xi)\left|\partial_t^j\widehat{\varphi}-\frac{\partial_t^j\left(\sin(\frac{1}{2}\sqrt{4c_0^2-b^2\nu^2}|\xi|t)\mathrm{e}^{-\frac{b\nu}{2}|\xi|t}\right)}{\frac{1}{2}\sqrt{4c_0^2-b^2\nu^2}|\xi|}\left(\widehat{\varphi}_1+\tau\widehat{\varphi}_2\right) \right|\lesssim\chi_{\intt}(\xi)|\xi|^j\mathrm{e}^{-c|\xi|t}\left(|\widehat{\varphi}_0|+|\widehat{\varphi}_1|+|\widehat{\varphi}_2|\right)
\end{align*}
with $j=0,\dots,3$. Additionally, the solution for $\alpha=1/2$ and for large frequencies fulfills \eqref{Est-large} as well.

\subsection{Decay properties and asymptotic profiles of the solutions}


Since we cannot apply the classical theory for operators of Kovalevskian-type or of Petrovskian-type to \eqref{Eq-Linear-Cattaneo-Fractional}, it is not obvious how the regularities for the Cauchy data and for the solution to \eqref{Eq-Linear-Cattaneo-Fractional} are related. Following the approach employed to prove the existence of Sobolev solutions to the classical free wave equation (e.g. \cite[Chapter 14]{Ebert-Reissig-book}), since the regularities for $\varphi, \partial_t \varphi,\partial_t^2 \varphi$ are provided by the estimates for large frequencies (that is, \eqref{A3} and \eqref{Est-large} for $\alpha\in (1/2,1]$ and for $\alpha\in [0,1/2]$, respectively),  one can straightforwardly show the next result.

\begin{prop}\label{PROP-EXIS}
Let $\alpha\in[0,1]$ and $n\geqslant 2$. Let us consider the linearized viscous problem \eqref{Eq-Linear-Cattaneo-Fractional} with $(\varphi_0,\varphi_1,\varphi_2)\in H^{s+2}\times H^{s+\max\{1,2\alpha\}}\times H^s$. Then, there is a uniquely determined Sobolev solution
\begin{align}\label{D2}
	\varphi\in\ml{C}([0,\infty),H^{s+2})\cap \ml{C}^1([0,\infty),H^{s+\max\{1,2\alpha\}})\cap \ml{C}^2([0,\infty),H^s)
\end{align}
with any $s\geqslant0$.
\end{prop}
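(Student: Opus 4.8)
The plan is to exploit the explicit Fourier representation \eqref{Solution-Formula} and to reduce the entire statement to the pointwise (in $\xi$) kernel bounds already assembled in Section \ref{Subsection-Refine-Fourier}, transferring them to Sobolev norms through Plancherel's theorem.

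First I would note that, after the partial Fourier transform, \eqref{Eq-Linear-Cattaneo-Fractional} becomes the family of linear third-order ODEs \eqref{Fourier-Eq} parametrized by $\xi$. For each fixed $\xi$ this initial value problem has a unique solution, namely \eqref{Solution-Formula}; hence $\widehat{\varphi}$ is uniquely determined, and injectivity of $\ml{F}$ gives uniqueness of $\varphi$ in the stated class. The frequency-independent root $\lambda_1=-1/\tau$ contributes only a harmless factor $\mathrm{e}^{-t/\tau}$, so the regularity of the solution is entirely governed by $\lambda_{2,3}$, whose large-frequency behaviour switches between oscillatory (for $\alpha\in[0,1/2]$) and real (for $\alpha\in(1/2,1]$). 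This is precisely why the analysis is neither Kovalevskian nor Petrovsky type, and why the admissible orders in \eqref{D2} are dictated by the top frequencies.

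For existence I would split the Fourier space $\mb{R}^n$ into the three zones $\ml{Z}_{\intt}(\varepsilon_0)$, $\ml{Z}_{\bdd}(\varepsilon_0,N_0)$, $\ml{Z}_{\extt}(N_0)$ and estimate $\|\partial_t^j\varphi(t,\cdot)\|_{H^{\sigma_j}}$ zone by zone, with $\sigma_0=s+2$, $\sigma_1=s+\max\{1,2\alpha\}$ and $\sigma_2=s$. On $\ml{Z}_{\intt}$ the small-frequency bounds of Cases 1, 2 and 4, and on $\ml{Z}_{\bdd}$ the exponential bound \eqref{Est-02}, lose no regularity (indeed they smooth); the decisive contributions are on $\ml{Z}_{\extt}$, where \eqref{A3} (for $\alpha\in(1/2,1]$) and \eqref{Est-large} (for $\alpha\in[0,1/2]$) govern the admissible orders. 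A direct inspection shows that for $j=0,1,2$ these large-frequency bounds absorb the weights $|\xi|^{\sigma_j}$ into the data norms $\|\varphi_0\|_{H^{s+2}}$, $\|\varphi_1\|_{H^{s+\max\{1,2\alpha\}}}$ and $\|\varphi_2\|_{H^s}$, which is exactly the regularity structure \eqref{D2}.

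The remaining point is to upgrade these uniform-in-$t$ bounds to membership in $\ml{C}^j([0,\infty),H^{\sigma_j})$. Here I would follow the free-wave scheme of \cite[Chapter 14]{Ebert-Reissig-book}: the kernels $\partial_t^j\widehat{K}_\ell$ are continuous in $t$ for each $\xi$, and their increments $\partial_t^j\widehat{K}_\ell(t+h,\xi)-\partial_t^j\widehat{K}_\ell(t,\xi)$ are controlled by the mean value theorem applied to the exponentials $\mathrm{e}^{\lambda_m t}$, producing a factor $|h|$ times a bound of the same order as above; dominated convergence in the Plancherel integral then yields continuity. I expect the principal obstacle to be the top derivative $\partial_t^2\varphi\in\ml{C}([0,\infty),H^s)$, where the third-order-in-time structure and the $\alpha$-dependent large-frequency behaviour of $\lambda_{2,3}$ interact most delicately: one must check that the $j=2$ rows of \eqref{A3} and \eqref{Est-large} really close with only $\varphi_2\in H^s$ and no additional derivative loss, which is the heart of the bookkeeping in those pointwise estimates.
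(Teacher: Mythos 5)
Your proposal is correct and follows essentially the same route the paper takes: the paper's own ``proof'' is just the one-sentence remark preceding Proposition \ref{PROP-EXIS}, which invokes exactly your three ingredients --- the large-frequency bounds \eqref{A3} and \eqref{Est-large}, the free-wave existence scheme of \cite[Chapter 14]{Ebert-Reissig-book}, and the mean value theorem applied to the kernel representations. Your zone-by-zone bookkeeping (which correctly identifies $\ml{Z}_{\extt}$ as the zone dictating the orders $s+2$, $s+\max\{1,2\alpha\}$, $s$) simply fills in the details the paper leaves to the reader.
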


Let us recall some decay estimates for two kinds of Fourier multipliers in the $L^2$ space (see \cite{Ikehata=2014,Ikehata-Takeda=2019}).  
\begin{lemma}\label{Lemma-01}
Let $\beta>0$ and $2s+n>0$. The following two sided estimate holds:
\begin{align*}
\ml{R}_0(t):=\left\|\chi_{\intt}(\xi)\widehat{g}_0(t,|\xi|)|\xi|^s\mathrm{e}^{-c|\xi|^{\beta}t}\right\|_{L^2}\simeq t^{-\frac{2s+n}{2\beta}}
\end{align*}
for $t\gg1$, where either for $\widehat{g}_0(t,|\xi|)=1$ or for $\widehat{g}_0(t,|\xi|)=\sin(C|\xi|t)$.   Moreover,  $\ml{R}_0(t)\lesssim 1$ holds for $t\leqslant 1$.
\end{lemma}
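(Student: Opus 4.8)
The plan is to square the $L^2$ norm, use the radial symmetry of all factors (we may take $\chi_{\intt}$ radial, so that $\chi_{\intt}(\xi)=\chi_{\intt}(r)$ with $r=|\xi|$) to reduce to a one-dimensional radial integral, and then apply the self-similar substitution $\eta=r^{\beta}t$ which exposes the rate $t^{-(2s+n)/\beta}$. Writing $\omega_{n-1}$ for the surface measure of $\mb{S}^{n-1}$ and abbreviating $a:=2s+n>0$, polar coordinates give
\begin{align*}
\left\|\chi_{\intt}(\xi)\widehat{g}_0(t,|\xi|)|\xi|^s\mathrm{e}^{-c|\xi|^{\beta}t}\right\|_{L^2}^2
=\omega_{n-1}\int_0^{\varepsilon_0}\chi_{\intt}(r)^2\,|\widehat{g}_0(t,r)|^2\, r^{a-1}\mathrm{e}^{-2cr^{\beta}t}\,\mathrm{d}r ,
\end{align*}
and since $r^{a-1}\mathrm{d}r=\tfrac{1}{\beta}t^{-a/\beta}\eta^{a/\beta-1}\mathrm{d}\eta$ the radial integral becomes
\begin{align*}
\frac{\omega_{n-1}}{\beta}\,t^{-\frac{a}{\beta}}\int_0^{\varepsilon_0^{\beta}t}\chi_{\intt}\!\left((\eta/t)^{1/\beta}\right)^2\big|\widehat{g}_0\big(t,(\eta/t)^{1/\beta}\big)\big|^2\,\eta^{\frac{a}{\beta}-1}\mathrm{e}^{-2c\eta}\,\mathrm{d}\eta .
\end{align*}

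For the upper bound I would estimate $\chi_{\intt}\le 1$ and $|\widehat{g}_0|\le 1$ (valid for both $\widehat{g}_0\equiv 1$ and $\widehat{g}_0=\sin(C|\xi|t)$), so that the last integral is dominated by $\int_0^{\infty}\eta^{a/\beta-1}\mathrm{e}^{-2c\eta}\,\mathrm{d}\eta=(2c)^{-a/\beta}\Gamma(a/\beta)<\infty$ because $a/\beta>0$. This yields $\lesssim t^{-a/\beta}$ for the squared norm, hence $\lesssim t^{-\frac{2s+n}{2\beta}}$ after taking the square root. The same condition $a>0$ makes $\int_0^{\varepsilon_0}r^{a-1}\mathrm{d}r$ finite, which gives the claimed boundedness of the multiplier for small time. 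For the lower bound with $\widehat{g}_0\equiv 1$ I would restrict the integral to the smaller ball $\{|\xi|\le\delta\}$ on which $\chi_{\intt}\equiv 1$ (with $\delta\simeq\varepsilon_0$); there the rescaled integral equals $\tfrac{\omega_{n-1}}{\beta}t^{-a/\beta}\int_0^{\delta^{\beta}t}\eta^{a/\beta-1}\mathrm{e}^{-2c\eta}\,\mathrm{d}\eta$, and since $\delta^{\beta}t\to\infty$ this incomplete Gamma integral increases to the strictly positive constant $(2c)^{-a/\beta}\Gamma(a/\beta)$. Hence for $t\gg1$ the squared norm is $\gtrsim t^{-a/\beta}$, matching the upper bound.

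The delicate case, and the main obstacle, is the lower bound for $\widehat{g}_0=\sin(C|\xi|t)$, where the oscillation could in principle cancel the leading contribution. Here I would use $\sin^2(C|\xi|t)=\tfrac12\big(1-\cos(2C|\xi|t)\big)$ to split the squared norm into the $\widehat{g}_0\equiv 1$ integral just bounded below, minus an oscillatory remainder $\tfrac12\int\chi_{\intt}^2\cos(2C|\xi|t)|\xi|^{2s}\mathrm{e}^{-2c|\xi|^{\beta}t}\,\mathrm{d}\xi$. After the same substitution the phase is $2C\,t^{1-1/\beta}\eta^{1/\beta}$, and in the range $\beta>1$ arising for the sine factor its frequency $t^{1-1/\beta}\to\infty$; since $\eta^{1/\beta}$ is strictly monotone the substitution $u=\eta^{1/\beta}$ reduces the remainder to a standard Riemann–Lebesgue integral, which (with dominated convergence controlling the truncation and the factor $\chi_{\intt}((\eta/t)^{1/\beta})^2\to1$) is $o(t^{-a/\beta})$, strictly subordinate to the main term.

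For the borderline $\beta=1$ (relevant to $\alpha=1/2$) the remainder is of the same order $t^{-a}$, so Riemann–Lebesgue is not enough and I would instead evaluate both constants explicitly: the combined leading coefficient equals
\begin{align*}
\frac{\omega_{n-1}}{2}\Big[(2c)^{-a}\Gamma(a)-\Gamma(a)\,\mathrm{Re}\,(2c-2iC)^{-a}\Big],
\end{align*}
which is strictly positive because $\mathrm{Re}\,(2c-2iC)^{-a}\le|2c-2iC|^{-a}=(4c^2+4C^2)^{-a/2}<(2c)^{-a}$ as soon as $C\neq 0$. In either regime the leading constant is therefore positive, so the squared norm is $\gtrsim t^{-a/\beta}$ for $t\gg1$, which gives $\gtrsim t^{-\frac{2s+n}{2\beta}}$ and, together with the matching upper bound, the two-sided estimate $\simeq t^{-\frac{2s+n}{2\beta}}$.
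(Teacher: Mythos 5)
Your proposal is correct and follows essentially the route the paper attributes to its references \cite{Ikehata=2014,Ikehata-Takeda=2019}: reduction by polar coordinates and the self-similar substitution $\eta=|\xi|^{\beta}t$ for the two-sided bounds, with the Riemann--Lebesgue theorem disposing of the oscillatory term in the lower bound. Your separate explicit Gamma-function computation for the borderline $\beta=1$ (where the rescaled phase no longer oscillates rapidly and Riemann--Lebesgue alone is inconclusive) is a genuine and necessary supplement for the paper's application to the profile $\ml{G}_{3,j}$ at $\alpha=1/2$, and the positivity argument $\mathrm{Re}\,(2c-2iC)^{-a}<(2c)^{-a}$ for $C\neq0$ is sound.
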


Now, we derive some $(L^2\cap L^1)-L^2$ and $L^2-L^2$ estimates for the solutions and their time-derivatives, considering three different subcases for the range of  $\alpha$.
\medskip

\noindent\underline{\bf{1st case: $\alpha\in[0,1/2)$}}

\begin{prop}\label{PROP-01}Let $\alpha\in[0,1/2)$ and $n\geqslant 2$. Let us consider the linearized viscous problem \eqref{Eq-Linear-Cattaneo-Fractional} with $(\varphi_0,\varphi_1,\varphi_2)\in \ml{A}_{s,j}^{(1)}$. Then, $\partial_t^j \varphi$ fulfills the upper bound  estimates
	 \begin{align}
	 \|\partial_t^j\varphi(t,\cdot)\|_{\dot{H}^{s+2-j}}&\lesssim (1+t)^{-(j+1)-\frac{2s+n-2j}{2(2-2\alpha)}}\|(\varphi_0,\varphi_1,\varphi_2)\|_{ \ml{A}_{s,j}^{(1)}},\label{A2}\\
\left\|\left(\partial_t^j\varphi-\ml{G}_{1,j}P_{\varphi_1+\tau\varphi_2}\right)(t,\cdot)\right\|_{\dot{H}^{s+2-j}}&=o\left(t^{-(j+1)-\frac{2s+n-2j}{2(2-2\alpha)}}\right),\notag
\end{align}
where the profiles $\ml{G}_{1,j}$ and the spaces $\ml{A}_{s,j}^{(1)}$ were introduced in \eqref{mlG1} and \eqref{A1-SPACE}, respectively. Moreover, the upper bound estimates in \eqref{A2} are optimal since the following lower bound estimates hold:
\begin{align*}
\|\partial_t^j\varphi(t,\cdot)\|_{\dot{H}^{s+2-j}}\gtrsim t^{-(j+1)-\frac{2s+n-2j}{2(2-2\alpha)}}|P_{\varphi_1+\tau\varphi_2}|
\end{align*}
for $t\gg1$, provided that $|P_{\varphi_1+\tau\varphi_2}|\neq0$. Here, we take either $s=j-2$ or $s\geqslant0$ with $j=0,1,2$.
\end{prop}
\begin{proof} Let $s=j-2$ or $s\geqslant 0$ with $j=0,1,2$.
By  the Hausdorff-Young inequality and Lemma \ref{Lemma-01}, we may have
\begin{align*}
\|\partial_t^j\varphi(t,\cdot)\|_{\dot{H}^{s+2-j}}&\lesssim \|\chi_{\intt}(\xi)|\xi|^{s+2-j}\partial_t^j\widehat{\varphi}(t,\xi)\|_{L^2}+\|(1-\chi_{\intt}(\xi))|\xi|^{s+2-j}\partial_t^j\widehat{\varphi}(t,\xi)\|_{L^2}\\
&\lesssim\left\|\chi_{\intt}(\xi)|\xi|^{(2-2\alpha)j-2\alpha+s+2-j}\mathrm{e}^{-c|\xi|^{2-2\alpha}t}\right\|_{L^2}\|(\widehat{\varphi}_0,\widehat{\varphi}_1,\widehat{\varphi}_2)\|_{(L^{\infty})^3}\\
&\quad+\mathrm{e}^{-ct}\|(1-\chi_{\intt}(D))|D|^{s+2-j}(\varphi_0,\varphi_1,\varphi_2)\|_{\dot{H}^{\max\{j-1,0\}}\times \dot{H}^{j-1}\times \dot{H}^{j-2}}\\
&\lesssim (1+t)^{-(j+1)-\frac{2s+n-2j}{2(2-2\alpha)}}\|(\varphi_0,\varphi_1,\varphi_2)\|_{\ml{A}_{s,j}^{(1)}},
\end{align*}
where we used \eqref{Est-01}, \eqref{Est-02} and \eqref{Est-large} in the second inequality and Lemma \ref{Lemma-01} in the third inequality. In particular, in this step we have to require the restriction $n\geqslant 2>4\alpha$ for the spatial dimension when $s=j-2$ and $j=0$. Similarly, we arrive at the error estimates
\begin{align}\label{err-01}
%
\left\|\partial_t^j\varphi (t,\cdot)-\ml{G}_{1,j}(t,|D|)(\varphi_1+\tau\varphi_2)\right\|_{\dot{H}^{s+2-j}} \color{black}\lesssim (1+t)^{-(j+1)-\frac{2s+n-2j}{2(2-2\alpha)}-\frac{\min\{\alpha,1-2\alpha\}}{1-\alpha}}\|(\varphi_0,\varphi_1,\varphi_2)\|_{\ml{A}_{s,j}^{(1)}}.
\end{align}
Considering the Fourier multiplier with the symbol $\widehat{g}_1(t,\xi):=\ml{F}_{x\to\xi}(g_1(t,x))$ acting on the function $f_0=f_0(x)$, by using the Lagrange theorem
\begin{align*}
	|g_1(t,x-y)-g_1(t,x)|\lesssim|y|\,|\nabla g_1(t,x-\theta_1y)|
\end{align*}
with $\theta_1\in(0,1)$, one notices
\begin{align}
\|g_1(t,D)f_0(\cdot)-g_1(t,\cdot)P_{f_0}\|_{L^2}&\lesssim\left\|\int_{|y|\leqslant t^{\beta_1}}\big(g_1(t,\cdot-y)-g_1(t,\cdot)\big)f_0(y)\mathrm{d}y\right\|_{L^2}\notag\\
&\quad+\left\|\int_{|y|\geqslant t^{\beta_1}}\big(|g_1(t,\cdot-y)|+|g_1(t,\cdot)|\big)|f_0(y)|\mathrm{d}y\right\|_{L^2}\notag\\
&\lesssim t^{\beta_1}\|\nabla g_1(t,\cdot)\|_{L^2}\|f_0\|_{L^1}+\|g_1(t,\cdot)\|_{L^2}\|f_0\|_{L^1(|x|\geqslant t^{\beta_1})}\label{Est-04}.
\end{align}
By taking $\widehat{g}_1(t,\xi)=\widehat{\ml{G}}_{1,j}(t,|\xi|)$, and
\begin{align*}
	f_0=\varphi_1+\tau\varphi_2\in L^1\ \ \mbox{so that}\ \ \|\varphi_1+\tau\varphi_2\|_{L^1(|x|\geqslant t^{\beta_1})}=o(1)  \ \ \mbox{as} \ \ t\to+\infty,
\end{align*} we are able to deduce
\begin{align}\label{err-02}
	&\|\ml{G}_{1,j}(t,|D|)(\varphi_1+\tau\varphi_2)(\cdot)-\ml{G}_{1,j}(t,\cdot)P_{\varphi_1+\tau\varphi_2}\|_{\dot{H}^{s+2-j}}\notag\\
	&\qquad\lesssim t^{\beta_1}\left\|\chi_{\intt}(\xi)|\xi|^{3-2\alpha+s-j}\partial_t^j\mathrm{e}^{-c|\xi|^{2-2\alpha}t}\right\|_{L^2}\|\varphi_1+\tau\varphi_2\|_{L^1}\notag\\
	&\qquad\quad+\left\|\chi_{\intt}(\xi)|\xi|^{2-2\alpha+s-j}\partial_t^j\mathrm{e}^{-c|\xi|^{2-2\alpha}t}\right\|_{L^2}\|\varphi_1+\tau\varphi_2\|_{L^1(|x|\geqslant t^{\beta_1})}\notag\\
	&\qquad=o\left(t^{-(j+1)-\frac{2s+n-2j}{2(2-2\alpha)}}\right)
\end{align}
as $t\gg1$ with $j=0,1,2$, where we chose $\beta_1$ to be a small constant. Combining \eqref{err-01} and \eqref{err-02} it results
\begin{align*}
	\left\|\left(\partial_t^j\varphi-\ml{G}_{1,j}P_{\varphi_1+\tau\varphi_2}\right)(t,\cdot)\right\|_{\dot{H}^{s+2-j}}=o\left(t^{-(j+1)-\frac{2s+n-2j}{2(2-2\alpha)}}\right).
\end{align*}
 For $t\gg1$, the Minkowski inequality implies that
\begin{align*}
\|\partial_t^j\varphi(t,\cdot)\|_{\dot{H}^{s+2-j}}&\gtrsim \|\ml{G}_{1,j}(t,\cdot)\|_{\dot{H}^{s+2-j}}|P_{\varphi_1+\tau\varphi_2}|-\left\|\left(\partial_t^j\varphi-\ml{G}_{1,j}P_{\varphi_1+\tau\varphi_2}\right)(t,\cdot)\right\|_{\dot{H}^{s+2-j}}\\
&\gtrsim t^{-(j+1)-\frac{2s+n-2j}{2(2-2\alpha)}}|P_{\varphi_1+\tau\varphi_2}|- o\left(t^{-(j+1)-\frac{2s+n-2j}{2(2-2\alpha)}}\right)\\
&\gtrsim t^{-(j+1)-\frac{2s+n-2j}{2(2-2\alpha)}}|P_{\varphi_1+\tau\varphi_2}|,
\end{align*}
 where we employed the lower bound  estimates in Lemma \ref{Lemma-01} to control $\|\,|D|^{s+2-j}\ml{G}_{1,j}(t,\cdot)\|_{L^2}$ from the below.
\end{proof}
\begin{remark}\label{Rem01}
In the same setting of Proposition \ref{PROP-01},	if we assume additionally $\varphi_1+\tau\varphi_2\in L^{1,1}$, from \cite[Lemma 2.2]{Ikehata=2014}, we get
	\begin{align*}
		\left|\widehat{\varphi_1+\tau\varphi_2}-P_{\varphi_1+\tau\varphi_2}\right|\lesssim|\xi|\,\|\varphi_1+\tau\varphi_2\|_{L^{1,1}},
	\end{align*}
	which helps us to improve \eqref{err-02} by
	\begin{align*}
		\|\ml{G}_{1,j}(t,|D|)(\varphi_1+\tau\varphi_2)(\cdot)-\ml{G}_{1,j}(t,\cdot)P_{\varphi_1+\tau\varphi_2}\|_{\dot{H}^{s+2-j}}\lesssim(1+t)^{-(j+1)-\frac{2s+n-2j}{2(2-2\alpha)}-\frac{1}{2-2\alpha}}\|\varphi_1+\tau\varphi_2\|_{L^{1,1}}.
	\end{align*}
	Combining these last estimates with  \eqref{err-01}, we immediately arrive at
	\begin{align}\label{B1}
		&\left\|\left(\partial_t^j\varphi-\ml{G}_{1,j}P_{\varphi_1+\tau\varphi_2}\right)(t,\cdot)\right\|_{\dot{H}^{s+2-j}}\notag\\
		&\qquad\lesssim(1+t)^{-(j+1)-\frac{2s+n-2j}{2(2-2\alpha)}-\frac{\min\{\alpha,1-2\alpha\}}{1-\alpha}}\left(\|(\varphi_0,\varphi_1,\varphi_2)\|_{\ml{A}_{s,j}^{(1)}}+\|\varphi_1+\tau\varphi_2\|_{L^{1,1}}\right)
	\end{align}
	for either $s=j-2$ or $s\geqslant0$ with $j=0,1,2$. Comparing with the estimates in \eqref{B1} with those in \eqref{A2},  we see that by subtracting the profiles $\ml{G}_{1,j}P_{\varphi_1+\tau\varphi_2}$ to $\partial_t^j \varphi$ the decay rates in \eqref{A2} have been improved with an additional decay of magnitude $-\frac{\min\{\alpha,1-2\alpha\}}{1-\alpha}$ when $\alpha\in[0,1/2)$. This is a new effect, which does not occur  in the singular limit case $\tau=0$, and that provides a new threshold $\alpha=1/3$ for the linearized viscous problem \eqref{Eq-Linear-Cattaneo-Fractional}. This phenomenon, a different improvement in the decay rates in the sub-ranges $\alpha\in [0,1/3)$ and $\alpha\in (1/3,1/2)$, suggests the presence of two different long-time second-order profiles for $\partial_t^j \varphi$ depending on the value of $\alpha$.
\end{remark}
\begin{coro}\label{CORO-01}Let $\alpha\in[0,1/2)$ and $n\geqslant 2$. Let us consider the linearized viscous problem \eqref{Eq-Linear-Cattaneo-Fractional} with $\varphi_0=\varphi_1=0$ and $\varphi_2\in\dot{H}^s$. Then,  the functions $\partial_t^j\varphi$ fulfill the upper bound  estimates
	\begin{align*}
	\|\partial_t^j\varphi(t,\cdot)\|_{\dot{H}^{s+2-j}}&\lesssim (1+t)^{-(j+1)+\frac{j}{2-2\alpha}}\|\varphi_2\|_{\dot{H}^s},
	\end{align*}
	where we take either $s=j-2$ or $s\geqslant0$ with $j=0,1,2$. Additionally,  we have the $L^2-L^2$ estimates
\begin{align}\label{extra L2 L2 estimate alpha< 1/2}
\| \partial_t^j \varphi (t,\cdot)\|_{L^2}\lesssim (1+t)^{1-j}\|\varphi_2\|_{L^2}
\end{align}
for $j=1,2$.	 Finally,	
 assuming $\varphi_2\in\dot{H}^{s+1}\cap L^1$, the third-order derivative of the solution fulfills
	\begin{align}
		\|\partial_t^3\varphi(t,\cdot)\|_{\dot{H}^s}&\lesssim (1+t)^{-3-\frac{2s+n-4\alpha}{2(2-2\alpha)}}\|\varphi_2\|_{\dot{H}^{s+1}\cap L^1},\label{C1}\\
		\|\partial_t^3\varphi(t,\cdot)\|_{\dot{H}^s}&\lesssim(1+t)^{-2-\frac{1-4\alpha}{2-2\alpha}}\|\varphi_2\|_{\dot{H}^{s+1}},\label{C2} 
	\end{align}
for $s\geqslant0$.
\end{coro}
\begin{proof}
Concerning the first estimates, we just need to consider the part of small frequencies, since as we showed in Proposition \ref{PROP-01}, for large frequencies we have exponential decay. From \eqref{Est-01}, we get
\begin{align*}
\left\|\chi_{\intt}(\xi)|\xi|^{s+2-j}\partial_t^j\widehat{\varphi}(t,\xi)\right\|_{L^2}&\lesssim\sup\limits_{|\xi|\leqslant\varepsilon_0}\left(|\xi|^{2-j+(2-2\alpha)j-2\alpha}\mathrm{e}^{-c|\xi|^{2-2\alpha}t}\right)\|\,|\xi|^s\widehat{\varphi}_2\|_{L^2}\\
&\lesssim (1+t)^{-(j+1)+\frac{j}{2-2\alpha}}\|\varphi_2\|_{\dot{H}^s},
\end{align*} 
in which $s=j-2$ or $s\geqslant0$ with $j=0,1,2$.
For the second estimates by applying \eqref{Est-01} and \eqref{Est-large}, we have
\begin{align*}
\| \partial_t^j\widehat{\varphi}(t,\xi)\|_{L^2} & \lesssim \left\|\chi_{\intt}(\xi)|\xi|^{(2-2\alpha)(j-1)+2(1-2\alpha)}\mathrm{e}^{-c|\xi|^{2-2\alpha}t}\right\|_{L^\infty}\|\widehat{\varphi}_2\|_{L^{2}}+\mathrm{e}^{-ct}\|(1-\chi_{\intt}(\xi))\widehat{\varphi}_2\|_{L^2} \\ &   \lesssim (1+t)^{1-j}\|\varphi_2\|_{L^2}.
\end{align*}
 For the third and fourth estimates, we use \eqref{Est-01} as well as \eqref{Est-large} to arrive at
\begin{align*}
\|\,|\xi|^s\partial_t^3\widehat{\varphi}(t,\xi)\|_{L^2}&\lesssim\left\|\chi_{\intt}(\xi)|\xi|^{3(2-2\alpha)-2\alpha+s}\mathrm{e}^{-c|\xi|^{2-2\alpha}t}\right\|_{L^2}\|\widehat{\varphi}_2\|_{L^{\infty}}+\mathrm{e}^{-ct}\|(1-\chi_{\intt}(\xi))|\xi|^{s+1}\widehat{\varphi}_2\|_{L^2}\\
&\lesssim(1+t)^{-3-\frac{2s+n-4\alpha}{2(2-2\alpha)}}\|\varphi_2\|_{L^1}+\mathrm{e}^{-ct}\|\varphi_2\|_{\dot{H}^{s+1}}
\end{align*}
and
\begin{align*}
\|\chi_{\intt}(\xi)|\xi|^s\partial_t^3\widehat{\varphi}(t,\xi)\|_{L^2}&\lesssim \sup\limits_{|\xi|\leqslant\varepsilon_0}\left(|\xi|^{3(2-2\alpha)-2\alpha-1}\mathrm{e}^{-c|\xi|^{2-2\alpha}t}\right)\|\,|\xi|^{s+1}\widehat{\varphi}_2\|_{L^2}\\
&\lesssim(1+t)^{-3+\frac{2\alpha+1}{2-2\alpha}}\|\varphi_2\|_{\dot{H}^{s+1}}
\end{align*}
for any $s\geqslant0$. We completed the proof.
\end{proof}

\noindent \underline{\bf{2nd case: $\alpha\in(1/2,1]$}}

\begin{prop}\label{PROP-02}Let $\alpha\in(1/2,1]$ and $n\geqslant 3$. Let us consider the linearized viscous problem \eqref{Eq-Linear-Cattaneo-Fractional} with $(\varphi_0,\varphi_1,\varphi_2)\in \ml{A}_{s,j}^{(2)}$. Then, $\partial_t^j \varphi$ fulfills the upper bound estimates
	\begin{align}
		\|\partial_t^j\varphi(t,\cdot)\|_{\dot{H}^{s+2-j}}&\lesssim (1+t)^{-\frac{2s+n+2}{4\alpha}}\|(\varphi_0,\varphi_1,\varphi_2)\|_{ \ml{A}_{s,j}^{(2)}},\label{Est-05}\\
		\left\|\left(\partial_t^j\varphi-\ml{G}_{2,j}P_{\varphi_1+\tau\varphi_2}\right)(t,\cdot)\right\|_{\dot{H}^{s+2-j}}&=o\left(t^{-\frac{2s+n+2}{4\alpha}}\right),\label{Est-06}
	\end{align}
	 where the profiles $\ml{G}_{2,j}$ and the spaces $\ml{A}_{s,j}^{(2)}$ were introduced in \eqref{mlG2} and \eqref{A2-SPACE}, respectively. Moreover,  the upper bound estimates in \eqref{Est-05} are optimal since the following lower bound estimates hold:
	\begin{align*}
		\|\partial_t^j\varphi(t,\cdot)\|_{\dot{H}^{s+2-j}}\gtrsim t^{-\frac{2s+n+2}{4\alpha}}|P_{\varphi_1+\tau\varphi_2}|
	\end{align*}
	for $t\gg1$, provided that $|P_{\varphi_1+\tau\varphi_2}|\neq0$. Here, we take either $s=j-2$ or $s\geqslant0$ with $j=0,1,2$. On the other hand, assuming $(\varphi_0,\varphi_1,\varphi_2)\in\ml{A}_{s+1-2\alpha,2}^{(2)}$, the second-order derivative of the solution fulfills
	\begin{align}\label{C6}
	\|\partial_t^2\varphi(t,\cdot)\|_{\dot{H}^{s+1-2\alpha}}&\lesssim(1+t)^{-\frac{2s+n+4-4\alpha}{4\alpha}}\|(\varphi_0,\varphi_1,\varphi_2)\|_{\ml{A}_{s+1-2\alpha,2}^{(1)}},\\
	\left\|\left(\partial_t^2\varphi-\ml{G}_{2,2}P_{\varphi_1+\tau\varphi_2}\right)(t,\cdot)\right\|_{\dot{H}^{s+1-2\alpha}}&=o\left(t^{-\frac{2s+n+4-4\alpha}{4\alpha}}\right),\notag
	\end{align}
and
\begin{align*}
	\|\partial_t^2\varphi(t,\cdot)\|_{\dot{H}^{s+1-2\alpha}}\gtrsim t^{-\frac{2s+n+4-4\alpha}{4\alpha}}|P_{\varphi_1+\tau\varphi_2}|,
\end{align*}
for $t\gg1$, provided that $|P_{\varphi_1+\tau\varphi_2}|\neq0$.
\end{prop}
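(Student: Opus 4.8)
The plan is to repeat the three-zone scheme from the proof of Proposition \ref{PROP-01}, now feeding in the Case 2 representation \eqref{Solution-Formula-2}--\eqref{A1} on small frequencies and the Case 1 large-frequency bounds \eqref{A3} (together with their companions for $\partial_t^0,\partial_t^2$ appearing in the same system) on large frequencies. First I would bound the $\dot{H}^{s+2-j}$ norm by the sum of the contributions over $\ml{Z}_{\intt}(\varepsilon_0)$, $\ml{Z}_{\bdd}(\varepsilon_0,N_0)$ and $\ml{Z}_{\extt}(N_0)$ through the Hausdorff--Young inequality. On the intermediate zone one invokes the exponential bound \eqref{Est-02}, and on $\ml{Z}_{\extt}(N_0)$ one uses \eqref{A3} and its companions; the crucial observation is that these large-frequency estimates lose exactly $2\alpha$ derivatives on $\widehat{\varphi}_2$ (and shift the orders of $\widehat{\varphi}_0,\widehat{\varphi}_1$ accordingly), which is precisely why the datum is prescribed in the $j$-dependent space $\ml{A}_{s,j}^{(2)}$ of \eqref{A2-SPACE} rather than in $\ml{A}_{s,j}^{(1)}$. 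On $\ml{Z}_{\intt}(\varepsilon_0)$ the pointwise bound carries the factor $|\xi|^{j}\,|\sin(c_0|\xi|t)|/(c_0|\xi|)\,\mathrm{e}^{-c|\xi|^{2\alpha}t}$, so multiplying by $|\xi|^{s+2-j}$ produces a symbol of order $|\xi|^{s+1}$, and Lemma \ref{Lemma-01} with $\beta=2\alpha$ and $\widehat{g}_0=\sin(c_0|\xi|t)$ yields the decay $t^{-(2(s+1)+n)/(4\alpha)}=t^{-(2s+n+2)/(4\alpha)}$ of \eqref{Est-05}. The restriction $n\geqslant3$ enters exactly here: in the borderline case $j=0$, $s=-2$ the symbol order is $|\xi|^{-1}$, and the hypothesis $2s+n>0$ of Lemma \ref{Lemma-01} reads $-2+n>0$.

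For the error estimate \eqref{Est-06} I would split the difference into an intrinsic part and a replacement part. The intrinsic part is governed by the refined bound \eqref{Est-03}: multiplying its right-hand side by $|\xi|^{s+2-j}$ gives a symbol of order $|\xi|^{s+2\alpha}$, whence Lemma \ref{Lemma-01} furnishes the rate $t^{-(2s+n)/(4\alpha)-1}$. Because $\alpha>1/2$ forces $1>1/(2\alpha)$, this is faster than the leading rate $t^{-(2s+n)/(4\alpha)-1/(2\alpha)}$ by the positive margin $(2\alpha-1)/(2\alpha)$, hence it is $o(\cdot)$. The replacement part, namely the passage from $\ml{G}_{2,j}(t,|D|)(\varphi_1+\tau\varphi_2)$ to $\ml{G}_{2,j}(t,\cdot)P_{\varphi_1+\tau\varphi_2}$, is treated verbatim as in \eqref{Est-04}: the Lagrange theorem applied to the kernel, combined with $\varphi_1+\tau\varphi_2\in L^1$ and $\|\varphi_1+\tau\varphi_2\|_{L^1(|x|\geqslant t^{\beta_1})}=o(1)$, produces an $o(\cdot)$ term once $\beta_1$ is chosen sufficiently small. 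Summing the two parts gives \eqref{Est-06}.

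The lower bound then follows from the Minkowski inequality, exactly as at the end of the proof of Proposition \ref{PROP-01}: one writes $\|\partial_t^j\varphi(t,\cdot)\|_{\dot{H}^{s+2-j}}\geqslant\|\ml{G}_{2,j}(t,\cdot)\|_{\dot{H}^{s+2-j}}|P_{\varphi_1+\tau\varphi_2}|-\|(\partial_t^j\varphi-\ml{G}_{2,j}P_{\varphi_1+\tau\varphi_2})(t,\cdot)\|_{\dot{H}^{s+2-j}}$, estimates the first term from below by $t^{-(2s+n+2)/(4\alpha)}|P_{\varphi_1+\tau\varphi_2}|$ through the lower-bound half of Lemma \ref{Lemma-01} (adapted to the oscillating symbol $\partial_t^j\sin(c_0|\xi|t)$), and absorbs the already-proven error term, which is $o(\cdot)$, into the leading contribution for $t\gg1$. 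This requires only $|P_{\varphi_1+\tau\varphi_2}|\neq0$.

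Finally, the second-order estimates in $\dot{H}^{s+1-2\alpha}$ repeat the same three-zone argument with the regularity index shifted by $1-2\alpha$. Concretely, on $\ml{Z}_{\intt}(\varepsilon_0)$ with $j=2$ the symbol order becomes $(s+1-2\alpha)+1=s+2-2\alpha$, so Lemma \ref{Lemma-01} gives $t^{-(2(s+2-2\alpha)+n)/(4\alpha)}=t^{-(2s+n+4-4\alpha)/(4\alpha)}$, while the large-frequency contribution is absorbed by the shifted datum space $\ml{A}_{s+1-2\alpha,2}^{(2)}$; the error and lower-bound parts are repeated word for word with this shift. I expect the main difficulty to be bookkeeping rather than conceptual: one must check for each $j\in\{0,1,2\}$ that the large-frequency bounds near \eqref{A3} demand exactly the regularities encoded in \eqref{A2-SPACE}, so that the $\ml{Z}_{\extt}(N_0)$ contribution is dominated by the slower polynomial decay coming from $\ml{Z}_{\intt}(\varepsilon_0)$. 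The delicate point is the loss of $2\alpha-1$ derivatives that separates $\ml{A}_{s,j}^{(2)}$ from $\ml{A}_{s,j}^{(1)}$ and is, together with the $|\xi|^{-1}$ singularity of the profile, responsible for the restriction to $n\geqslant3$.
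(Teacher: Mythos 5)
Your proposal is correct and follows essentially the same route as the paper: the paper's own (sketched) proof likewise runs the three-zone Fourier splitting of Proposition \ref{PROP-01} with the Case 2 pointwise bounds and Lemma \ref{Lemma-01} (symbol order $|\xi|^{s+1}$, $\beta=2\alpha$) for \eqref{Est-05}, the refined bound \eqref{Est-03} giving the extra rate $-\frac{2\alpha-1}{2\alpha}$ plus the replacement argument \eqref{Est-04} for \eqref{Est-06}, and the Minkowski inequality for the lower bound. Your bookkeeping of how the large-frequency estimates near \eqref{A3} force the $j$-dependent spaces $\ml{A}_{s,j}^{(2)}$, and of where $n\geqslant 3$ enters, is accurate and in fact more explicit than the paper's text.
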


\begin{proof}
Let $s=j-2$ or $s\geqslant0$ with $j=0,1,2$. By using the pointwise estimates in Cases 1 and 2 for $\alpha\in(1/2,1]$ in Section \ref{Subsection-Refine-Fourier}, with the same approach as the one in Proposition \ref{PROP-01}, we find
\begin{align*}
\|\chi_{\intt}(\xi)|\xi|^{s+2-j}\partial_t^j\widehat{\varphi}(t,\xi)\|_{L^2}&\lesssim\left\|\chi_{\intt}(\xi)|\xi|^{s+1}\mathrm{e}^{-c|\xi|^{2\alpha}t}\right\|_{L^2}\|(\widehat{\varphi}_0,\widehat{\varphi}_1,\widehat{\varphi}_2)\|_{(L^{\infty})^3}\\
&\lesssim (1+t)^{-\frac{2s+n+2}{4\alpha}}\|(\varphi_0,\varphi_1,\varphi_2)\|_{(L^{1})^3},
\end{align*}
and exponential decay rates for bounded and large frequencies. We stress that the $L^2$-regularities for the Cauchy data from the estimates for large frequencies complete the proof of \eqref{Est-05}.

 From \eqref{Est-03}, we know
\begin{align*}
	 \left\|\partial_t^j\varphi(t,\cdot)-\ml{G}_{2,j}(t,|D|)(\varphi_1+\tau\varphi_2)(\cdot)\right\|_{\dot{H}^{s+2-j}} \color{black}\lesssim (1+t)^{-\frac{2s+n+2}{4\alpha}-\frac{2\alpha-1}{2\alpha}}\|(\varphi_0,\varphi_1,\varphi_2)\|_{ \ml{A}_{s,j}^{(2)}}.
\end{align*}
Moreover, the preliminary argument in \eqref{Est-04} shows the large-time property
\begin{align*}
\|\ml{G}_{2,j}(t,|D|)(\varphi_1+\tau\varphi_2)(\cdot)-\ml{G}_{2,j}(t,\cdot)P_{\varphi_1+\tau\varphi_2}\|_{\dot{H}^{s+2-j}}=o\left(t^{-\frac{2s+n+2}{4\alpha}}\right).
\end{align*}
Thus, we immediately obtain \eqref{Est-06} by the triangle inequality. Finally, with the aid of Lemma \ref{Lemma-01} we find
\begin{align*}
\|\,|D|^{s+2-j}\ml{G}_{2,j}(t,\cdot)\|_{L^2}=\left\|\chi_{\intt}(\xi)|\xi|^{s+2-j}\frac{\partial_t^j\sin(c_0|\xi|t)}{c_0|\xi|}\mathrm{e}^{-\frac{b\nu}{2}|\xi|^{2\alpha}t}\right\|_{L^2}\gtrsim t^{-\frac{2s+n+2}{4\alpha}}
\end{align*}
for $t\gg1$, following the approach in Proposition \ref{PROP-01}, we complete the proof of the desired lower bound estimates. Eventually, the derivations of estimates for $\partial_t^2\varphi(t,\cdot)$ in the $\dot{H}^{s+1-2\alpha}$ are analogous to the previous cases.
\end{proof}
\begin{coro}\label{CORO-02}Let $\alpha\in(1/2,1]$ and $n\geqslant 3$. Let us consider the linearized viscous problem \eqref{Eq-Linear-Cattaneo-Fractional} with $\varphi_0=\varphi_1=0$ and $\varphi_2\in\dot{H}^s$. Then, the functions $\partial_t^j\varphi$ fulfill the upper bound  estimates
	\begin{align*}
	\|\partial_t^j\varphi(t,\cdot)\|_{\dot{H}^{s+2-j}}&\lesssim (1+t)^{-\frac{1}{2\alpha}}\|\varphi_2\|_{\dot{H}^s},
	\end{align*}
	 where we take either $s=j-2$ or $s\geqslant0$ with $j=0,1,2$. Additionally, assuming $\varphi_2\in\dot{H}^{s+1}\cap L^1$, the third-order derivative of the solution fulfills
	 \begin{align}
	 \|\partial_t^3\varphi(t,\cdot)\|_{\dot{H}^{s+1-2\alpha}}&\lesssim(1+t)^{-\frac{2s+n+6-4\alpha}{4\alpha}}\|\varphi_2\|_{\dot{H}^{s+1}\cap L^1}, \label{C10}\\
	 \|\partial_t^3\varphi(t,\cdot)\|_{\dot{H}^{s+1-2\alpha}}&\lesssim(1+t)^{1-\frac{1}{\alpha}}\|\varphi_2\|_{\dot{H}^{s+1}}, \label{C11}
	 \end{align}
for $s\geqslant0$.
\end{coro}

\noindent \underline{\bf{3rd case: $\alpha=1/2$}}

The following results can be proved with the same approach seen for the previous cases, therefore, we omit the details. 
\begin{prop}\label{PROP-03}Let $\alpha=1/2$ and $n\geqslant 2$. Let us consider the linearized viscous problem \eqref{Eq-Linear-Cattaneo-Fractional} with $(\varphi_0,\varphi_1,\varphi_2)\in \ml{A}_{s,j}^{(1)}$. Then, $\partial_t^j \varphi$ fulfills the upper bound  estimates
	\begin{align}
		\|\partial_t^j\varphi(t,\cdot)\|_{\dot{H}^{s+2-j}}&\lesssim (1+t)^{-(s+1)-\frac{n}{2}}\|(\varphi_0,\varphi_1,\varphi_2)\|_{ \ml{A}_{s,j}^{(1)}}, \label{Est-09}\\
		\left\|\left(\partial_t^j\varphi-\ml{G}_{3,j}P_{\varphi_1+\tau\varphi_2}\right)(t,\cdot)\right\|_{\dot{H}^{s+2-j}}&=o\left(t^{-(s+1)-\frac{n}{2}}\right),\notag
	\end{align}
	where the profiles $\ml{G}_{3,j}$ and the spaces $\ml{A}_{s,j}^{(1)}$ were introduced in \eqref{mlG3} and \eqref{A1-SPACE}, respectively. Moreover, the upper bound estimates in \eqref{Est-09} are optimal since the following lower bound estimates hold:
	\begin{align*}
		\|\partial_t^j\varphi(t,\cdot)\|_{\dot{H}^{s+2-j}}\gtrsim t^{-(s+1)-\frac{n}{2}}|P_{\varphi_1+\tau\varphi_2}|
	\end{align*}
	for $t\gg1$, provided that $|P_{\varphi_1+\tau\varphi_2}|\neq0$. Here, we take either $s=j-2$ or $s\geqslant0$ with $j=0,1,2$.
\end{prop}
\begin{coro}\label{CORO-03}Let $\alpha=1/2$ and $n\geqslant 2$. Let us consider the linearized viscous problem \eqref{Eq-Linear-Cattaneo-Fractional}  with $\varphi_0=\varphi_1=0$ and $\varphi_2\in\dot{H}^s$. Then, the functions $\partial_t^j\varphi$ fulfill the upper bound  estimates
	\begin{align*}
	\|\partial_t^j\varphi(t,\cdot)\|_{\dot{H}^{s+2-j}}&\lesssim(1+t)^{-1}\|\varphi_2\|_{\dot{H}^s},
	\end{align*}
where we take either $s=j-2$ or $s\geqslant0$ with $j=0,1,2$. Additionally, assuming $\varphi_2\in \dot{H}^{s+1}\cap L^1$, the third-order derivative of the solution fulfills
\begin{align*}
	\|\partial_t^3\varphi(t,\cdot)\|_{\dot{H}^{s+1}}&\lesssim (1+t)^{-(s+3)-\frac{n}{2}}\|\varphi_2\|_{\dot{H}^{s+1}\cap L^1},\\
	\|\partial_t^3\varphi(t,\cdot)\|_{\dot{H}^{s+1}}&\lesssim (1+t)^{-2}\|\varphi_2\|_{\dot{H}^{s+1}},
\end{align*}
for  $s\geqslant0$.
\end{coro}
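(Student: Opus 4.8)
The plan is to mirror the proof of Corollary \ref{CORO-01}, now built on the explicit characteristic roots \eqref{B4} for $\alpha=1/2$. Since $\varphi_0=\varphi_1=0$, the representation \eqref{Solution-Formula} collapses to $\widehat{\varphi}=\widehat{K}_2\widehat{\varphi}_2$, so every estimate reduces to bounding $|\xi|^{s+2-j}\partial_t^j\widehat{K}_2\,\widehat{\varphi}_2$ (respectively $|\xi|^{s+1}\partial_t^3\widehat{K}_2\,\widehat{\varphi}_2$ for the third-order derivative), and I would split the $L^2$ norm over the three zones $\ml{Z}_{\intt}(\varepsilon_0)$, $\ml{Z}_{\bdd}(\varepsilon_0,N_0)$, $\ml{Z}_{\extt}(N_0)$ exactly as in Proposition \ref{PROP-01}. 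On the small-frequency zone the Case 4 pointwise bound, after dividing by $\tfrac{1}{2}\sqrt{4c_0^2-b^2\nu^2}|\xi|$ in the $\sin$-kernel and using $|\sin|\leqslant1$, gives $\chi_{\intt}(\xi)|\partial_t^j\widehat{\varphi}|\lesssim\chi_{\intt}(\xi)|\xi|^{j-1}\mathrm{e}^{-c|\xi|t}|\widehat{\varphi}_2|$, where the gain $|\xi|^{-1}$ is the source of the improved rates.

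For the first family I would factor out $|\xi|^{s}$ to expose $\|\varphi_2\|_{\dot{H}^s}$: the small-frequency weight is $|\xi|^{(s+2-j)+(j-1)}=|\xi|^{s+1}$, hence
\begin{align*}
\|\chi_{\intt}(\xi)|\xi|^{s+2-j}\partial_t^j\widehat{\varphi}\|_{L^2}\lesssim\sup_{|\xi|\leqslant\varepsilon_0}\big(|\xi|\,\mathrm{e}^{-c|\xi|t}\big)\,\|\varphi_2\|_{\dot{H}^s}\lesssim(1+t)^{-1}\|\varphi_2\|_{\dot{H}^s},
\end{align*}
using $\sup_{r\geqslant0}(r\mathrm{e}^{-crt})\simeq t^{-1}$; this covers both $s=j-2$ and $s\geqslant0$. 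The bounded zone gives exponential decay via \eqref{Est-02}, and on the large-frequency zone the weight collapses to $|\xi|^{s}$ after inserting $|\xi|^{j-2}|\widehat\varphi_2|$ from \eqref{Est-large}, so $\mathrm{e}^{-ct}\|\varphi_2\|_{\dot{H}^s}$ is more than enough and the first estimate follows with no loss of regularity.

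For the third-order derivative the small-frequency weight is $|\xi|^{(s+1)+2}=|\xi|^{s+3}$. Bounding $|\widehat{\varphi}_2|\lesssim\|\varphi_2\|_{L^1}$ and invoking Lemma \ref{Lemma-01} with $\beta=1$ yields $\|\chi_{\intt}(\xi)|\xi|^{s+3}\mathrm{e}^{-c|\xi|t}\|_{L^2}\simeq t^{-(s+3)-n/2}$, which is the $\dot{H}^{s+1}\cap L^1$ estimate; keeping $|\xi|^{s+1}\widehat{\varphi}_2$ instead and using $\sup_{r\geqslant0}(r^2\mathrm{e}^{-crt})\simeq t^{-2}$ gives the purely $\dot{H}^{s+1}$ estimate. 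The hard part will be the large-frequency zone: the crude uniform bound \eqref{Est-large} supplies only $\mathrm{e}^{-ct}|\xi|\,|\widehat{\varphi}_2|$, whose weight $|\xi|^{s+2}$ would demand $\varphi_2\in\dot{H}^{s+2}$ and break the claimed regularity. The remedy, special to $\alpha=1/2$, is that the explicit roots \eqref{B4} carry the \emph{frequency-dependent} real part $\lambda_{\mathrm{R}}=-\tfrac{b\nu}{2}|\xi|$; consequently the oscillatory part of $\partial_t^3\widehat{K}_2$ decays like $|\xi|\,\mathrm{e}^{-\frac{b\nu}{2}|\xi|t}$ while the $\lambda_1=-1/\tau$ part decays like $|\xi|^{-2}\mathrm{e}^{-t/\tau}$. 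Factoring out $|\xi|^{s+1}$ and using $\sup_{|\xi|\geqslant N_0}(|\xi|\mathrm{e}^{-c|\xi|t})\lesssim\mathrm{e}^{-c't}$ for $t\gg1$ then produces decay faster than any polynomial, controlled solely by $\|\varphi_2\|_{\dot{H}^{s+1}}$ and with no loss of derivatives. Collecting the three zones and retaining the slower small-frequency rate completes both higher-order estimates.
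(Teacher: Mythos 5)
Your treatment of the first family of estimates is correct and is exactly the paper's method for Corollary \ref{CORO-01} transplanted to the explicit roots \eqref{B4}: with $\varphi_0=\varphi_1=0$ the Case 4 pointwise bound gives $\chi_{\intt}(\xi)|\partial_t^j\widehat{\varphi}|\lesssim\chi_{\intt}(\xi)|\xi|^{j-1}\mathrm{e}^{-c|\xi|t}|\widehat{\varphi}_2|$, the weight $|\xi|^{s+2-j}$ then leaves exactly one spare power of $|\xi|$, and \eqref{Est-02}, \eqref{Est-large} close the bounded and exterior zones with no loss of regularity. The small-frequency analysis of $\partial_t^3\varphi$ (weight $|\xi|^{s+3}$ with Lemma \ref{Lemma-01} at $\beta=1$ for the $L^1$-based rate, and $\sup_{r}(r^2\mathrm{e}^{-crt})\simeq t^{-2}$ for the $\dot{H}^{s+1}$-based rate) is also right and mirrors how the paper proves \eqref{C1} and \eqref{C2}.

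The gap is in the large-frequency zone for $\partial_t^3\varphi$, and your proposed remedy does not close it. Your structural claim about $\partial_t^3\widehat{K}_2$ is correct, but $\sup_{|\xi|\geqslant N_0}\big(|\xi|\mathrm{e}^{-c|\xi|t}\big)\simeq t^{-1}$ for $0<t\leqslant 1/(cN_0)$ and is unbounded as $t\downarrow 0$, so your argument only yields the claimed bounds for $t\gtrsim 1$, whereas the prefactors $(1+t)^{-2}$ and $(1+t)^{-(s+3)-\frac{n}{2}}$ require uniformity down to $t=0$. This is not a defect you can repair by a sharper kernel estimate: since $\partial_t^3\widehat{K}_2(0,|\xi|)=\lambda_1+\lambda_2+\lambda_3=-\tau^{-1}-b\nu|\xi|$, one has $\partial_t^3\varphi(0,\cdot)=-\tau^{-1}\varphi_2-b\nu(-\Delta)^{1/2}\varphi_2$, whose $\dot{H}^{s+1}$ norm dominates $\|\varphi_2\|_{\dot{H}^{s+2}}$ up to a harmless term and is therefore infinite for generic $\varphi_2\in(\dot{H}^{s+1}\cap L^1)\setminus\dot{H}^{s+2}$. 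In other words, the two $\partial_t^3$ estimates cannot hold at $t=0$ under the stated hypotheses. The high-frequency bookkeeping balances only if the target norm is $\dot{H}^{s}$ (the $\alpha=1/2$ limit of the norm $\dot{H}^{s+1-2\alpha}$ appearing in Corollary \ref{CORO-02}), in which case \eqref{Est-large} with $j=3$ contributes $\mathrm{e}^{-ct}\|\varphi_2\|_{\dot{H}^{s+1}}$ exactly as in the proof of \eqref{C1}, and the small-frequency rates become $-(s+2)-\frac{n}{2}$ and $-1$. You should either prove that $\dot{H}^s$ version, or restrict your version to $t\geqslant 1$, or add the assumption $\varphi_2\in\dot{H}^{s+2}$ to cover small times; as written, the final step ``collecting the three zones completes both higher-order estimates'' is not justified on $0\leqslant t\lesssim 1$.
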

\begin{remark}\label{Rem03}
Because of $\|\chi_{\extt}(D)f\|_{H^s}\approx\|\chi_{\extt}(D)f\|_{\dot{H}^s}$ the $H^s$ regularities for the initial data in Propositions \ref{PROP-01}, \ref{PROP-02} and \ref{PROP-03} can be replaced by the $\dot{H}^s$ regularities for any $s\in\mb{R}$. We will use this kind of estimates (with the Riesz potential spaces replacing the Bessel potential spaces in $\mathcal{A}^{(\ell)}_{s,j}$) in the study of the nonlinear problem \eqref{Eq-Cattaneo-Fractional} in the next section. 
\end{remark}
\begin{remark}
Let us summarize all derived estimates in this section. By assuming suitable $H^{\tilde{s}}\cap L^1$ regularities for the initial data, where  $\tilde{s}\geqslant 0$ depends both on the order of the Cauchy data and on the spatial regularity for $\partial_t^j \varphi$, and $|P_{\varphi_1+\tau\varphi_2}|\neq 0$, the following optimal decay estimates for the solution (and its time derivatives) to the linearized viscous Cattaneo-type model \eqref{Eq-Linear-Cattaneo-Fractional} hold:
\begin{align*}
\|\partial_t^j\varphi(t,\cdot)\|_{\dot{H}^{s+2-j}}\simeq\begin{cases}
t^{-(j+1)-\frac{2s+n-2j}{2(2-2\alpha)}}&\mbox{when}\ \ \alpha\in[0,1/2),\\
t^{-\frac{2s+n+2}{4\alpha}}&\mbox{when}\ \ \alpha\in[1/2,1],
\end{cases}
\end{align*}
for $s=j-2$ or $s\geqslant0$ with $j=0,1,2$, as well as
\begin{align*}
\|\partial_t^2\varphi(t,\cdot)\|_{\dot{H}^{s+1-2\alpha}}\simeq t^{-\frac{2s+n+4-4\alpha}{4\alpha}}\ \ \mbox{when}\ \ \alpha\in(1/2,1],
\end{align*}
for $s\geqslant0$. 
\end{remark}
\section{Global (in time) solutions to the nonlinear viscous Cattaneo-type model}\label{Section_GESDS}
In this section, we study the nonlinear viscous Cattaneo-type model \eqref{Eq-Cattaneo-Fractional} involving fractional Laplacians with $\alpha\in[0,1/2)$, $\alpha\in(1/2,1]$ and $\alpha=1/2$, respectively. We will prove Theorems \ref{Thm-01}, \ref{Thm-02} and \ref{Thm-03}, after some preparatory explanations regarding our approach.

\subsection{Anomalous diffusion case: $\alpha\in[0,1/2)$} \label{Subsection anomalous diff case}


 This subsection is organized as follows:
\begin{itemize}
\item We firstly introduce a nonlinear integral operator, whose fixed point will be the solution to our problem, and the time-weighted evolution space where this operator is defined. Afterwards, we explain the philosophy of our proof illustrating how to handle the nonlinear terms and what are the target inequalities that we have to prove in order to apply Banach fixed point theorem.
\item Then, we shall see how to employ tools from the harmonic analysis in order to handle the nonlinear terms and the Cauchy data (see Appendix \ref{Appendix} for the inequalities from the harmonic analysis that we are going to apply).
\item Subsequently, we prove the validity of the contraction principle (uniformly in time) for our nonlinear integral operator.
\item Finally, we show the optimality of the long-time estimates and we derive the asymptotic profiles of the solution and its time-derivatives.
\end{itemize}

\subsubsection{Philosophy of the proof}
For $n\geqslant 2$ and $\alpha\in[0,1/2)$ let us consider the time-weighted Sobolev space 
 \begin{align*}
 	X_s(T):=\ml{C}([0,T],H^{s+2})\cap \ml{C}^1([0,T],H^{s+1})\cap \ml{C}^2([0,T],H^s)
 \end{align*}for $T>0$ with the corresponding norm
\begin{align*}
\|\psi\|_{X_s(T)}:=\sup\limits_{t\in[0,T]}\left(\,\sum\limits_{j=0,1,2}(1+t)^{j+\frac{n-4\alpha}{2(2-2\alpha)}}\|\partial_t^j\psi(t,\cdot)\|_{L^2}+\sum\limits_{j=0,1,2}(1+t)^{j+1+\frac{2s+n-2j}{2(2-2\alpha)}}\|\partial_t^j\psi(t,\cdot)\|_{\dot{H}^{s+2-j}} \right),
\end{align*}
where $s>n/2-1$. Note that the weights are the reciprocal functions of the decay rates of the $(L^2\cap L^1)-L^2$ estimates and the decay rates of the $(\dot{H}^s\cap L^1)-\dot{H}^s$ estimates in Proposition \ref{PROP-01}.


Then, motivated by Duhamel's principle, we introduce the integral operator 
\begin{align}\label{A5}
N:\psi\in X_s(T)\to N\psi:=\psi^{\lin}+\psi^{\non},
\end{align}
in which $\psi^{\lin}=\psi^{\lin}(t,x)$ solves the linearized model \eqref{Eq-Linear-Cattaneo-Fractional}, and the function $\psi^{\non}=\psi^{\non}(t,x)$ is given by
\begin{align*}
\psi^{\non}(t,x):=\int_0^tK_2(t-\eta,x)\ast\partial_t\ml{N}_{\psi}(\eta,x)\mathrm{d}\eta.
\end{align*}
We recall that $K_2(t,x)$ is the kernel for the third initial data in the linearized viscous problem \eqref{Eq-Linear-Cattaneo-Fractional}, whose representation in the Fourier space was determined in \eqref{Solution-Formula}. In other words, $K_2=K_2(t,x)$ is the fundamental solution to the viscous problem \eqref{Eq-Linear-Cattaneo-Fractional} with initial data $\varphi_0=\varphi_1=0$ and $\varphi_2=\delta_0$ where $\delta_0$ stands for the Dirac distribution at $x=0$ with respect to spatial variables. Importantly, due to the facts that $\widehat{K}_2(0,|\xi|)=\partial_t\widehat{K}_2(0,|\xi|)=0$ and $\partial_t^2\widehat{K}_2(0,|\xi|)=1$, by using an integration by parts with respect to the time variable, we arrive at  the crucial representations for $\partial_t^j\psi^{\non}$ via
\begin{align}
\partial_t^j\psi^{\non}(t,x)&=-\partial_t^j K_2(t,x)\ast\left(\frac{B}{2Ac_0^2}|\psi_1(x)|^2+|\nabla\psi_0(x)|^2\right)+\int_0^t\partial_t^{j+1}K_2(t-\eta,x)\ast\ml{N}_{\psi}(\eta,x)\mathrm{d}\eta\label{Rep-non}
\end{align}
with $j=0,1$.  Moreover, the second-order derivative fulfills
\begin{align}
	\partial_t^2\psi^{\non}(t,x)&= \ml{N}_{\psi}(t,x) - \partial_t^2 K_2(t,x)\ast\left(\frac{B}{2Ac_0^2}|\psi_1(x)|^2+|\nabla\psi_0(x)|^2\right)\notag\\
	&\quad+\int_0^t\partial_t^{3}K_2(t-\eta,x)\ast\ml{N}_{\psi}(\eta,x)\mathrm{d}\eta,\label{Cn} 
\end{align}
in which an additional term appears.

Next, we demonstrate the global (in time) existence and uniqueness of the small data Sobolev solutions to the nonlinear viscous Cattaneo-type model \eqref{Eq-Cattaneo-Fractional} by proving the existence of a unique fixed point of $N$ in the space $X_s(T)$.

Our target is to prove that the following inequalities hold uniformly with respect to $T>0$ (meaning that the hidden multiplicative constants are independent of $T$):
\begin{align}
\|N\psi\|_{X_s(T)}&\lesssim \|(\psi_0,\psi_1,\psi_2)\|_{\ml{A}_{s,0}^{(1)}}+\|(\psi_0,\psi_1)\|_{H^{s+2}\times H^{s+1}}^2+\|\psi\|_{X_s(T)}^2,\label{Crucial-01}\\
\|N\psi-N\bar{\psi}\|_{X_s(T)}&\lesssim \|\psi-\bar{\psi}\|_{X_s(T)}\left(\|\psi\|_{X_s(T)}+\|\bar{\psi}\|_{X_s(T)}\right),\label{Crucial-02}
\end{align}
for any $\psi,\bar{\psi}\in X_s(T)$.
Let $\|(\psi_0,\psi_1,\psi_2)\|_{\ml{A}_{s,0}^{(1)}}=\epsilon>0$ be a small constant. Since  $H^{s+2}\times H^{s+1}$ contains the first two factors in $\ml{A}_{s,0}^{(1)}$, then \eqref{Crucial-01} and \eqref{Crucial-02} provide the existence of a unique Sobolev solution $\psi=N\psi\in X_s(T)$ thanks to the Banach fixed point theorem. Furthermore, $\psi$ can be globally prolonged in time since \eqref{Crucial-01} and \eqref{Crucial-02} hold uniformly with respect to $T$. The previous strategy for proving the global (in time) existence of solutions has been used for several evolution equations with power nonlinearities (see, for example, \cite{Ebert-Reissig-book}).

\subsubsection{Preliminary and a priori estimates} \label{Subsection: preliminary and apriori est }

\underline{\bf{Estimates for the initial data:}}

From the first term in \eqref{Rep-non} we see that nonlinear terms depending on the Cauchy data appear. For this reason, we estimate these terms in the spaces $L^2\cap L^1$ and $\dot{H}^s\cap L^1$.

 Applying the fractional Gagliardo-Nirenberg inequality in Lemma \ref{fractionalgagliardonirenbergineq}, we may obtain
\begin{align*}
\|\,|\nabla\psi_0|^2\|_{L^2\cap L^1}+\|\,|\psi_1|^2\|_{L^2\cap L^1}&\lesssim \|\psi_0\|_{\dot{H}^1}^2+\|\psi_0\|_{\dot{H}^1_4}^2+\|\psi_1\|_{L^2}^2+\|\psi_1\|_{L^4}^2\\
&\lesssim \|\psi_0\|_{\dot{H}^1}^2+\|\psi_0\|_{L^2}^{2-\frac{n+4}{2(s+2)}}\|\psi_0\|_{\dot{H}^{s+2}}^{\frac{n+4}{2(s+2)}}+\|\psi_1\|_{L^2}^2+\|\psi_1\|_{L^2}^{2-\frac{n}{2(s+1)}}\|\psi_1\|_{\dot{H}^{s+1}}^{\frac{n}{2(s+1)}}\\
&\lesssim\|\psi_0\|_{H^{s+2}}^2+\|\psi_1\|_{H^{s+1}}^2
\end{align*}
and
\begin{align*}
\|\,|\nabla\psi_0|^2\|_{\dot{H}^s\cap L^1}+\|\,|\psi_1|^2\|_{\dot{H}^s\cap L^1}&\lesssim \|\nabla\psi_0\|_{L^{\infty}}\|\psi_0\|_{\dot{H}^{s+1}}+\|\psi_1\|_{L^{\infty}}\|\psi_1\|_{\dot{H}^s}+\|\psi_0\|_{\dot{H}^1}^2+\|\psi_1\|_{L^2}^2\\
&\lesssim\|\psi_0\|_{H^{s+2}}^2+\|\psi_1\|_{H^{s+1}}^2
\end{align*}
by the Sobolev embedding $H^{s+1}\hookrightarrow L^{\infty}$ as $s>n/2-1$ for all $n\geqslant 2$ and Lemmas  \ref{fractionembedd}-\ref{Sickel lemma}.

\medskip
\noindent\underline{\bf{Estimates for the nonlinear terms:}}

 Due to the integral term in \eqref{Rep-non} it is convenient to estimate $|\psi_t(\eta,\cdot)|^2$ and $|\nabla\psi(\eta,\cdot)|^2$ in the $L^2$, $\dot{H}^{s+1}$ norms, respectively. Lemma \ref{fractionalgagliardonirenbergineq} implies
\begin{align*}
\|\,|\psi_t(\eta,\cdot)|^2\|_{L^2}\lesssim\|\psi_t(\eta,\cdot)\|_{L^4}^2&\lesssim\|\psi_t(\eta,\cdot)\|_{L^2}^{2-\frac{n}{2(s+1)}}\|\psi_t(\eta,\cdot)\|_{\dot{H}^{s+1}}^{\frac{n}{2(s+1)}}\\
&\lesssim (1+\eta)^{-2-\frac{3n-8\alpha}{2(2-2\alpha)}}\|\psi\|_{X_s(\eta)}^2,
\end{align*}
where we used the condition $\frac{n}{4(s+1)}\in[0,1]$ which is always  true thanks to the assumption $s>n/2-1$. We make use of Lemmas \ref{fractionleibnizrule} as well as \ref{fractionembedd} to deduce
\begin{align*}
	\|\,|\psi_t(\eta,\cdot)|^2\|_{\dot{H}^{s+1}}&\lesssim\|\psi_t(\eta,\cdot)\|_{L^{\infty}}\|\psi_t(\eta,\cdot)\|_{\dot{H}^{s+1}}\\
	&\lesssim\left(\|\psi_t(\eta,\cdot)\|_{\dot{H}^{\frac{n}{2}-\varepsilon_1}}+\|\psi_t(\eta,\cdot)\|_{\dot{H}^{s+1}}\right)\|\psi_t(\eta,\cdot)\|_{\dot{H}^{s+1}}\\
	&\lesssim\|\psi_t(\eta,\cdot)\|_{L^2}^{1-\frac{n-2\varepsilon_1}{2(s+1)}}\|\psi_t(\eta,\cdot)\|_{\dot{H}^{s+1}}^{1+\frac{n-2\varepsilon_1}{2(s+1)}}+\|\psi_t(\eta,\cdot)\|_{\dot{H}^{s+1}}^2\\
	&\lesssim(1+\eta)^{-2-\frac{2s+3n+2-8\alpha-2\varepsilon_1}{2(2-2\alpha)}}\|\psi\|_{X_s(\eta)}^2
\end{align*}
with a sufficiently small constant $\varepsilon_1>0$. By using the same approach as before, we obtain
\begin{align}
\|\,|\nabla\psi(\eta,\cdot)|^2\|_{L^1}&\lesssim (1+\eta)^{-\frac{n-4\alpha+2}{2-2\alpha}}\|\psi\|_{X_s(\eta)}^2,\label{D3}\\
\|\,|\nabla\psi(\eta,\cdot)|^2\|_{L^2}&\lesssim (1+\eta)^{-\frac{3n-8\alpha+4}{2(2-2\alpha)}}\|\psi\|_{X_s(\eta)}^2,\label{D4}\\
\|\,|\nabla\psi(\eta,\cdot)|^2\|_{\dot{H}^{s+1}}&\lesssim (1+\eta)^{-2-\frac{2s+3n-2-2\varepsilon_1}{2(2-2\alpha)}}\|\psi\|_{X_s(\eta)}^2\label{D5}.
\end{align}
The last three estimates for $|\nabla\psi(\eta,\cdot)|^2$ play dominant roles comparing with the corresponding estimates for $|\psi_t(\eta,\cdot)|^2$ in $L^1$, $L^2$ and $\dot{H}^{s+1}$ norms.

\subsubsection{Uniform estimates and global (in time) existence of the solutions}
Let us begin with the estimates for $\partial_t^j N\psi$ when $j=0,1$. We recall the definition of $N\psi$ in \eqref{A5} together with the representation in \eqref{Rep-non} for $\partial_t^j \psi^{\mathrm{non}}$. by applying $(L^2\cap L^1)-L^2$ estimates (Proposition \ref{PROP-01} with $s=j-2$) in the sub-interval $[0,t/2]$ and $L^2-L^2$ estimates (Corollary \ref{CORO-01} with $s=j-2$) in the sub-interval $[t/2,t]$, we may derive
\begin{align*}
\|\partial_t^jN\psi(t,\cdot)\|_{L^2}&\lesssim(1+t)^{-j-\frac{n-4\alpha}{2(2-2\alpha)}}\left(\|(\psi_0,\psi_1,\psi_2)\|_{\ml{A}_{s,j}^{(1)}}+\left\||\psi_1|^2+|\nabla\psi_0|^2\right\|_{L^2\cap L^1}\right)\\
&\quad+\int_0^{t/2}(1+t-\eta)^{-(j+1)-\frac{n-4\alpha}{2(2-2\alpha)}}\left(\|\,|\psi_t(\eta,\cdot)|^2\|_{L^2\cap L^1}+\|\,|\nabla\psi(\eta,\cdot)|^2\|_{L^2\cap L^1}\right)\mathrm{d}\eta\\
&\quad+\int_{t/2}^t(1+t-\eta)^{-j}\left(\|\,|\psi_t(\eta,\cdot)|^2\|_{L^2}+\|\,|\nabla\psi(\eta,\cdot)|^2\|_{L^2}\right)\mathrm{d}\eta.
\end{align*} 
We remark that due to the higher order derivatives $\partial_t^{j+1}K_2$ with $j=0,1$, we have to use the estimates \eqref{A2} and \eqref{extra L2 L2 estimate alpha< 1/2} for $\partial_t K_2$ and $\partial_t^2K_2$ from Proposition \ref{PROP-01} and Corollary \ref{CORO-01}. In particular, we combined $L^2\hookrightarrow H^{j-1}$ with \eqref{A2} for $\eta\in [0,t/2]$.

From  \eqref{D3}, \eqref{D4},  we arrive at
\begin{align}
&(1+t)^{j+\frac{n-4\alpha}{2(2-2\alpha)}}\|\partial_t^jN\psi(t,\cdot)\|_{L^2}\notag\\
&\qquad\lesssim\|(\psi_0,\psi_1,\psi_2)\|_{\ml{A}_{s,j}^{(1)}}+\|\psi_0\|_{H^{s+2}}^2+\|\psi_1\|_{H^{s+1}}^2+(1+t)^{-1}\int_0^{t/2}(1+\eta)^{-\frac{n-4\alpha+2}{2-2\alpha}}\mathrm{d}\eta\,\|\psi\|_{X_s(T)}^2\notag\\
&\qquad\quad+(1+t)^{j-1-\frac{n}{2-2\alpha}}\int_{t/2}^t(1+t-\eta)^{-j}\mathrm{d}\eta\, \|\psi\|_{X_s(T)}^2\notag\\
&\qquad\lesssim\|(\psi_0,\psi_1,\psi_2)\|_{\ml{A}_{s,j}^{(1)}}+\|(\psi_0,\psi_1)\|_{H^{s+2}\times H^{s+1}}^2+\left((1+t)^{-1}+(1+t)^{-\frac{n}{2-2\alpha}+\varepsilon_1}\right)\|\psi\|_{X_s(T)}^2,\label{Est-07}
\end{align}
where we used $-\frac{n-4\alpha+2}{2-2\alpha}< -1$ since $n>2\alpha$ and $-(j+2)+\frac{j+1}{2-2\alpha}<-1$ since $\alpha\in[0,1/2)$. Hereafter, $\varepsilon_1>0$ is a suitably small constant. Furthermore, due to $j\leqslant\frac{n}{2-2\alpha}$ for $j=0,1$ and $n\geqslant 2$, we have
\begin{align*}
	(1+t)^{j+\frac{n-4\alpha}{2(2-2\alpha)}}\|\partial_t^jN\psi(t,\cdot)\|_{L^2}\lesssim\|(\psi_0,\psi_1,\psi_2)\|_{\ml{A}_{s,j}^{(1)}}+\|(\psi_0,\psi_1)\|_{H^{s+2}\times H^{s+1}}^2+\|\psi\|_{X_s(T)}^2.
\end{align*}

Analogously by \eqref{D3} and \eqref{D5}, the higher order energy terms can be estimated as follows:
\begin{align}
&(1+t)^{j+1+\frac{2s+n-2j}{2(2-2\alpha)}}\|\partial_t^jN\psi(t,\cdot)\|_{\dot{H}^{s+2-j}}\notag\\
&\qquad\lesssim\|(\psi_0,\psi_1,\psi_2)\|_{\ml{A}_{s,j}^{(1)}}+\|\psi_0\|_{H^{s+2}}^2+\|\psi_1\|_{H^{s+1}}^2+(1+t)^{-\frac{1-2\alpha}{2-2\alpha}}\int_0^{t/2}(1+\eta)^{-\frac{n-4\alpha+2}{2-2\alpha}}\mathrm{d}\eta\,\|\psi\|_{X_s(T)}^2\notag\\
&\qquad\quad+(1+t)^{j-1-\frac{n+j-1-\varepsilon_1}{2-2\alpha}}\int_{t/2}^t(1+t-\eta)^{-(j+2)+\frac{j+1}{2-2\alpha}}\mathrm{d}\eta\,\|\psi\|_{X_s(T)}^2\notag\\
&\qquad\lesssim\|(\psi_0,\psi_1,\psi_2)\|_{\ml{A}_{s,j}^{(1)}}+\|(\psi_0,\psi_1)\|_{H^{s+2}\times H^{s+1}}^2+(1+t)^{-\frac{1-2\alpha}{2-2\alpha}}\left(1+(1+t)^{j-\frac{n+j-\varepsilon_1}{2-2\alpha}}\right)\|\psi\|_{X_s(T)}^2\notag\\
&\qquad\lesssim\|(\psi_0,\psi_1,\psi_2)\|_{\ml{A}_{s,j}^{(1)}}+\|(\psi_0,\psi_1)\|_{H^{s+2}\times H^{s+1}}^2+\|\psi\|_{X_s(T)}^2.\label{Est-08}
\end{align}

 Let us turn to the second-order time-derivative. By the $(L^2\cap L^1)-L^2$ estimates in \eqref{C1} and the $L^2-L^2$ estimates in \eqref{C2}, we get
\begin{align*}
&(1+t)^{3+\frac{2s+n-4}{2(2-2\alpha)}}\|\partial_t^2N\psi(t,\cdot)\|_{\dot{H}^s}\\
&\qquad\lesssim\|(\psi_0,\psi_1,\psi_2)\|_{\ml{A}_{s,j}^{(1)}}+(1+t)^{3+\frac{2s+n-4}{2(2-2\alpha)}}\left\||\psi_t(t,\cdot)|^2+|\nabla\psi(t,\cdot)|^2\right\|_{\dot{H}^s}+\|(\psi_0,\psi_1)\|_{H^{s+2}\times H^{s+1}}^2\\
&\qquad\quad+(1+t)^{3+\frac{2s+n-4}{2(2-2\alpha)}}\int_0^{t/2}(1+t-\eta)^{-3-\frac{2s+n-4\alpha}{2(2-2\alpha)}}(1+\eta)^{-\frac{n-4\alpha+2}{2-2\alpha}}\mathrm{d}\eta\,\|\psi\|_{X_s(T)}^2\\
&\qquad\quad+(1+t)^{3+\frac{2s+n-4}{2(2-2\alpha)}}\int_{t/2}^t(1+t-\eta)^{-2-\frac{1-4\alpha}{2-2\alpha}}(1+\eta)^{-2-\frac{2s+3n-2-2\varepsilon_1}{2(2-2\alpha)}}\mathrm{d}\eta\,\|\psi\|_{X_s(T)}^2.
\end{align*} 

So, by using $(1+t-\eta)\approx (1+t)$ when $\eta\in[0,t/2]$ and $(1+\eta)\approx (1+t)$ when $\eta\in[t/2,t]$, we derive
\begin{align*}
	&(1+t)^{3+\frac{2s+n-4}{2(2-2\alpha)}}\|\partial_t^2N\psi(t,\cdot)\|_{\dot{H}^s}\\
	&\qquad\lesssim \|(\psi_0,\psi_1,\psi_2)\|_{\ml{A}_{s,2}^{(1)}}+\|(\psi_0,\psi_1)\|_{H^{s+2}\times H^{s+1}}^2+(1+t)^{-\frac{n-2+2\alpha-\varepsilon_1}{2-2\alpha}}\|\psi\|_{X_s(T)}^2\\
	&\qquad\quad+(1+t)^{-1}\int_0^{t/2}(1+\eta)^{-\frac{n-4\alpha+2}{2-2\alpha}}\mathrm{d}\eta\,\|\psi\|_{X_s(T)}^2\\
	&\qquad\quad+(1+t)^{-\frac{n+2\alpha-1-\varepsilon_1}{2-2\alpha}}\int_{t/2}^t(1+t-\eta)^{-2-\frac{1-4\alpha}{2-2\alpha}}\mathrm{d}\eta\,\|\psi\|_{X_s(T)}^2\\
	&\qquad\lesssim \|(\psi_0,\psi_1,\psi_2)\|_{\ml{A}_{s,2}^{(1)}}+\|(\psi_0,\psi_1)\|_{H^{s+2}\times H^{s+1}}^2+(1+t)^{-\min\left\{1,\frac{n+2\alpha-2-\varepsilon_1}{2-2\alpha}\right\}}\|\psi\|_{X_s(T)}^2\\
	&\qquad\lesssim \|(\psi_0,\psi_1,\psi_2)\|_{\ml{A}_{s,2}^{(1)}}+\|(\psi_0,\psi_1)\|_{H^{s+2}\times H^{s+1}}^2+\|\psi\|_{X_s(T)}^2.
\end{align*}
We point out explicitly that the previous estimates for $\|\partial_t^2N\psi(t,\cdot)\|_{\dot{H}^s}$ can be repeated in the same manner also for $\|\partial_t^2N\psi(t,\cdot)\|_{L^2}$ taking formally $s=0$.
According to $\ml{A}_{\sigma,k}^{(1)}\subset \ml{A}_{s,0}^{(1)}$ for $k=0,1,2$ and $\sigma=0,s$, combining all previous estimates, we complete the proof of the desired estimate in \eqref{Crucial-01}.

Now, we sketch the main steps in the derivation of \eqref{Crucial-02}, emphasizing the differences with the proof of \eqref{Crucial-01}. We begin by remarking that for $j=0,1$
\begin{align*}
\partial_t^j N\psi(t,x)- \partial_t^j N\bar{\psi}(t,x)=\int_0^t\partial_t^{j+1}K_2(t-\eta,x)\ast\left(\ml{N}_{\psi}(\eta,x)-\ml{N}_{\bar{\psi}}(\eta,x)\right)\mathrm{d}\eta
\end{align*}
while for the second-order derivative, we have an additional term besides the integral term, namely,
\begin{align*}
	\partial_t^2 N\psi(t,x)- \partial_t^2 N\bar{\psi}(t,x)=\ml{N}_{\psi}(t,x)-\ml{N}_{\bar{\psi}}(t,x)+ \int_0^t\partial_t^{j+1}K_2(t-\eta,x)\ast\left(\ml{N}_{\psi}(\eta,x)-\ml{N}_{\bar{\psi}}(\eta,x)\right)\mathrm{d}\eta
\end{align*}

 Observing that
\begin{align*}
\left||\psi_t|^2-|\bar{\psi}_t|^2\right|+\left||\nabla\psi|^2-|\nabla\bar{\psi}|^2\right|&\lesssim \left(|\psi_t|+|\bar{\psi}_t|\right)|\psi_t-\bar{\psi}_t|+ \left(|\nabla \psi|+|\nabla \bar{\psi}|\right)|\nabla \psi-\nabla \bar{\psi}|,
\end{align*}
employing the same approach used in Section \ref{Subsection: preliminary and apriori est } together with H\"older's inequality, we find
\begin{align*}
\left\|\ml{N}_{\psi}(\eta,\cdot)-\ml{N}_{\bar{\psi}}(\eta,\cdot)\right\|_{L^1}&\lesssim (1+\eta)^{-\frac{n-4\alpha+2}{2-2\alpha}}\left(\|\psi\|_{X_s(T)}+\|\bar{\psi}\|_{X_s(T)}\right)\|\psi-\bar{\psi}\|_{X_s(T)},\\
\left\|\ml{N}_{\psi}(\eta,\cdot)-\ml{N}_{\bar{\psi}}(\eta,\cdot)\right\|_{L^2}&\lesssim (1+\eta)^{-\frac{3n-8\alpha+4}{2(2-2\alpha)}}\left(\|\psi\|_{X_s(T)}+\|\bar{\psi}\|_{X_s(T)}\right)\|\psi-\bar{\psi}\|_{X_s(T)},\\
\left\|\ml{N}_{\psi}(\eta,\cdot)-\ml{N}_{\bar{\psi}}(\eta,\cdot)\right\|_{\dot{H}^{s+1}}&\lesssim
(1+\eta)^{-2-\frac{2s+3n-2-2\varepsilon_1}{2(2-2\alpha)}}
\left(\|\psi\|_{X_s(T)}+\|\bar{\psi}\|_{X_s(T)}\right)\|\psi-\bar{\psi}\|_{X_s(T)}.
\end{align*}
 Eventually, employing the same type of estimates used in the derivation of \eqref{Crucial-01} (namely, $(\dot{H}^{s}\cap L^1)-\dot{H}^s$ and $(L^2\cap L^1)-L^2$ estimates in $[0,t/2]$ and $\dot{H}^s-\dot{H}^s$ and $L^2-L^2$ estimates in $[t/2,t]$ from Proposition \ref{PROP-01} and Corollary \ref{CORO-01}, respectively) and using the uniform boundedness of the $\eta$-integrals seen in the previous computations, we conclude the proof of \eqref{Crucial-02}.

Hence, we proved the global (in time) existence of the small data Sobolev solutions in $ X_s(\infty)$. As a byproduct, from the smallness of the data, since the estimate
\begin{align*}
\|\psi\|_{X_s(T)}\lesssim\|(\psi_0,\psi_1,\psi_2)\|_{\ml{A}^{(1)}_{s,0}}+\|(\psi_0,\psi_1)\|_{H^{s+2}\times H^{s+1}}^2 \lesssim  \|(\psi_0,\psi_1,\psi_2)\|_{\ml{A}^{(1)}_{s,0}}
\end{align*}
holds for any $T>0$, we conclude the validity of the upper bound estimates in \eqref{Prof-02} for $\|\partial_t^j\psi(t,\cdot)\|_{\dot{H}^{s+2-j}}$, with $j=0,1,2$.

\subsubsection{Asymptotic profiles and lower bound estimates of the solutions}

In this subsection, given the global solution $\psi\in X_s(\infty)$ to \eqref{Eq-Cattaneo-Fractional}, we establish that $\mathcal{G}_{1,j}|\mb{B}_0|$ is the asymptotic profile of $\partial_t ^j \psi$ in $L^2$ and in $\dot{H}^{s+2-j}$ as $t\to \infty$. We recall that $\mb{B}_0$ consists of two terms.
From \eqref{def B0 profile}, \eqref{A5} and \eqref{Rep-non}, we obtain
\begin{align*}
\partial_t^j\psi-\ml{G}_{1,j}\mb{B}_0&=\left(\partial_t^j\psi^{\lin}-\ml{G}_{1,j}P_{\psi_1+\tau\psi_2}\right)+\int_0^t\partial_t^{j+1}K_2(t-\eta,x)\ast\ml{N}_{\psi}(\eta,x)\mathrm{d}\eta\\
&\quad-\left[\partial_t^jK_2(t,|D|)\left(\frac{B}{2Ac_0^2}|\psi_1|^2+|\nabla\psi_0|^2\right)-\tau\ml{G}_{1,j}P_{\frac{B}{2Ac_0^2}|\psi_1|^2+|\nabla\psi_0|^2}\right].
\end{align*}

From the error estimates for the linear part in \eqref{Prof-01}, we know that
\begin{align}\label{C5}
\left\|\left(\partial_t^j\psi^{\lin}-\ml{G}_{1,j}P_{\psi_1+\tau\psi_2}\right)(t,\cdot)\right\|_{\dot{H}^{s+2-j}}&=o\left(t^{-(j+1)-\frac{2s+n-2j}{2(2-2\alpha)}}\right)
\end{align}
as $t\gg1$. Taking vanishing first and second data in \eqref{err-01} and \eqref{err-02}, one arrives at
\begin{align}\label{B2}
&\left\|\left(\partial_t^jK_2(t,|D|)-\tau\ml{G}_{1,j}(t,|D|)\right)\left(\frac{B}{2Ac_0^2}|\psi_1|^2+|\nabla\psi_0|^2\right)\right\|_{\dot{H}^{s+2-j}}\notag\\
&\quad+\left\|\tau\ml{G}_{1,j}(t,|D|)\left(\frac{B}{2Ac_0^2}|\psi_1|^2+|\nabla\psi_0|^2\right)-\tau\ml{G}_{1,j}(t,\cdot)P_{\frac{B}{2Ac_0^2}|\psi_1|^2+|\nabla\psi_0|^2}\right\|_{\dot{H}^{s+2-j}}\notag\\
&=o\left(t^{-(j+1)-\frac{2s+n-2j}{2(2-2\alpha)}}\right)
\end{align}
for large-time $t\gg1$, where we used the assumption $(\psi_0,\psi_1)\in H^{s+2}\times H^{s+1}$. Moreover, in the chain of inequalities that led to  \eqref{Est-07} and \eqref{Est-08}, we already obtained
\begin{align}\label{B3}
&\left\|\int_0^t\partial_t^{j+1}K_2(t-\eta,\cdot)\ast\ml{N}_{\psi}(t,\cdot)\mathrm{d}\eta\right\|_{\dot{H}^{\sigma+2-j}}\notag\\
&\qquad\lesssim\begin{cases}
(1+t)^{-1}(1+t)^{-j-\frac{n-4\alpha}{2(2-2\alpha)}}\|(\psi_0,\psi_1,\psi_2)\|_{\ml{A}^{(1)}_{s,0}}^2&\mbox{when}\ \ \sigma=j-2,\\
(1+t)^{-\frac{1-2\alpha}{2-2\alpha}}(1+t)^{-(j+1)-\frac{2s+n-2j}{2(2-2\alpha)}}\|(\psi_0,\psi_1,\psi_2)\|_{\ml{A}^{(1)}_{s,0}}^2&\mbox{when}\ \ \sigma=s,
\end{cases}
\end{align}
where we used  $\|\psi\|_{X_s(T)}\lesssim \|(\psi_0,\psi_1,\psi_2)\|_{\ml{A}^{(1)}_{s,0}}$. Thus, we proved \eqref{Prof-01} for $j=0,1$. Applying  \eqref{Prof-01} together with Minkowski inequality and Lemma \ref{Lemma-01}, we find
\begin{align}
\|\partial_t^j\psi(t,\cdot)\|_{\dot{H}^{s+2-j}}&\gtrsim\|\ml{G}_{1,j}(t,\cdot)\|_{\dot{H}^{s+2-j}}|\mb{B}_0|- \left\|\left(\partial_t^j\psi-\ml{G}_{1,j}\mb{B}_0\right)(t,\cdot)\right\|_{\dot{H}^{s+2-j}}\notag\\
&\gtrsim t^{-(j+1)-\frac{2s+n-2j}{2(2-2\alpha)}}|\mb{B}_0|\label{C4}
\end{align}
as $t\gg1$ and $|\mb{B}_0|\neq0$, which is exactly \eqref{Prof-03} for $j=0,1$.

 For the second-order derivative of the solution, we can employ the same method that we just used for the lower order derivatives. Indeed, \eqref{C5} and \eqref{B2} are satisfied for $j=2$ as well. Then, it remains to estimate the additional term $\ml{N}_{\psi}(t,x)$ in \eqref{Cn} and the integral term. Choosing $\varepsilon_1 > 0$ sufficiently small, we actually already showed that
 
 \begin{align}
 \left\|\ml{N}_{\psi}(t,\cdot)\right\|_{\dot{H}^\sigma} & \lesssim (1+t)^{-\frac{n-2+2\alpha-\varepsilon_1}{2-2\alpha}} (1+t)^{-3-\frac{2s+n-4}{2(2-2\alpha)}}  \|(\psi_0,\psi_1,\psi_2)\|_{\ml{A}^{(1)}_{s,0}}^2,\notag\\
 \left\|\int_0^t\partial_t^{3}K_2(t-\eta,\cdot)\ast\ml{N}_{\psi}(\eta,\cdot)\mathrm{d}\eta\right\|_{\dot{H}^{\sigma}}&\lesssim(1+t)^{-\min\left\{1,\frac{n+2\alpha-1+2\varepsilon_1}{2-2\alpha}\right\}}(1+t)^{-3-\frac{2s+n-4}{2(2-2\alpha)}}\|(\psi_0,\psi_1,\psi_2)\|_{\ml{A}^{(1)}_{s,0}}^2,\label{C3}
 \end{align}
for $\sigma\in\{0,s\}$. 
Hence, \eqref{Prof-01}  is proved also for $j=2$, while the lower bound estimates can be derived similarly as we did in \eqref{C4}. The proof of Theorem \ref{Thm-01} is completed.

\begin{remark} \label{Remark alpha=0 and n=2}
In Theorem \ref{Thm-01}, we excluded the limit case $\alpha=0$ when $n=2$. This is due to the fact that, as we have seen in the proof, the $\dot{H}^s$ norm of the nonlinear term $\mathcal{N}_\psi$ appearing in $\partial_t^2 \psi^{\mathrm{non}}$, see \eqref{Cn}, grows exactly as the one for the solution to the linearized model and, therefore, we are not able to describe the asymptotic profile of $\partial_t^2 \psi$. Nevertheless, for $\alpha=0$ when $n=2$, taking $s>0$ and repeating the same kind of proof seen in this section, it is possible to prove that there exists a constant $\epsilon>0$ such that for all $(\psi_0,\psi_1,\psi_2)\in\ml{A}_{s,0}^{(1)}$ with $\|(\psi_0,\psi_1,\psi_2)\|_{\ml{A}_{s,0}^{(1)}}\leqslant\epsilon$, there is a uniquely determined Sobolev solution
\begin{align*}
	\psi\in\ml{C}([0,\infty),H^{s+2})\cap\ml{C}^1([0,\infty),H^{s+1})
\end{align*} that satisfies \eqref{Prof-02}, \eqref{Prof-01} and \eqref{Prof-03}  for $j=0,1$ and either $s_0=j-2$ or $s_0=s$.
\end{remark}

\subsection{Diffusion wave case: $\alpha\in(1/2,1]$}\label{Sub-GESES-DIFFUSION-WAVE}


 The approach for the proof of Theorem \ref{Thm-02} is analogous to the one seen in Section \ref{Subsection anomalous diff case} for Theorem \ref{Thm-01}. We sketch the main steps in the proof  of Theorem \ref{Thm-02}, highlighting the differences with the previous case. When $\alpha\in(1/2,1]$ and $n\geqslant 3$, we introduce for $T>0$ the weighted Sobolev space
\begin{align*}
Y_s(T):=\ml{C}([0,T],H^{s+2})\cap\ml{C}^1([0,T],H^{s+1})\cap\ml{C}^2([0,T],H^{s+1-2\alpha})
\end{align*}
equipped with the norm
\begin{align*}
\|\psi\|_{Y_s(T)}&:=\sup\limits_{t\in[0,T]}\bigg(\sum\limits_{j=0,1,2}(1+t)^{\frac{n-2+2j}{4\alpha}}\|\partial_t^j\psi(t,\cdot)\|_{L^2}+(1+t)^{\frac{2s+n+2}{4\alpha}}\sum\limits_{j=0,1}\|\partial_t^j\psi(t,\cdot)\|_{\dot{H}^{s+2-j}}\\
&\qquad\qquad\quad+(1+t)^{\frac{2s+n+4-4\alpha}{4\alpha}}\|\partial_t^2\psi(t,\cdot)\|_{\dot{H}^{s+1-2\alpha}}\bigg),
\end{align*}
where the regularity parameter $s$ satisfies $s>n/2-1$.

 Then, with the help of some fractional tools from the harmonic analysis (see the lemmas in Appendix \ref{Appendix}), similarly as in Section \ref{Subsection: preliminary and apriori est }, we may deduce the following estimates for the nonlinear terms:
\begin{align}
\|\,|\psi_t(\eta,\cdot)|^2\|_{L^1}+\|\,|\nabla\psi(\eta,\cdot)|^2\|_{L^1}&\lesssim(1+\eta)^{-\frac{n}{2\alpha}}\|\psi\|_{Y_s(\eta)}^2,\label{C7}\\
\|\,|\psi_t(\eta,\cdot)|^2\|_{L^2}+\|\,|\nabla\psi(\eta,\cdot)|^2\|_{L^2}&\lesssim (1+\eta)^{-\frac{3n}{4\alpha}}\|\psi\|_{Y_s(\eta)}^2,\label{C8}\\
\|\,|\psi_t(\eta,\cdot)|^2\|_{\dot{H}^{s+1}}+\|\,|\nabla\psi(\eta,\cdot)|^2\|_{\dot{H}^{s+1}}&\lesssim(1+\eta)^{-\frac{2s+3n+2-2\varepsilon_1}{4\alpha}}\|\psi\|_{Y_s(\eta)}^2,\label{C9}
\end{align}
with a sufficiently small constant $\varepsilon_1>0$. 
Let us begin with the estimates of $\partial_t^j N\psi$ for $j=0,1$. Combining with \eqref{C7}-\eqref{C9}  with the $(L^2\cap L^1)-L^2$ estimates from Proposition \ref{PROP-02} and the $L^2-L^2$ estimates from Corollary \ref{CORO-02}, we  derive
\begin{align}
&(1+t)^{\frac{n-2+2j}{4\alpha}}\|\partial_t^j\psi^{\non}(t,\cdot)\|_{L^2}\notag\\
&\qquad\lesssim\|\psi_0\|_{H^{s+2}}^2+\|\psi_1\|_{H^{s+2\alpha}}^2+(1+t)^{\frac{n-2+2j}{4\alpha}}\int_0^{t/2}(1+t-\eta)^{-\frac{n+2j}{4\alpha}}(1+\eta)^{-\frac{n}{2\alpha}}\mathrm{d}\eta\,\|\psi\|_{Y_s(T)}^2\notag\\
&\qquad\quad+(1+t)^{\frac{n-2+2j}{4\alpha}}\int_{t/2}^t(1+t-\eta)^{-\frac{1}{2\alpha}}(1+\eta)^{-\frac{3n}{4\alpha}}\mathrm{d}\eta\,\|\psi\|_{Y_s(T)}^2\notag\\
&\qquad\lesssim\|(\psi_0,\psi_1)\|_{H^{s+2}\times H^{s+2\alpha}}^2+(1+t)^{-\frac{1}{2\alpha}} \|\psi\|_{Y_s(T)}^2,\label{A6} 
\end{align}
due to $n>2\alpha$.
Analogously, by using the $(\dot{H}^{s}\cap L^1)-\dot{H}^{s+2-j}$ estimates from Proposition \ref{PROP-02} and the $\dot{H}^{s}-\dot{H}^{s+2-j}$ estimates from Corollary \ref{CORO-02}, we have
\begin{align}\label{A7}
(1+t)^{\frac{2s+n+2}{4\alpha}}\|\partial_t^j\psi^{\non}(t,\cdot)\|_{\dot{H}^{s+2-j}}\lesssim\|(\psi_0,\psi_1)\|_{H^{s+2}\times H^{s+2\alpha}}^2+(1+t)^{-\frac{1}{2\alpha}} \|\psi\|_{Y_s(T)}^2. 
\end{align}
In an analogous way, we get
\begin{align*}
(1+t)^{\frac{n+2}{4\alpha}}\|\partial_t^2\psi^{\non}(t,\cdot)\|_{L^2}&\lesssim\|(\psi_0,\psi_1)\|_{H^{s+2}\times H^{s+2\alpha}}^2+(1+t)^{-\frac{n-1}{2\alpha}}\|\psi\|_{Y_s(T)}^2+(1+t)^{-\frac{1}{2\alpha}}\|\psi\|_{Y_s(T)}^2.
\end{align*} 

Let us estimate for $\partial_t^2\psi^{\non}(t,\cdot)$ in $\dot{H}^{s+1-2\alpha}$. We stress that the next estimate is the reason why we have  a loss of regularity in comparison with the linearized problem. Applying \eqref{C6} for the term $-\partial_t^2 K_2(t,x)\ast \mathcal{N}_\psi(0,x)$, \eqref{C10} for $\eta\in [0,t/2]$ and \eqref{C11} for $\eta\in[t/2,t]$ we arrive at
\begin{align*}
	&(1+t)^{\frac{2s+n+4-4\alpha}{4\alpha}}\|\partial_t^2\psi^{\non}(t,\cdot)\|_{\dot{H}^{s+1-2\alpha}}\\
	&\qquad\lesssim (1+t)^{\frac{2s+n+4-4\alpha}{4\alpha}}\left\||\psi_t(t,\cdot)|^2+|\nabla\psi(t,\cdot)|^2\right\|_{\dot{H}^{s+1-2\alpha}}+\left\||\psi_1|^2+|\nabla\psi_0|^2\right\|_{\dot{H}^{s+1-2\alpha}\cap L^1}\\
	&\qquad\quad+(1+t)^{\frac{2s+n+4-4\alpha}{4\alpha}}\int_0^{t/2}(1+t-\eta)^{-\frac{2s+n+6-4\alpha}{4\alpha}}\left\||\psi_t(\eta,\cdot)|^2+|\nabla\psi(\eta,\cdot)|^2\right\|_{\dot{H}^{s+1}\cap L^1}\mathrm{d}\eta\\
	&\qquad\quad+(1+t)^{\frac{2s+n+4-4\alpha}{4\alpha}}\int_{t/2}^t(1+t-\eta)^{1-\frac{1}{\alpha}}\left\||\psi_t(\eta,\cdot)|^2+|\nabla\psi(\eta,\cdot)|^2\right\|_{\dot{H}^{s+1}}\mathrm{d}\eta.
\end{align*}
Thus, by \eqref{C7}-\eqref{C9} we get
\begin{align*}
	(1+t)^{\frac{2s+n+4-4\alpha}{4\alpha}}\|\partial_t^2\psi^{\non}(t,\cdot)\|_{\dot{H}^{s+1-2\alpha}}&\lesssim (1+t)^{-\frac{n-1-\varepsilon_1}{2\alpha}}\|\psi\|_{Y_s(T)}^2+\|(\psi_0,\psi_1)\|_{H^{s+2}\times H^{s+2\alpha}}^2\\
	&\quad+(1+t)^{-\frac{1}{2\alpha}}\|\psi\|_{Y_s(T)}^2+(1+t)^{-\frac{n-2\alpha+1-\varepsilon_1}{2\alpha}}\|\psi\|_{Y_s(T)}^2\\
	&\lesssim\|(\psi_0,\psi_1)\|_{H^{s+2}\times H^{s+2\alpha}}^2+(1+t)^{-\frac{1}{2\alpha}}\|\psi\|_{Y_s(T)}^2
\end{align*}
by taking $\varepsilon_1\downarrow0$. Therefore, combining the previous estimates, we complete the proofs \eqref{Crucial-01} and \eqref{Crucial-02} in $Y_s(T)$. Consequently, there exists a unique global (in time) solution in $Y_s(\infty)$ when $\alpha\in(1/2,1]$.

By \eqref{Est-06} we have
	\begin{align*}
		\left\|\left(\partial_t^j\psi^{\lin}-\ml{G}_{2,j}P_{\psi_1+\tau\psi_2}\right)(t,\cdot)\right\|_{\dot{H}^{s+2-j}}=o\left(t^{-\frac{n-2+2j}{4\alpha}}\right),
	\end{align*}
 furthermore, from \eqref{A6} and \eqref{A7} for $j=0,1$, it follows that 
\begin{align*}
	\left\|\int_0^t\partial_t^{j+1}K_2(t-\eta,\cdot)\ast\ml{N}_{\psi}(\eta,\cdot)\mathrm{d}\eta\right\|_{\dot{H}^{s+2-j}}\lesssim (1+t)^{-\frac{1}{2\alpha}}(1+t)^{-\frac{n-2+2j}{4\alpha}} \|\psi\|_{Y_s(T)}^2.
\end{align*}
Hence, we derived the asymptotic profile of $\partial_t \psi$ in Theorem \ref{Thm-02}. The profile for the second-order derivative of the solution may be derived in the same way.

 Eventually, thanks to the lower bound estimates
\begin{align*}
	\|\ml{G}_{2,j}(t,\cdot)\|_{\dot{H}^{s+2-j}}&\gtrsim t^{-\frac{n-2+2j}{4\alpha}},\\
	\|\ml{G}_{2,2}(t,\cdot)\|_{\dot{H}^{s+1-2\alpha}}&\gtrsim t^{-3-\frac{2s+n-4}{2(2-2\alpha)}},
\end{align*}
 for large-time $t\gg1$, we completed the proof of Theorem \ref{Thm-02}.
\subsection{Threshold case: $\alpha=1/2$}


For $n\geqslant 2$ we consider the threshold case $\alpha=1/2$. We introduce the time-weighted Sobolev space 
\begin{align*}
	Z_s(T):=\ml{C}([0,T],H^{s+2})\cap \ml{C}^1([0,T],H^{s+1})\cap \ml{C}^2([0,T],H^s)
\end{align*}for $T>0$ with the norm
\begin{align*}
	\|\psi\|_{Z_s(T)}:=\sup\limits_{t\in[0,T]}\left(\,\sum\limits_{j=0,1,2}(1+t)^{j-1+\frac{n}{2}}\|\partial_t^j\psi(t,\cdot)\|_{L^2}+(1+t)^{s+1+\frac{n}{2}}\sum\limits_{j=0,1,2}\|\partial_t^j\psi(t,\cdot)\|_{\dot{H}^{s+2-j}}\right),
\end{align*}
where $s>n/2-1$.

 By following the same steps as in the case $\alpha\in(1/2,1]$, we notice that the nonlinear terms satisfy the estimates \eqref{C7}-\eqref{C9} as well. Moreover, since in $X_s(T)$ (space for the solutions in the case $\alpha\in[0,1/2)$) and in $Z_s(T)$ the same regularity for $\partial_t^j \psi$ appear for $j=0,1,2$, we can repeat the same computations changing only the decay rates. For brevity, we omit the computational details for this proof.

\section{Blow-up of solutions to the nonlinear inviscid Cattaneo-type model} \label{Section blowup result}

\subsection{Construction of iteration frame}
Let us recall that $\Phi$ defined in \eqref{BL-01} is positive and smooth function, which solves $\Delta\Phi=\Phi$ and asymptotically satisfies
\begin{align*}
\Phi(x)\sim|x|^{-\frac{n-1}{2}}\mathrm{e}^{|x|}\ \ \mbox{as}\ \ |x|\to\infty.
\end{align*}
We introduce now the test function $$\Upsilon=\Upsilon(t,x):=\mathrm{e}^{-c_0t}\Phi(x),$$ which is a solution to the adjoint equation for the linearized inviscid Cattaneo-type model
\begin{align}\label{P1}
	(-\tau\partial_t+\ml{I})(\partial_t^2-c_0^2\Delta)\Upsilon=0,
\end{align}
and fulfills
\begin{align}\label{P8}
\int_{B_{R+c_0t}}\Upsilon(t,x)\mathrm{d}x\leqslant C_1(R+c_0t)^{\frac{n-1}{2}}
\end{align}
for any $t\geqslant 0$ and $n\geqslant 1$ (cf. \cite[Equation (3.5)]{Lai-Takamura}), where $C_1=C_1(n,R,c_0)$ is a positive constant.

 Since the third-order partial differential operator in the inviscid Cattaneo-type model  \eqref{Eq-Cattaneo-inviscid} admits the factorization $(\tau\partial_t+\ml{I})(\partial_t^2-c_0^2\Delta)$, assuming compactly supported initial data, that is $\mathrm{supp}\, \psi_j\subset B_R$ for $j=0,1,2$ and for some positive constant $R$, then a local (in time) energy solution $\psi=\psi(t,x)$ has support contained in the forward cone $\{(t,x)\in[0,T)\times\mb{R}^n:|x|\leqslant R+c_0t \}$. This support condition for the solution allows us to consider as test function in Definition \ref{Defn-energy-solution}  even $\phi\in \mathcal{C}^\infty ([0,T)\times\mathbb{R}^n )$, i.e. without requiring any compactness for the support of the test function $\phi$. 

Therefore, we take $\Upsilon$ as test function in \eqref{P0}. Since \eqref{P1} holds, we have
\begin{align}\label{P2}
&\int_{\mb{R}^n}\big(\tau\psi_{tt}(t,x)+(\tau c_0+1)\psi_t(t,x)+c_0\psi(t,x)\big)\Upsilon(t,x)\mathrm{d}x\notag\\
&\qquad=c_0\int_0^t\int_{\mb{R}^n}\ml{N}_{\psi}(\eta,x)\Upsilon(\eta,x)\mathrm{d}x\mathrm{d}\eta+\int_{\mb{R}^n}\ml{N}_{\psi}(t,x)\Upsilon(t,x)\mathrm{d}x+\epsilon_0\ml{L}_0-\epsilon_0^2\ml{L}_1, 
\end{align}
where we denoted
\begin{align*}
	\ml{L}_0&:=\int_{\mb{R}^n}\big(\tau\psi_2(x)+(\tau c_0+1)\psi_1(x)+c_0\psi_0(x)\big)\Phi(x)\mathrm{d}x,\\
	\ml{L}_1&:=\int_{\mb{R}^n}\left(\frac{B}{2Ac_0^2}|\psi_1(x)|^2+|\nabla\psi_0(x)|^2\right)\Phi(x)\mathrm{d}x.
\end{align*}
Let us introduce the following time-dependent functional:
\begin{align*}
\ml{D}_0(t):=\int_{\mb{R}^n}\psi_t(t,x)\Upsilon(t,x)\mathrm{d}x.
\end{align*}
Using $\ml{D}_0$, we may rewrite \eqref{P2} as
\begin{align}\label{P3}
	&\tau\ml{D}_0'(t)+(2\tau c_0+1)\ml{D}_0(t)+c_0\int_{\mb{R}^n}\psi(t,x)\Upsilon(t,x)\mathrm{d}x\notag\\
	&\qquad=c_0\int_0^t\int_{\mb{R}^n}\ml{N}_{\psi}(\eta,x)\Upsilon(\eta,x)\mathrm{d}x\mathrm{d}\eta+\int_{\mb{R}^n}\ml{N}_{\psi}(t,x)\Upsilon(t,x)\mathrm{d}x+\epsilon_0\ml{L}_0-\epsilon_0^2\ml{L}_1.
\end{align}
Differentiating \eqref{P3} with respect to $t$, we arrive at
\begin{align}\label{P4}
&\tau\ml{D}''_0(t)+(2\tau c_0+1)\ml{D}'_0(t)+c_0 \ml{D}_0(t)-c_0^2\int_{\mb{R}^n}\psi(t,x)\Upsilon(t,x)\mathrm{d}x\notag\\
&\qquad=\left(c_0+\frac{\mathrm{d}}{\mathrm{d}t}\right)\int_{\mb{R}^n}\ml{N}_{\psi}(t,x)\Upsilon(t,x)\mathrm{d}x.
\end{align}
Multiplying \eqref{P3} by $c_0$ and adding the resulting equality to \eqref{P4}, we find
\begin{align*}
	&\tau\ml{D}''_0(t)+(3\tau c_0+1)\ml{D}'_0(t)+2c_0(\tau c_0+1)\ml{D}_0(t)\notag\\
	&\qquad=c_0^2\int_0^t\int_{\mb{R}^n}\ml{N}_{\psi}(\eta,x)\Upsilon(\eta,x)\mathrm{d}x\mathrm{d}\eta+\left(2c_0+\frac{\mathrm{d}}{\mathrm{d}t}\right)\int_{\mb{R}^n}\ml{N}_{\psi}(t,x)\Upsilon(t,x)\mathrm{d}x+\epsilon_0 c_0\ml{L}_0-\epsilon_0^2 c_0\ml{L}_1.
\end{align*}
In order to derive an iteration frame for the functional $\ml{D}_0$, let us introduce a first auxiliary functional
\begin{align*}
\ml{D}_1(t)&:=\ml{D}'_0(t)+2 c_0\ml{D}_0(t)-\frac{\tau c_0-1}{\tau^2}\int_0^t\int_{\mb{R}^n}\ml{N}_{\psi}(\eta,x)\Upsilon(\eta,x)\mathrm{d}x\mathrm{d}\eta-\frac{1}{\tau}\int_{\mb{R}^n}\ml{N}_{\psi}(t,x)\Upsilon(t,x)\mathrm{d}x -\epsilon_0 I_0,
\end{align*} where
\begin{align*}
I_0:= \frac{c_0}{\tau c_0+1}\int_{\mb{R}^n}\Big[ \tau \psi_2(x)+ (\tau c_0+1)\psi_1(x)+c_0 \psi_0(x) -\epsilon_0 \Big(\frac{B}{2Ac_0^2}|\psi_1(x)|^2+|\nabla\psi_0(x)|^2\Big)\Big]\Phi(x)\mathrm{d}x.
\end{align*} 
 This functional satisfies
\begin{align*}
\tau\ml{D}_1'(t)+(\tau c_0+1)\ml{D}_1(t)\geqslant \frac{1}{\tau^2}\int_0^t\int_{\mb{R}^n}\ml{N}_{\psi}(\eta,x)\Upsilon(\eta,x)\mathrm{d}x\mathrm{d}\eta\geqslant 0,
\end{align*} since both the nonlinearity and the function $\Upsilon$ are nonnegative.
From the previous inequality we get immediately $\ml{D}_1(t)\geqslant \mathrm{e}^{-(c_0+\frac{1}{\tau})t}\ml{D}_1(0)$ for any $t\in [0,T)$. Thanks to \eqref{thm bu data 3} the initial value $\ml{D}_1(0)$ is nonnegative and, consequently, $\ml{D}_1(t)\geqslant 0$ as well. From the definition of $\ml{D}_1(t)$ it follows that
\begin{align}\label{P7}
& \ml{D}'_0(t)+2c_0 \ml{D}_0(t)\geqslant\frac{\tau c_0-1}{\tau^2}\int_0^t\int_{\mb{R}^n}\ml{N}_{\psi}(\eta,x)\Upsilon(\eta,x)\mathrm{d}x\mathrm{d}\eta+\frac{1}{\tau}\int_{\mb{R}^n}\ml{N}_{\psi}(t,x)\Upsilon(t,x)\mathrm{d}x +\epsilon_0 I_0.
\end{align}
Now, we introduce a second auxiliary functional
\begin{align*}
\ml{D}_2(t):=\ml{D}_0(t)-\frac{\tau c_0-1}{2 c_0\tau^2}\int_0^t\int_{\mb{R}^n}\ml{N}_{\psi}(\eta,x)\Upsilon(\eta,x)\mathrm{d}x\mathrm{d}\eta,
\end{align*}
which fulfills
\begin{align*}
	& \ml{D}'_2(t)+2c_0\ml{D}_2(t) \geqslant \frac{\tau c_0+1}{2 c_0\tau^2}\int_{\mb{R}^n}\ml{N}_{\psi}(t,x)\Upsilon(t,x)\mathrm{d}x +\epsilon_0 I_0.
\end{align*} We point out that the assumption in \eqref{thm bu data 2} from Theorem \ref{Thm-Blow-up} guarantees that $I_0>0$. Therefore, from the previous estimate we get $ \ml{D}'_2(t)+2c_0\ml{D}_2(t) \geqslant \epsilon_0 I_0$ which implies in turn that $$\ml{D}_2(t)\geqslant \frac{I_0 \, \epsilon_0}{2c_0}  (1-\mathrm{e}^{-2c_0 t})+\mathrm{e}^{-2c_0 t}\ml{D}_2(0)$$ for any $t\in[0,T)$. 
Because of $\ml{D}_2(0)=\ml{D}_0(0)=\epsilon_0 \int_{\mathbb{R}^n} \psi_1(x) \Phi(x) \mathrm{d}x\geqslant 0$, here we used \eqref{thm bu data 1}, we have just proved that 
\begin{align}\label{D2 nonnegative}
\ml{D}_2(t)\geqslant 0
\end{align}
for any $t\in [0,T)$. 
Being $\ml{D}_2$ a nonnegative function, it follows that
\begin{align}\label{P9}
\ml{D}_0(t)\geqslant \frac{\tau c_0-1}{2 c_0\tau^2}\int_0^t\int_{\mb{R}^n}\ml{N}_{\psi}(\eta,x)\Upsilon(\eta,x)\mathrm{d}x\mathrm{d}\eta.
\end{align} Moreover, employing again \eqref{D2 nonnegative}, for any $t\in [1,T)$ and for a suitable constant $C_2>0$ depending on the Cauchy data, we obtain that
\begin{align} \label{P11bis}
\ml{D}_0(t)\geqslant \ml{D}_2(t)\geqslant C_2 \epsilon_0.
\end{align} 
Combining H\"older's inequality, \eqref{P8} and \eqref{P9}, we find the iteration frame
\begin{align}\label{Iteration-Frame}
\ml{D}_0(t)\geqslant C_3 \int_0^t(R+c_0\eta)^{-\frac{n-1}{2}}(\ml{D}_0(\eta))^2\mathrm{d}\eta
\end{align}
for a suitable constant $C_3>0$. 
In order to prove the blow-up in finite time of $\ml{D}_0$ we consider separately the cases $n=1,2$ from the case $n=3$. Indeed, in the latter case we have a factor $(R+c_0 t)^{-1}$ so we have to handle logarithmic terms as well.

\subsection{Iteration argument when $n=1,2$}
Our blow-up result will be established through an iteration argument, by deriving a sequence of lower bounds for $\ml{D}_0(t)$ as follows:
\begin{align}\label{Seq-Need}
	\ml{D}_0(t)\geqslant Q_j(R+c_0t)^{-\beta_j}(t-1)^{\gamma_j}
\end{align}
for any $t\geqslant 1$ and $j\in\mathbb{N}_0$, where $\{Q_j\}_{j\in\mb{N}_0}$, $\{\beta_j\}_{j\in\mb{N}_0}$ and $\{\gamma_j\}_{j\in\mb{N}_0}$ are sequences of non-negative real numbers that will be determined in the inductive step. According to \eqref{P11bis}, \eqref{Seq-Need} is valid for $j=0$ with
\begin{align*}
	Q_0:=C_2\epsilon_0,\ \ \beta_0=0,\ \ \gamma_0=0.
\end{align*}
Next, let us assume that \eqref{Seq-Need} holds for $j\geqslant0$ and prove it for $j+1$. Plugging \eqref{Seq-Need} into \eqref{Iteration-Frame}, we obtain
\begin{align*}
\ml{D}_0(t)&\geqslant C_3 Q_j^2\int_1^t(R+c_0\eta)^{-\frac{n-1}{2}-2\beta_j}(\eta-1)^{2\gamma_j}\mathrm{d}\eta\\
&\geqslant \frac{C_3Q_j^2}{2\gamma_j+1}(R+c_0t)^{-\frac{n-1}{2}-2\beta_j}(t-1)^{2\gamma_j+1}
\end{align*} 
for any $t\geqslant 1$. In other words, we obtained \eqref{Seq-Need} for $j+1$ assuming that
\begin{align*}
Q_{j+1}:=\frac{C_3}{2\gamma_j+1}Q_j^2,\ \ \beta_{j+1}:=2\beta_j+\frac{n-1}{2},\ \ \gamma_{j+1}:=2\gamma_j+1.
\end{align*}
By applying recursively the previous relations, we find
\begin{align}
\beta_{j}=2^{j}\beta_0+\frac{n-1}{2}(2^j-1)=\frac{n-1}{2}(2^j-1),\ \ 
\gamma_j=2^j\gamma_0+2^j-1=2^{j}-1.\label{P15}
\end{align}
The representations of $\gamma_j$ and $Q_j$ lead to $Q_j\geqslant C_32^{-j}Q_{j-1}^2$. Hence, applying the logarithmic function to both sides of the last inequality and using the resulting relation iteratively, we have
\begin{align*}
\ln Q_j&\geqslant \ln C_3-j\ln 2+2\ln Q_{j-1}\notag\\
&\geqslant\cdots\geqslant\sum\limits_{k=0}^{j-1}2^k\ln C_3-\sum\limits_{k=0}^{j-1}(j-k)2^k\ln 2+2^j\ln Q_0\notag\\
&\geqslant 2^j\left(\ln C_3-2\ln 2+\ln Q_0\right)+j\ln 2+2\ln 2-\ln C_3.
\end{align*}
Choosing $j_0=j_0(n)\in\mb{N}$ to be the smallest non-negative integer such that $j_0\geqslant\ln C_3 /\ln 2-2$, for any $j\geqslant j_0$ we obtain
\begin{align} \label{P16}
	\ln Q_j\geqslant 2^j\ln \left(\frac{C_3C_2}{4}\epsilon_0\right)=:2^j\ln (E_0\epsilon_0).
\end{align}
Combining \eqref{Seq-Need}, \eqref{P15} and \eqref{P16}, we get
\begin{align*}
	\ml{D}_0(t)&\geqslant \exp\big(2^j\ln (E_0\epsilon_0)\big)(R+c_0t)^{-\frac{n-1}{2}(2^j-1)}(t-1)^{2^{j}-1}\\
	&\geqslant \exp\left[2^j\left(\ln(E_0\epsilon_0)-\tfrac{n-1}{2}\ln(R+c_0t)+\ln(t-1)\right)\right](R+c_0t)^{-\frac{n-1}{2}}(t-1)^{-1}
\end{align*}
for any $j\geqslant j_0$ and any $t\geqslant 1$.
\newline
Taking $t\geqslant\max\{R/c_0,2\}$ so that $2c_0t\geqslant R+c_0t$ and $t-1\geqslant t/2$, one finds
\begin{align}\label{P10}
\ml{D}_0(t)\geqslant\exp\left[2^j\ln\left(E_1\epsilon_0\,t^{\frac{3-n}{2}}\right)\right](R+c_0t)^{-\frac{n-1}{2}}(t-1)^{-1}
\end{align}
with $E_1:=E_02^{-(n+1)}c_0^{-(n-1)}>0$. Thus, the power in $t^{\frac{3-n}{2}}$ is positive when $n=1,2$, and is zero when $n=3$ (for this reason we discuss the three-dimensional case in the next subsection). Let us fix $\epsilon_1=\epsilon_1(\psi_0,\psi_1,\psi_2,n,R,\tau,c_0,A,B)>0$ such that $\epsilon_1\leqslant E_1^{-1}\left(\max\{R/c_0,2 \}\right)^{\frac{n-3}{2}}$.
\newline Considering $\epsilon_0\in(0,\epsilon_1]$ and  $t> (E_1\epsilon_0)^{\frac{2}{n-3}}$, it results
\begin{align*}
	t\geqslant \max\left\{R/c_0,2 \right\}\ \ \mbox{and}\  \ \ln\left(E_1\epsilon_0^2\,t^{3-n}\right)>0.
\end{align*}
Hence, for any $\epsilon_0\in(0,\epsilon_1]$ and any $t> (E_1\epsilon_0)^{\frac{2}{n-3}}$ letting $j\to\infty$ in \eqref{P10}, we find that the lower bound for $\ml{D}_0(t)$ is not finite. Therefore, we conclude the blow-up in finite time when $n=1,2$ and, as byproduct of the previous argument, the upper bound estimate $T(\epsilon_0)\lesssim \epsilon_0^{\frac{2}{n-3}}$ for the lifespan of the local (in time) energy solution $\psi$.

\subsection{Iteration argument when $n=3$}
Let us complete the proof in the case $n=3$. The iteration frame is now given by 
\begin{align}\label{P12} 
	\ml{D}_0(t)\geqslant C_3\int_1^t(R+c_0\eta)^{-1}(\ml{D}_0(\eta))^2\mathrm{d}\eta ,
\end{align} while a first lower bound estimate is provided by \eqref{P11bis}.
 Plugging \eqref{P11bis} into \eqref{P12}, we see that the first iteration produces a logarithmic factor, namely,
\begin{align*}
\ml{D}_0(t)\geqslant C_3C_2^2\epsilon_0^2\int_1^t(R+c_0\eta)^{-1}\mathrm{d}\eta=\frac{C_3C_2^2\epsilon_0^2}{c_0}\ln\left(\frac{R+c_0t}{R+c_0}\right)
\end{align*}
for any $t\geqslant1$. 

As suggested by the first iteration, our blow-up result for $n=3$ will be established through the following sequence of logarithmic-type lower bounds:
\begin{align}\label{P14} 
\ml{D}_0(t)\geqslant\widetilde{Q}_j\epsilon_0^{2^j}\left[\ln\left(\frac{R+c_0 t}{R+c_0}\right)\right]^{\ell_j}
\end{align}
for any $t\geqslant 1$ and any $j\in\mathbb{N}_0$, where $\{\widetilde{Q}_j\}_{j\in\mb{N}_0}$ and $\{\ell_j\}_{j\in\mb{N}_0}$ are sequences of non-negative real numbers that will be determined iteratively. Clearly, thanks to \eqref{P11bis}, we see that \eqref{P14} holds when $j=0$ with $\widetilde{Q}_0=C_2$ and $\ell_0=0$. Using \eqref{P14} in \eqref{P12}, by an elementary integration, we have
\begin{align*}
\ml{D}_0(t)&\geqslant C_3\widetilde{Q}_j^2\epsilon_0^{2^{j+1}}\int_1^t(R+c_0\eta)^{-1}\left[\ln\left(\frac{R+c_0\eta}{R+c_0}\right)\right]^{2\ell_j}\mathrm{d}\eta\\
&\geqslant\frac{C_3\widetilde{Q}_j^2}{c_0(2\ell_j+1)}\epsilon_0^{2^{j+1}}\left[\ln\left(\frac{R+c_0t}{R+c_0}\right)\right]^{2\ell_j+1}
\end{align*}
for any $t\geqslant 1$, which is exactly \eqref{P14} for $j+1$ provided that
\begin{align*}
	\widetilde{Q}_{j+1}:=\frac{C_3}{c_0(2\ell_j+1)}\widetilde{Q}_j^2,\ \ \ell_{j+1}:=2\ell_j+1.
\end{align*}
Using recursively the previous relation for $\ell_j$, we obtain
\begin{align*}
\ell_j=2^j\ell_0+2^j-1=2^j-1,
\end{align*}
which implies  $\widetilde{Q}_j\geqslant C_3c_0^{-1}2^{-j}\widetilde{Q}_{j-1}^2$. Proceeding as in the previous subsection, we find
\begin{align*}
\ln\widetilde{Q}_j\geqslant 2^j\left(\ln\frac{C_3}{c_0}-2\ln 2+\ln\widetilde{Q}_0\right)+j\ln 2+2\ln 2-\ln\frac{C_3}{c_0}.
\end{align*}
Taking $j_1=j_1(n)\in\mb{N}$ to be the smallest non-negative integer such that $j_1\geqslant\ln (C_3/c_0) /\ln 2-2$, for any $j\geqslant j_1$, we obtain
\begin{align*}
	\ln\widetilde{Q}_j\geqslant 2^j\ln\left(\frac{C_2 C_3}{4c_0}\right)=:2^j\ln E_2
\end{align*}
for any $t\geqslant 1$. Combining the last inequality with \eqref{P14} and $\ell_j=2^j-1$, for $j\geqslant j_1$ we arrive at
\begin{align*}
	\ml{D}_0(t)&\geqslant \exp\left(2^j\ln E_2\right)\epsilon_0^{2^j}\left[\ln\left(\frac{R+c_0 t}{R+c_0}\right)\right]^{2^j-1}\notag\\
	&\geqslant\exp\left[2^j\ln\left(E_2\epsilon_0\ln\left(\frac{R+c_0 t}{R+c_0}\right)\right)\right]\left[\ln\left(\frac{R+c_0 t}{R+c_0}\right)\right]^{-1}.\notag
\end{align*} Hence, taking $t\geqslant \max\left\{1,\frac{R^2}{c_0^2}\right\}$ so that $\ln\left(\frac{R+c_0 t}{R+c_0}\right)\geqslant \frac{1}{2}\ln t$, for $j\geqslant j_1$ it results
\begin{align}
	\ml{D}_0(t)
	&\geqslant\exp\left[2^j\ln\left(\frac{E_2}{2}\epsilon_0\ln t\right)\right]\left[\ln\left(\frac{R+c_0 t}{R+c_0}\right)\right]^{-1}.\label{P17}
\end{align}
For $t>\exp(2/ (E_2\epsilon_0))$, we notice that the lower bound in \eqref{P17} is divergent as $j\to\infty$.  We fix $\epsilon_2=\epsilon_2(\psi_0,\psi_1,\psi_2,n,R,\tau,c_0,A,B)>0$ such that $\exp(2/ (E_2\epsilon_2))\geqslant\max\left\{1,\frac{R^2}{c_0^2}\right\}$.
Then, for any  $\epsilon_0\in(0,\epsilon_2]$ and any $t>\exp(2/( E_2\epsilon_0))$, it results 
\begin{align*}
	t\geqslant \max\left\{1,\frac{R^2}{c_0^2}\right\}\ \ \mbox{and}\  \ \ln\left(\frac{E_2}{2}\epsilon_0\ln t\right)>0
\end{align*}
and consequently,  letting $j\to\infty$, we find that  $\ml{D}_0(t)$ cannot be finite. Thus, we completed the proof of Theorem \ref{Thm-Blow-up} when $n=3$ and we obtained  the upper bound estimate $\ln T(\epsilon_0)\lesssim \epsilon_0^{-1}$ for lifespan of  $\psi$.

\section{Final remarks}\label{Sec-Final}
Let us summarize the results that we obtained in this paper in  the next table, which outlines the influence of the viscous dissipation in the nonlinear Cattaneo-type model \eqref{Eq-Cattaneo-Fractional}.

\begin{table}[http]
	\centering	
	\begin{tabular}{cccc}
		\toprule
		Nonlinear Cattaneo-type   & \multirow{2}{*}{Global solvability} & \multirow{2}{*}{Lifespan estimates} & \multirow{2}{*}{\shortstack{Large-time\\ profiles}}\\
		model \eqref{Eq-Cattaneo-Fractional} & & & \\
		\midrule
		\multirow{4}{*}{\shortstack{Viscous case\\ ($0< b\nu\ll 1$)}}  & \multirow{4}{*}{\shortstack{Global existence\\ of small data \\Sobolev solutions}}& \multirow{4}{*}{$T_{\epsilon_0}=\infty$} &\multirow{2}{*}{\shortstack{ Anomalous diffusion\\ if $\alpha\in[0,1/2)$}} \\
		&  & &  \\
				\cmidrule{4-4}
		  & & &\multirow{2}{*}{\shortstack{Diffusion waves\\ if $\alpha\in[1/2,1]$}}  \\
		  & & &\\
		\midrule
		\multirow{4}{*}{\shortstack{Inviscid case\\ ($b\nu=0$)}} & \multirow{4}{*}{\shortstack{Blow-up of the\\ energy solutions \\ in finite time}} & \multirow{4}{*}{$T_{\epsilon_0}\leqslant\begin{cases}
			C\epsilon_0^{-\frac{2}{3-n}}&\mbox{if}\ n=1,2,\\
			\exp(C\epsilon_0^{-1})&\mbox{if}\ n=3.
		\end{cases}$} &\multirow{4}{*}{\diagbox{}{} } \\
		 &  & &\\
		& & &\\
			& & &\\
		\bottomrule
	\end{tabular}
	\caption{The influence of the viscous dissipations}
	\label{Table_3}
\end{table}

\begin{remark}\label{Rem-Regularity}
In the diffusion wave case when $\alpha\in(1/2,1]$, comparing with the regularities of the solution \eqref{D2} for the linearized problem \eqref{Eq-Linear-Cattaneo-Fractional}, and those of the solution \eqref{D1} for the nonlinear problem \eqref{Eq-Cattaneo-Fractional}, we observe a loss of spatial regularity for $\psi_t$ and $\psi_{tt}$. This phenomenon is induced by some problem with the regularity of the nonlinear terms $\nabla\psi\cdot\nabla\psi_t$
in the Cattaneo-type model \eqref{Eq-Cattaneo-Fractional}. As we shown in Section \ref{Sub-GESES-DIFFUSION-WAVE}, by using the representations \eqref{Rep-non} and \eqref{Cn}, we need $\partial_t^{j+1}$-derivatives of the kernel to control $\partial_t^j$-derivatives of the nonlinear part. Thus, it seems interesting to study whether the regularity of the global (in time) solutions to the nonlinear viscous problem \eqref{Eq-Cattaneo-Fractional} can be improved to the one of \eqref{D2} when $\alpha\in(1/2,1]$.
\end{remark}

\begin{remark}\label{Rem_General}
	Thanks to the small viscosity $0<b\nu<2c_0$, the regularity assumptions on the initial data for the anomalous diffusion case $\alpha\in[0,1/2)$ and the diffusion wave case $\alpha=1/2$ are the same. Even when we do not require a small viscosity, Theorems \ref{Thm-01} and \ref{Thm-02}  still hold. However, the asymptotic profiles in the threshold case $\alpha=1/2$ change into some anomalous diffusion functions with singularity $\chi_{\intt}(\xi)|\xi|^{-1}$ as $b\nu>2c_0$; and the function $\ml{F}^{-1}_{\xi\to x}(\mathrm{e}^{-c|\xi|t})t\mb{B}_0$ as $b\nu=2c_0$ 
\end{remark}
\begin{remark}
By following the philosophy in the recent work \cite{Chen-Takeda=2023} and applying several tools from the Fourier analysis, we conjecture that the first\textcolor{green}{-}order profiles $\ml{G}_{k,j}\mb{B}_0$ are the optimal leading terms of the global (in time) solutions $\partial_t^j\psi$ to the nonlinear viscous Cattaneo-type model \eqref{Eq-Cattaneo-Fractional} with weighted $L^1$ data, where $k=1,2,3$ and $j=0,1,2$.
\end{remark}

\begin{remark}
For the inviscid Cattaneo-type model \eqref{Eq-Cattaneo-inviscid}, the question concerning whether or not  global (in time) solutions exist for higher dimensions (assuming the smallness of the Cauchy data), is still open when $n\geqslant 4$.
\end{remark}

\appendix
\section{Tools from the harmonic analysis}\label{Appendix}


In this appendix we collect the results from the harmonic analysis that we used throughout Section \ref{Section_GESDS} to estimate the nonlinear terms in homogeneous Sobolev spaces.

\begin{lemma}\label{fractionalgagliardonirenbergineq} (Fractional Gagliardo-Nirenberg inequality, \cite{Hajaiej-Molinet-Ozawa-Wang-2011})
	Let $p,p_0,p_1\in(1,\infty)$ and $\kappa\in[0,s)$ with $s\in(0,\infty)$. Then, for all $f\in L^{p_0}\cap \dot{H}^{s}_{p_1}$, the following inequality holds:
	\begin{align*}
	\|f\|_{\dot{H}^{\kappa}_{p}}\lesssim\|f\|_{L^{p_0}}^{1-\beta}\|f\|^{\beta}_{\dot{H}^{s}_{p_1}},
	\end{align*}
	where  $\beta=\left(\frac{1}{p_0}-\frac{1}{p}+\frac{\kappa}{n}\right)\big/\left(\frac{1}{p_0}-\frac{1}{p_1}+\frac{s}{n}\right)$ and $ \beta\in\left[\frac{\kappa}{s},1\right]$.
\end{lemma}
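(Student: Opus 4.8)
The plan is to reduce the inequality to dyadic (Littlewood--Paley) estimates and to let scaling dictate the exponent. First I would record the scaling heuristic that pins down $\beta$: writing $f_\lambda(x)=f(\lambda x)$ one has $\|f_\lambda\|_{\dot{H}^\kappa_p}=\lambda^{\kappa-n/p}\|f\|_{\dot{H}^\kappa_p}$, $\|f_\lambda\|_{L^{p_0}}=\lambda^{-n/p_0}\|f\|_{L^{p_0}}$ and $\|f_\lambda\|_{\dot{H}^s_{p_1}}=\lambda^{s-n/p_1}\|f\|_{\dot{H}^s_{p_1}}$, so scale invariance of the claimed bound forces $\kappa-\tfrac{n}{p}=(1-\beta)(-\tfrac{n}{p_0})+\beta(s-\tfrac{n}{p_1})$, which rearranges to exactly the stated value of $\beta$. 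This both fixes $\beta$ and shows no freedom is left. Next I would introduce a homogeneous Littlewood--Paley decomposition $\{\dot{\Delta}_j\}_{j\in\mathbb{Z}}$ (frequency localization to $|\xi|\approx 2^j$; not the Laplacian). Using the embedding $\dot{B}^\kappa_{p,1}\hookrightarrow\dot{H}^\kappa_p$, valid for $1<p<\infty$ since the $\ell^1$ Besov index embeds into the Triebel--Lizorkin scale $\dot{H}^\kappa_p=\dot{F}^\kappa_{p,2}$, I reduce matters to bounding $\|f\|_{\dot{H}^\kappa_p}\lesssim\sum_{j\in\mathbb{Z}}2^{j\kappa}\|\dot{\Delta}_j f\|_{L^p}$.

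For each block I would use Bernstein's inequalities to produce two competing bounds: a low-frequency-friendly one $\|\dot{\Delta}_j f\|_{L^p}\lesssim 2^{jn(1/p_0-1/p)}\|f\|_{L^{p_0}}$, and a high-frequency-friendly one $\|\dot{\Delta}_j f\|_{L^p}\lesssim 2^{j(n(1/p_1-1/p)-s)}\|f\|_{\dot{H}^s_{p_1}}$, the latter obtained by writing $\dot{\Delta}_j f\approx 2^{-js}|D|^s\dot{\Delta}_j f$ and using $\|\dot{\Delta}_j |D|^s f\|_{L^{p_1}}\lesssim\||D|^s f\|_{L^{p_1}}$. Splitting the sum at a threshold $j_0$, applying the first bound for $j\le j_0$ and the second for $j>j_0$, I obtain $\|f\|_{\dot{H}^\kappa_p}\lesssim 2^{j_0 a}\|f\|_{L^{p_0}}+2^{j_0 b}\|f\|_{\dot{H}^s_{p_1}}$ with $a=\kappa+n(1/p_0-1/p)$ and $b=\kappa-s+n(1/p_1-1/p)$. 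The two geometric series converge precisely because the admissibility $\beta\in[\kappa/s,1]$ is equivalent (by elementary algebra, assuming the natural sign $D:=1/p_0-1/p_1+s/n>0$ of the denominator of $\beta$) to $a>0$ and $b<0$; I would carry out this equivalence explicitly.

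It then remains to optimize over $j_0$ as a real parameter and round. Balancing the two terms at $2^{j_0(a-b)}\approx\|f\|_{\dot{H}^s_{p_1}}/\|f\|_{L^{p_0}}$ yields the bound $\|f\|_{L^{p_0}}^{-b/(a-b)}\|f\|_{\dot{H}^s_{p_1}}^{a/(a-b)}$, and since $a-b=nD$ and $a=nN$ where $N$ and $D$ are the numerator and denominator of $\beta$, the exponents are exactly $1-\beta$ and $\beta$. This matches the scaling prediction, which is a reassuring consistency check and completes the argument in the generic case.

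The main obstacle is that the two Bernstein estimates as written require $p_0\le p$ and $p_1\le p$, since Bernstein only raises integrability for free. These orderings cover every application in the paper, where one always has $p_0=p_1=2<p$, so the clean argument above suffices there. For the fully general statement one must either check that admissibility constrains the parameters so that a variant applies, or replace the crude ``Bernstein up'' step by a more robust device (real interpolation between the endpoint embeddings governed by $L^{p_0}$ and $\dot{H}^s_{p_1}$, or the square-function characterization of $\dot{H}^\kappa_p$). The genuine endpoints $\beta=1$ (a critical homogeneous Sobolev embedding $\dot{H}^s_{p_1}\hookrightarrow\dot{H}^\kappa_p$) and $\beta=\kappa/s$ degenerate one of the two geometric series and should be handled separately, the former via the $\dot{B}^\kappa_{p,1}$ refinement. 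Since the inequality is classical and attributed to \cite{Hajaiej-Molinet-Ozawa-Wang-2011}, I would ultimately cite their statement and use the decomposition above only to make its structure transparent.
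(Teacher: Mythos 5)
The paper does not prove this lemma at all: it is quoted verbatim from the cited reference \cite{Hajaiej-Molinet-Ozawa-Wang-2011} as a black-box tool in the appendix, so there is no in-paper argument to compare against. Your Littlewood--Paley/Bernstein sketch is the standard proof of this interpolation inequality and is essentially sound: the scaling computation correctly forces the stated $\beta$, the embedding $\dot{B}^{\kappa}_{p,1}\hookrightarrow \dot{H}^{\kappa}_{p}=\dot{F}^{\kappa}_{p,2}$ is valid for $1<p<\infty$, the two Bernstein bounds and the identities $a=nN$, $a-b=nD$ give exactly the exponents $1-\beta$ and $\beta$ after optimizing $j_0$, and you correctly identify that the crude version needs $p_0\leqslant p$ and $p_1\leqslant p$ --- which indeed covers every invocation in this paper, where $p_0=p_1=2$ and $p\in\{2,4,\infty\text{-adjacent}\}$. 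Two small points of precision: first, what your geometric series actually require is $a>0$ and $b<0$, which is equivalent to $0<\beta<1$, not to $\beta\in[\kappa/s,1]$; the lower bound $\beta\geqslant\kappa/s$ is a strictly stronger constraint (for $\kappa>0$) that belongs to the hypotheses of the lemma as a necessary admissibility condition rather than something your convergence argument uses, and $\beta=\kappa/s$ does not in general degenerate either series (only $\beta=0$, i.e.\ $\kappa=0$ with $p=p_0$, kills the low-frequency sum). Second, the degenerate case where one of the two norms vanishes should be dispatched before balancing $j_0$ (then $f=0$ modulo the usual conventions). With those caveats, and given that you sensibly defer the fully general exponent configuration to the cited source, the proposal is a correct and more informative substitute for the paper's bare citation.
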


\begin{lemma}\label{fractionleibnizrule} (Fractional Leibniz rule, \cite{Grafakos-Oh-2014})
	Let $s\in(0,\infty)$, $r\in[1,\infty]$, $p_1,p_2,q_1,q_2\in(1,\infty]$ satisfy $\frac{1}{r}=\frac{1}{p_1}+\frac{1}{p_2}=\frac{1}{q_1}+\frac{1}{q_2}$. Then, for all $f\in\dot{H}^{s}_{p_1}\cap L^{q_1}$ and $g\in\dot{H}^{s}_{q_2}\cap L^{q_2}$,
	the following inequality holds:
	\begin{align*}
	\|fg\|_{\dot{H}^{s}_{r}}\lesssim \|f\|_{\dot{H}^{s}_{p_1}}\|g\|_{L^{p_2}}+\|f\|_{L^{q_1}}\|g\|_{\dot{H}^{s}_{q_2}}.
	\end{align*}
\end{lemma}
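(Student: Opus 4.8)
The plan is to establish this Kato--Ponce type inequality through a Littlewood--Paley paraproduct decomposition, which reduces the estimate for $\||D|^s(fg)\|_{L^r}$ (recalling that $\|h\|_{\dot H^s_r}=\||D|^sh\|_{L^r}$) to three spectrally structured pieces. First I would fix a homogeneous dyadic partition of unity, writing $\Delta_j$ for the frequency projection onto the annulus $\{|\xi|\approx 2^j\}$ and $S_j=\sum_{k\leqslant j}\Delta_k$ for the associated low-frequency truncation. Bony's decomposition then splits the product as
\begin{align*}
fg=\sum_j(S_{j-2}f)(\Delta_j g)+\sum_j(\Delta_j f)(S_{j-2}g)+\sum_{|j-k|\leqslant1}(\Delta_j f)(\Delta_k g)=:T_fg+T_gf+R(f,g),
\end{align*}
where the two paraproducts $T_fg,T_gf$ carry genuine high-frequency localization, while the remainder $R(f,g)$ collects the high--high (diagonal) interactions.

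For the paraproduct $T_gf$ each summand $(S_{j-2}g)(\Delta_j f)$ has Fourier support in $\{|\xi|\approx2^j\}$, so the square-function characterization of $L^r$ gives
\begin{align*}
\||D|^sT_gf\|_{L^r}\lesssim\left\|\Big(\sum_j 2^{2js}|S_{j-2}g|^2|\Delta_jf|^2\Big)^{1/2}\right\|_{L^r}\lesssim\|Mg\|_{L^{p_2}}\left\|\Big(\sum_j2^{2js}|\Delta_jf|^2\Big)^{1/2}\right\|_{L^{p_1}},
\end{align*}
after pulling out $\sup_j|S_{j-2}g|\lesssim Mg$, the Hardy--Littlewood maximal function, and applying H\"older's inequality with $1/r=1/p_1+1/p_2$. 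The Fefferman--Stein inequality together with the Littlewood--Paley theorem then bound the right-hand side by $\|g\|_{L^{p_2}}\||D|^sf\|_{L^{p_1}}$, which is the first term of the claim. The symmetric argument applied to $T_fg$ produces $\|f\|_{L^{q_1}}\||D|^sg\|_{L^{q_2}}$.

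The delicate piece, and what I expect to be the main obstacle, is the remainder $R(f,g)$, because the diagonal products $(\Delta_jf)(\Delta_kg)$ are supported in a ball $\{|\xi|\lesssim2^j\}$ rather than an annulus, so $|D|^s$ cannot simply be read off as $2^{js}$. To recover summability I would decompose once more according to the \emph{output} frequency, writing $|D|^sR(f,g)=\sum_m\Delta_m|D|^s R(f,g)$ and observing that only the terms with $2^j\gtrsim2^m$ survive. On each such term $|D|^s\Delta_m$ contributes the factor $2^{ms}=2^{js}2^{-(j-m)s}$, and here the hypothesis $s>0$ is used decisively: the geometric weight $2^{-(j-m)s}$ is summable over $j\geqslant m-C$, so after estimating $\|\Delta_j f\|_{L^{p_1}}\|\widetilde{\Delta}_j g\|_{L^{p_2}}$ by H\"older and reinvoking the maximal and square-function machinery one arrives at $\||D|^sR(f,g)\|_{L^r}\lesssim\||D|^sf\|_{L^{p_1}}\|g\|_{L^{p_2}}$. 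Collecting the three bounds proves the inequality in the interior range $1<r<\infty$ with $1<p_1,p_2,q_1,q_2<\infty$.

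The genuine difficulty of the full statement lies in the endpoints permitted here, namely $r=1$ and the possibility $p_i=\infty$ or $q_i=\infty$, where the square-function characterization of $\dot H^s_r$ and the Fefferman--Stein inequality are no longer available. For these borderline exponents I would follow \cite{Grafakos-Oh-2014}: replace the $L^1$ norm by the Hardy space $H^1$ at the relevant step, use Coifman--Meyer bilinear multiplier bounds together with vector-valued (weighted) maximal estimates to absorb the $L^\infty$ factors, and check that the geometric summation in the remainder still closes under $s>0$. Since this endpoint refinement is precisely the content of \cite{Grafakos-Oh-2014}, in practice I would cite it for the limiting cases and retain the paraproduct computation above for the core range.
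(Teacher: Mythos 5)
The paper does not prove this lemma at all: it is quoted verbatim from \cite{Grafakos-Oh-2014} as an off-the-shelf tool in the appendix, so there is no in-paper argument to compare against. Your paraproduct sketch is the standard proof behind that citation and is essentially correct in the core range $1<r,p_i,q_i<\infty$: the Bony splitting, the annular support of the paraproduct pieces, the reduction to the square function plus the pointwise bound $\sup_j|S_{j-2}g|\lesssim Mg$, and the use of $s>0$ to sum the geometric tail in the high--high remainder are all the right ingredients. Two small remarks. First, once you have pulled $\sup_j|S_{j-2}g|\lesssim Mg$ out of the square sum and applied H\"older, the $g$-factor only needs the scalar Hardy--Littlewood maximal theorem; Fefferman--Stein (the vector-valued maximal inequality) is what you would invoke if you kept $M g$ inside the $\ell^2$ sum, so the attribution is slightly off though the estimate itself is fine. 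Second, you are right that the real content of the lemma as stated --- $r=1$ and the exponents equal to $\infty$, where the square-function characterization of $L^r$ and the maximal inequalities degenerate --- is exactly what \cite{Grafakos-Oh-2014} supplies, and deferring to it there is the honest and appropriate move; in that sense your write-up actually provides more justification than the paper does. (Incidentally, the hypothesis in the paper's statement appears to contain a typo: for the right-hand side to make sense one wants $g\in\dot{H}^{s}_{q_2}\cap L^{p_2}$ rather than $\dot{H}^{s}_{q_2}\cap L^{q_2}$; your argument implicitly uses the corrected version.)
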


\begin{lemma}\label{fractionembedd} (Fractional Sobolev embedding, \cite{Dabbicco-Ebert-Lucente-2017}) Let $0<2s^*<n<2s$. Then, for all $f\in\dot{H}^{s^*}\cap\dot{H}^s$, the following inequality holds:
	\begin{equation*}
	\|f\|_{L^{\infty}}\lesssim\|f\|_{\dot{H}^{s^*}}+\|f\|_{\dot{H}^s}.
	\end{equation*}
\end{lemma}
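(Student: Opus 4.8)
The plan is to pass to the Fourier side and reduce the $L^{\infty}$ bound to an integrability statement for $\widehat{f}$. First I would invoke the Fourier inversion formula, which yields the pointwise control $|f(x)|\leqslant(2\pi)^{-n}\|\widehat{f}\|_{L^1}$ for every $x\in\mb{R}^n$, so that $\|f\|_{L^{\infty}}\lesssim\|\widehat{f}\|_{L^1}$. It then suffices to dominate $\|\widehat{f}\|_{L^1}$ by the two homogeneous norms appearing on the right-hand side.

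Next I would split the frequency space as $\mb{R}^n=\{|\xi|\leqslant1\}\cup\{|\xi|>1\}$ and estimate each piece by the Cauchy--Schwarz inequality after factoring out a suitable power of $|\xi|$. On the low-frequency zone I write $|\widehat{f}(\xi)|=|\xi|^{-s^*}\big(|\xi|^{s^*}|\widehat{f}(\xi)|\big)$ and obtain
\[
\int_{|\xi|\leqslant1}|\widehat{f}(\xi)|\,\mathrm{d}\xi\leqslant\left(\int_{|\xi|\leqslant1}|\xi|^{-2s^*}\,\mathrm{d}\xi\right)^{1/2}\|f\|_{\dot{H}^{s^*}},
\]
while on the high-frequency zone I write $|\widehat{f}(\xi)|=|\xi|^{-s}\big(|\xi|^{s}|\widehat{f}(\xi)|\big)$ and obtain
\[
\int_{|\xi|>1}|\widehat{f}(\xi)|\,\mathrm{d}\xi\leqslant\left(\int_{|\xi|>1}|\xi|^{-2s}\,\mathrm{d}\xi\right)^{1/2}\|f\|_{\dot{H}^{s}}.
\]

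The crux is the finiteness of the two weight integrals, and this is exactly where the hypothesis $0<2s^*<n<2s$ enters. Passing to polar coordinates gives $\int_{|\xi|\leqslant1}|\xi|^{-2s^*}\,\mathrm{d}\xi\simeq\int_0^1r^{n-1-2s^*}\,\mathrm{d}r$, which converges precisely when $n-2s^*>0$; likewise $\int_{|\xi|>1}|\xi|^{-2s}\,\mathrm{d}\xi\simeq\int_1^{\infty}r^{n-1-2s}\,\mathrm{d}r$, which converges precisely when $n-2s<0$. Thus the chain $2s^*<n<2s$ renders both constants finite, and adding the two contributions yields $\|f\|_{L^{\infty}}\lesssim\|\widehat{f}\|_{L^1}\lesssim\|f\|_{\dot{H}^{s^*}}+\|f\|_{\dot{H}^{s}}$, as claimed. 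The only genuine subtlety is this endpoint integrability: were either strict inequality to degenerate into an equality, the corresponding radial integral would diverge (logarithmically at the borderline), so the strictness throughout $0<2s^*<n<2s$ is essential and no further difficulty arises.
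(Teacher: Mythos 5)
Your argument is correct and complete. The paper itself gives no proof of this lemma --- it is simply quoted from \cite{Dabbicco-Ebert-Lucente-2017} in the appendix --- so there is nothing internal to compare against; what you have written is the standard (and essentially the only natural) proof: Fourier inversion reduces the claim to $\widehat{f}\in L^1$, the frequency split at $|\xi|=1$ together with Cauchy--Schwarz converts each piece into a weight integral times the corresponding homogeneous norm, and the condition $0<2s^*<n<2s$ is precisely what makes the two radial integrals $\int_0^1 r^{n-1-2s^*}\,\mathrm{d}r$ and $\int_1^{\infty}r^{n-1-2s}\,\mathrm{d}r$ converge. The only cosmetic point you might add is that the finiteness of $\|\widehat{f}\|_{L^1}$ also shows a posteriori that $f$ coincides almost everywhere with a bounded continuous function, so the $L^{\infty}$ norm on the left is meaningful; this is routine and does not affect the validity of your proof.
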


\begin{lemma} [Lemma 3.6 in \cite{Palmieri-Reissig=2018}] 
 \label{Sickel lemma}  
Let $s\geqslant 0$ and $q\in (1,\infty)$.  Then we have the following equivalence of norms:
\begin{align*}
\| |D|^s \nabla u\|_{L^q} \approx \sum_{j=1}^n \| |D|^s \partial_{x_j} u\|_{L^q} \approx \| |D|^{s+1}  u\|_{L^q}
\end{align*}
under the assumption, that one of the previous norms exists.
\end{lemma}


\section*{Acknowledgments}

The work was supported by the National Natural Science Foundation of China (Grant 11971497), the Natural Science Foundation of Guangdong Province (Grants 2019B151502041, 2020B1515310004), and the Natural Science Foundation of the Department of Education of Guangdong Province (Grants 2018KZDXM048, 2019KZDXM036, 2020ZDZX3051, 2020TSZK005).   A. Palmieri is the member of the Gruppo Nazionale per l'Analisi Matematica, la Probabilit\`a e le loro Applicazioni (GNAMPA) of the Instituto Nazionale di Alta Matematica (INdAM) and has been supported by INdAM - GNAMPA Project 2021  \textquotedblleft 
 Equazioni dispersive
e dissipative: stime 
e profili asintotici\textquotedblright.

\end{document}